\newtheorem{theorem}{Theorem}[section]
\newtheorem{lemma}[theorem]{Lemma}
\newtheorem{conj}[theorem]{Conjecture}
\newtheorem{question}[theorem]{Question}
\theoremstyle{definition}
\newtheorem{example}[theorem]{Example}
\newtheorem*{ack}{Acknowledgement}
\newcommand\noproof{\hfill$\Box$}
\newcommand\ssequiv{\cite[Subsection 2.4]{BRsparse}}
\newcommand\webcite[1]{\hfil\penalty0\texttt{\def~{\~{}}#1}\hfill\hfill}
\newcommand\arxiv[1]{\webcite{arXiv:#1.}}
\newcommand\floor[1]{\lfloor #1\rfloor}
\newcommand\norm[1]{||#1||}
\newcommand\bb[1]{\bigl(#1\bigr)}
\newcommand\E{{\mathbb E{}}}
\newcommand\de{d_{\mathrm{sub}}}
\newcommand\detl{{\tilde d}_{\mathrm{sub}}}
\newcommand\dcn{d_{\mathrm{cn}}}
\newcommand\dloc{d_{\mathrm{loc}}}
\newcommand\sss{{\mathcal S}}
\newcommand\eps{\varepsilon}
\newcommand\isom{\cong}
\newcommand\emb{\mathrm{emb}}
\newcommand\ka{\kappa}
\newcommand\la{\lambda}
\newcommand\La{\Lambda}
\newcommand\Ga{\Gamma}
\newcommand\A{\mathcal{A}}
\newcommand\C{\mathcal{C}}
\newcommand\D{\mathcal{D}}
\newcommand\F{\mathcal{F}}
\newcommand\M{\mathcal{M}}
\newcommand\Mk{B_k}
\newcommand\T{\mathcal{T}}
\renewcommand\P{\mathcal{P}}
\newcommand\Tr{\mathcal{T}^{\mathrm r}}
\newcommand\Trr{\mathcal{T}^{\mathrm{rr}}}
\newcommand\Gr{\mathcal{G}^{\mathrm r}}
\newcommand\dc{d_\mathrm{cut}}
\newcommand\dedit{d_\mathrm{edit}}
\newcommand\dH{d_\mathrm{H}}
\newcommand\dM{d_\mathrm{match}}
\newcommand\dPM{d_\mathrm{p-m}}
\newcommand\dcnM{d_\mathrm{cn-m}}
\newcommand\dTV{d_\mathrm{TV}}
\newcommand\dcG{{\widehat d}_\mathrm{cut}}
\newcommand\dP{d_\mathrm{part}}
\newcommand\cn[1]{||#1||_{\mathrm{cut}}}
\newcommand\cc{{\mathrm c}}
\newcommand\rr{{\mathrm r}}
\newcommand\dd{\,d}
\newcommand\RR{{\mathbb R}}
\newcommand\R{{\mathbb R}}
\newcommand\iid{i.i.d.}
\renewcommand\Pr{\mathbb{P}}
\newcommand\pto{\overset{\mathrm{p}}{\to}}
\newcommand\tpi{{\widetilde\pi}}
\newcommand\bp{{\mathfrak X}}
\newcommand\kar{{\ka_\mathrm{r}}}
\newcommand\kac{{\ka_\mathrm{c}}}
\newcommand\Po{{\mathrm{Po}}}
\newcommand\mut{{\widetilde\mu}}
\newcommand\vv{{\bf v}}
\newcommand\lamax{{\la_{\mathrm{max}}}}
\newcommand\ts{{\tilde s}}
\newcommand\ttt{{\tilde t}}
\newcommand\diff{{\Delta}}
\newcommand\op{o_{\mathrm{p}}}
\newcommand\Op{O_{\mathrm{p}}}
\begin{document}
\title{Sparse graphs: metrics and random models}
\date{December 14, 2008; revised 07 February 2010}

\author{B\'ela Bollob\'as%
\thanks{Department of Pure Mathematics and Mathematical Statistics,
Wilberforce Road, Cambridge CB3 0WB, UK and
Department of Mathematical Sciences, University of Memphis, Memphis TN 38152, USA.
E-mail: {\tt b.bollobas@dpmms.cam.ac.uk}.}
\thanks{Research supported in part by NSF grants DMS-0906634,
 CNS-0721983 and CCF-0728928,
and ARO grant W911NF-06-1-0076}
\and Oliver Riordan%
\thanks{Mathematical Institute, University of Oxford, 24--29 St Giles', Oxford OX1 3LB, UK.
E-mail: {\tt riordan@maths.ox.ac.uk}.}
}
\maketitle

\begin{abstract}
Recently, Bollob\'as, Janson and Riordan introduced
a family of random graph models producing inhomogeneous
graphs with $n$ vertices
and $\Theta(n)$ edges whose distribution is characterized
by a kernel, i.e., a symmetric measurable function
$\ka:[0,1]^2 \to [0,\infty)$. To understand these models,
we should like to know when different kernels $\ka$ give
rise to `similar' graphs, and, given a real-world network,
how `similar' is it to a typical graph $G(n,\ka)$
derived from a given kernel $\ka$.

The analogous questions for dense graphs, with $\Theta(n^2)$ edges,
are answered by recent results of Borgs, Chayes, Lov\'asz, S\'os, Szegedy
and Vesztergombi, who showed that several natural metrics on graphs are
equivalent, and moreover that any sequence of graphs converges in each
metric to a graphon, i.e., a kernel taking values in $[0,1]$.

Possible generalizations of these results to graphs with $o(n^2)$
but $\omega(n)$ edges are discussed in a companion paper~\cite{BRsparse};
here we focus only on graphs with $\Theta(n)$ edges,
which turn out to be much harder to handle. Many new phenomena occur,
and there are a host of plausible metrics to consider; many of these
metrics suggest new random graph models, and vice versa.

\end{abstract}

\section{Introduction}\label{sec_es}
In a series of papers, Borgs, Chayes, Lov\'asz, S\'os, Szegedy and
Vesztergombi (see
\cite{BCLSV:homcount,BCLSSV:stoc,LSz1,LSgenQT,BCLSV:1,BCLSV:2} and the
references therein) introduced several natural metrics for
graphs, and showed that they are equivalent, in that if $(G_n)$
is a sequence of graphs with $|G_n|\to\infty$, then if $(G_n)$
is Cauchy with respect to one of these metrics then it is
Cauchy with respect to all of them. Moreover, there is a natural
completion of the space of graphs with respect to any of
these metrics, consisting of (equivalence classes of)
{\em graphons}, i.e., symmetric measurable functions
$\ka:[0,1]^2\to [0,1]$.
Throughout this paper we assume without loss of generality that $G_n$
has $n$ vertices; we do not require $G_n$ to be defined for all $n$,
but only for a sequence $n_i\to\infty$.  While the results just mentioned
apply to all sequences $(G_n)$,
they are meaningful only for {\em dense} graphs, where $e(G_n)=\Theta(n^2)$.
More precisely, any sequence with $e(G_n)=o(n^2)$ converges
to the zero graphon.

A different connection between graphs and objects related
to graphons arises in the work of Bollob\'as, Janson and Riordan~\cite{BJR}.
Throughout this paper, by a {\em kernel} $\ka$ we shall mean a symmetric integrable
function $\ka:[0,1]^2\to [0,\infty)$; note that
graphons are a special case of kernels. Roughly speaking, in~\cite{BJR}
an arbitrary kernel $\ka$ was used to define a sparse inhomogeneous
random graph $G(n,\ka)=G_{1/n}(n,\ka)$, although the details
are rather involved.

In~\cite{BRsparse} we extended the definitions of three of the
metrics mentioned above, the cut metric $\dc$, the count (or subgraph) metric $\de$,
and the partition metric $\dP$, to sparse graphs. In each
case one fixes a normalizing function $p=p(n)$ and adapts
the definition of the metric to graphs with $e(G_n)=\Theta(p n^2)$;
for the details see the definitions in the relevant sections here.
In addition to discussing the relationships between the different metrics,
we also discussed the close connection between metrics
and random graph models, concentrating on the case
where $p(n)$ is chosen so that $np\to\infty$. Here we
shall continue this investigation, but now considering
the case $p=1/n$.

When studying, for example, the random graph $G(n,p)$, there are
many possibilities for $p$ as a function of $n$; which is
most natural depends on what kind of properties one is interested
in. Nevertheless, there are two canonical ranges of particular interest:
the {\em dense} case, $p=\Theta(1)$, and the {\em (extremely) sparse} case,
$p=\Theta(1/n)$, the minimum sensible density.
Here we are not only
studying random graphs, but it is still true that the most natural
special cases are the densest graphs, those with $\Theta(n^2)$
edges, studied by Lov\'asz and Szegedy~\cite{LSz1} and Borgs,
Chayes, Lov\'asz, S\'os and Vesztergombi~\cite{BCLSV:1,BCLSV:2}, for
example, and the sparsest graphs, those with $\Theta(n)$ edges, as
studied by Bollob\'as, Janson and Riordan~\cite{BJR}.
Here we consider the second range, taking $p=p(n)=1/n$
as our normalizing density.

One might expect that graphs with $\Theta(n)$ edges are somehow simpler
than denser graphs, but in fact the reverse is often the case,
particularly for the random graph $G(n,p)$. As a trivial example,
note that there is significant variation in the vertex degrees in $G(n,c/n)$,
while the degrees in $G(n,p)$ are concentrated around their mean
if $np\to\infty$. For this reason, we expect graphs with $\Theta(n)$
edges to be much harder to work with in the present context, which
turns out to be the case. Indeed, as we shall see, hardly
any of the results in~\cite{BRsparse} apply to such graphs.

One advantage of the extremely sparse case is that there is a unique
natural normalization: except where explicitly indicated
otherwise, in this paper we fix $p=1/n$ as our normalizing function.
We shall discuss several metrics in turn, starting with the cut metric.
Before doing so, let us recall a few definitions from (for example)~\cite{BRsparse}.

Throughout this paper, by a {\em kernel} we mean an integrable function $\ka:[0,1]^2\to[0,\infty)$
with $\ka(x,y)=\ka(y,x)$ for all $x$, $y$.
A {\em rearrangement} of a kernel $\ka$ is any kernel $\ka^{(\tau)}$ defined by
$\ka^{(\tau)}(x,y)=\ka(\tau(x),\tau(y))$, where $\tau:[0,1]\to[0,1]$ is a measure-preserving
bijection. We write $\ka\approx \ka'$ if there is a rearrangement $\ka^{(\tau)}$
of $\ka$ with $\ka'=\ka^{(\tau)}$ a.e.

A kernel $\ka$ is of {\em finite type} if there is a finite partition $(A_1,\ldots,A_n)$
of $[0,1]$ such that $\ka$ is constant on each set $A_i\times A_j$.
Given a graph $G_n$ with $n$ vertices and a normalizing function $p=p(n)$,
we write $\ka_{G_n}$ for the finite-type kernel associated to $G_n$, defined by
partitioning $[0,1]$ into $n$ intervals $I_i$ of length $1/n$ and setting
$\ka_{G_n}$ equal to $1/p$ on $I_i\times I_j$ if $ij\in E(G_n)$, and to equal
to $0$ otherwise. Note that the definition of $\ka_{G_n}$ depends
on our normalizing function $p=p(n)$.

Given subsets $U$, $W$ of $V(G_n)$, we write $e(U,W)=e_{G_n}(U,W)$ for the number
of edges of $G$ from $U$ to $W$, i.e., the number of ordered pairs
$(u,w)$ with $u\in U$, $w\in W$ and $uw\in E(G_n)$. Suppressing the dependence
on $G_n$, we write
\begin{equation}\label{dpdef}
 d_p(U,W) = \frac{e_{G_n}(U,W)}{p|U||W|}
\end{equation}
for the normalized density of edges from $U$ to $W$ in $G_n$.

As in~\cite{BRsparse},
given a kernel $\ka$ and a normalizing function $p=p(n)$,
we write
$G_p(n,\ka)$ for the random graph defined by choosing vertex types
$x_1,\ldots,x_n$ independently and uniformly from $[0,1]$,
and, given these types, joining each pair $\{i,j\}$ of vertices
with probability $\min\{p\ka(x_i,x_j),1\}$, independently
of all other pairs. When $p=1/n$ this is a special
case of the sparse inhomogeneous model of Bollob\'as,
Janson and Riordan~\cite{BJR}; in~\cite{BJR} the sequence $x_1,\ldots,x_n$
is not assumed to be \iid, so the model there is much more general.
On the other hand, in~\cite{BJR} there are certain technical
assumptions, including that $\ka$ is continuous almost everywhere. These
assumptions are not needed here, since the \iid\ sequence
case is always well behaved; see the discussion in~\cite{BJRclust} or~\cite{BJRcutsub}.
When $p=1$ and $\ka$ is bounded by $1$, then $G_p(n,\ka)$
is what is called a {\em $\ka$-random graph}
by Lov\'asz and Szegedy~\cite{LSz1}.

Often in what follows we consider sequences $(G_n)$
of random graphs, i.e., sequences of probability
distributions on $n$-vertex graphs. In general, there
is no canonical coupling between these distributions
for different $n$, so formally we should only consider
convergence in probability. However, in many cases
the error bounds one obtains are strong enough to give
almost sure convergence for any coupling, and one
can in any case ensure almost sure convergence by passing
to a suitable subsequence. Since the relevant
`in probability' notions of (for example) Cauchy sequences
are perhaps unfamiliar and distracting, we shall often
implicitly fix a coupling and consider almost sure convergence instead.

As usual, when discussing random graphs we say that a sequence
of events $E_n$ holds {\em with high probability}, or {\em whp},
if $\Pr(E_n)\to 1$ as $n\to\infty$. We write $X_n\pto c$ to denote
convergence in probability. If $(X_n)$ is a sequence of random
variables and $f(n)$ a function, then $X_n=\op(f(n))$ means $X_n/f(n)\pto 0$.

\section{The cut metric and Szemer\'edi's Lemma}

Let us briefly recall the definitions of the cut norm of Frieze and
Kannan~\cite{FKquick}, and the cut metric, defined for kernels and
dense graphs by Borgs, Chayes, Lov\'asz, S\'os and
Vesztergombi~\cite{BCLSV:1}, and adapted to sparse graphs
in~\cite{BRsparse}.

Given an integrable function $\ka:[0,1]^2\to \R$, its {\em cut norm} is
\begin{equation}\label{cndefST}
 \cn{\ka} = \sup_{S,T\subset [0,1]} \left|\int_{S\times T} \ka(x,y) \dd x\dd y\right|,
\end{equation}
where the supremum is over all pairs of measurable subsets of $[0,1]$.
The {\em cut metric} is defined for kernels by
\begin{equation*} 
 \dc(\ka_1,\ka_2) = \inf_{\ka_2'\approx \ka_2} \cn{\ka_1-\ka_2'},
\end{equation*}
where the infimum is over all rearrangements of $\ka_2$.
The cut metric is extended to graphs by mapping a graph $G_n$ to the corresponding
finite-type kernel $\ka_{G_n}$. Note that this mapping
depends on the normalizing function $p=p(n)$, so when applying the cut metric
to graphs we should more properly speak of the {\em $p$-cut metric}.
However, all our metrics will depend on the normalizing function $p$,
so most of the time we shall not indicate this dependence.

In the dense and intermediate ranges, one of the key results
used in the study of the cut metric is some form of Szemer\'edi's
Lemma~\cite{Szem}. In the extremely sparse setting, there is no
way to apply Szemer\'edi's Lemma: the `bounded density' assumption
considered in~\cite[Section 4]{BRsparse} can only be satisfied if $e(G_n)=o(n)$,
and there is no reasonable way to define an $(\eps,p)$-regular partition
so that such a thing exists at all!
Correspondingly, many of the nice properties of the cut metric
fail when $p=1/n$, as we shall now see.

Given a graph $G$ and a kernel $\ka$, let $\dc(G,\ka)=\dc(\ka_G,\ka)$.
As shown in~\cite{BRsparse}, when $np\to\infty$
then, under suitable mild assumptions, any sequence $(G_n)$
had a subsequence converging to some (bounded) kernel. Here
such convergence is impossible, except in the trivial
case where $\ka=0$ almost everywhere (in which case
$\dc(G_n,\ka)\to 0$ simply says that $e(G_n)=o(n)$).
This is easy to see for bounded kernels (using~\cite[Lemma~4.2]{BRsparse}),
but in fact holds for arbitrary kernels.

\begin{theorem}
Set $p=1/n$, let $\ka$ be a symmetric measurable function on $[0,1]^2$ with
$0<\int\ka<\infty$, and let $(G_n)$ be a sequence of graphs with $|G_n|=n$.
Then $\dc(G_n,\ka)$ is bounded away from zero.
\end{theorem}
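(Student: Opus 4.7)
The plan has three parts: a reduction, a truncation, and a combinatorial step.

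\emph{Reduction.} Taking $S=T=[0,1]$ in the definition of $\cn{\,\cdot\,}$ and noting that every measure-preserving $\tau$ preserves $\int\ka$, we have $\cn{\ka_{G_n}-\ka^{(\tau)}}\ge |2e(G_n)/n-\int\ka|$ uniformly in $\tau$. If $2e(G_n)/n$ fails to converge to $\alpha:=\int\ka$, this gives the conclusion along a subsequence; otherwise we may assume $2e(G_n)/n\to\alpha$.

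\emph{Truncation.} Given $\eps>0$, pick $M$ with $\int(\ka-M)^+<\eps$; this is possible since $\ka\in L^1$. Rearrangements preserve the distribution of values, so for any measurable $S,T\subseteq[0,1]$ and any rearrangement $\ka^{(\tau)}$,
\[
\int_{S\times T}\ka^{(\tau)} \le M|S||T|+\int(\ka-M)^+ < M|S||T|+\eps.
\]

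\emph{Combinatorial step.} Choose $S=\bigcup_{v\in A}I_v$ and $T=\bigcup_{v\in B}I_v$ for vertex subsets $A,B\subseteq V(G_n)$. The task reduces to producing $A,B$ with
\[
e_{G_n}(A,B)/n - M|A||B|/n^2 \ge c
\]
for some $c=c(\alpha,M)>0$ independent of $n$, since then the cut norm exceeds $c-\eps$ uniformly in $\tau$. I would argue by case analysis on the degree distribution of $G_n$: if $G_n$ has enough vertices of degree exceeding a threshold of order $M$, take $A$ to be the high-degree set and $B=V(G_n)$, so that $e_{G_n}(A,V)=\sum_{v\in A}\deg(v)$ is large while $|A|$ is controlled by the degree-sum bound; otherwise most edges of $G_n$ lie in its bounded-degree subgraph, which contains a matching of size $\Omega(n/M)$, whose two sides furnish $A$ and $B$.

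\emph{Main obstacle.} The combinatorial bound typically scales like $c\sim 1/M$, while the truncation error is at most $\int(\ka-M)^+$. For bounded $\ka$ no truncation is needed and one reduces directly to \cite[Lemma~4.2]{BRsparse}; for $\ka\in L^p$ with $p>1$ a large choice of $M$ suffices. The delicate case is $\ka\in L^1$ with insufficient higher integrability, where $M\int(\ka-M)^+$ need not tend to zero, and a finer dyadic decomposition of $\ka$ into bounded pieces, or a choice of $M=M(n)$ growing slowly with $n$, is likely required to make the combinatorial lower bound dominate the truncation error.
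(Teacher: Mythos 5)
Your reduction step and the idea of truncating the kernel both appear in the paper's proof, and your self-diagnosis in the ``main obstacle'' paragraph is exactly right: as written, the combinatorial step does not close, and the proposal is therefore incomplete. The difficulty is real, not a matter of tuning constants. If you test with a fixed rectangle $S\times T$ corresponding to vertex sets $A,B$, the kernel side picks up a contribution of order $M|A||B|/n^2$ from the \emph{bulk} of $\ka$ over a region of measure $\Theta(1)$, and you are forced to drive $M\to\infty$ to control the tail $\int(\ka-M)^+$. For a merely $L^1$ kernel these two demands are incompatible, as you observe. Neither of your two sub-cases (high-degree set, or matching inside the bounded-degree part) has a fixed $c=c(\alpha,M)$ that survives $M\to\infty$; try a $2$-regular $G_n$ against a kernel with $\int\ka>0$ but a heavy unbounded tail.

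The paper closes the gap with a different combinatorial step that you are missing: a randomized complementary bipartition of a linear-size matching. First one shows (using the very convergence $\cn{\ka_{G_n}-\ka^{(\tau_n)}}\to 0$ being assumed for contradiction) that $G_n$ must contain a matching $\{u_iw_i\}_{i\le r}$ with $r\ge cn$; otherwise all but $o(n)$ vertices span no edges, so the kernel would have to integrate to $o(1)$ over a set of measure $1-o(1)$, contradicting $\int\ka>0$. Then take $U=\{u_i\}$, $W=\{w_i\}$, pick $U'\subset U$ by independent fair coin flips, and set $W'=\{w_i:u_i\notin U'\}$. The matching edges never cross from $U'$ to $W'$, so
\[
\E\,e_{G_n}(U',W')=\tfrac14 e_{G_n}(U,W)-\tfrac14 r,
\]
a deficit of $r/4=\Theta(n)$ relative to the ``uncorrelated'' quarter. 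On the kernel side the analogous identity reads
\[
\E\int_{U'\times W'}\ka'=\tfrac14\int_{U\times W}\ka'-\tfrac14\int_S\ka',
\]
where $S$ is the union of the $r$ tiny $(1/n)\times(1/n)$ squares along the matching, so $\mu(S)\le 1/n$. This is where truncation is actually used, and only here: it has to beat a region of measure $O(1/n)$, not a $\Theta(1)$-measure rectangle, so $\int_S\ka'\le C/n+\int\ka\,1_{\{\ka>C\}}$ can be made $\le c/2$ with $C$ fixed \emph{independently of} $M$-style tuning. Comparing the two expectations and using $\cn{\ka_{G_n}-\ka'}\le c/25$ to bound \emph{every} difference (hence also the difference of expectations) yields the contradiction $c/16 < c/25$. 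The lesson is that the right test sets are random and tied to a matching, not a single bulk rectangle; this decouples the combinatorial gain from the truncation level and handles general integrable $\ka$.
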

\begin{proof}
Suppose not; then, passing to a subsequence, we have $\dc(G_n,\ka)\to 0$.
Hence there are rearrangements $\ka^{(\tau_n)}$ of $\ka$ such that 
\begin{equation}\label{cnt}
 \cn{\ka_{G_n}-\ka^{(\tau_n)}}\to 0.
\end{equation}
Note that
\[
\dc(G_n,\ka)\ge \left|\int\ka_{G_n}-\int\ka\right|= \left|\frac{2e(G_n)}{n}-\int\ka\right|,
\]
so $e(G_n)/n\to \int\ka/2$. In particular, $G_n$ has $\Theta(n)$ edges.

Let $M_n$ be a largest matching in $G_n$.
We claim that there is a constant $c>0$ such that, for $n$ large enough,
$M_n$ contains at least $cn$ edges. Otherwise, passing to a subsequence,
we may assume that $|M_n|/n\to 0$. Writing $A_n$ for the vertex set of $M_n$,
and $B_n$ for its complement, we have $e(B_n,B_n)=0$. 
Let $X_n$ be the subset
of $[0,1]$ corresponding to $B_n$ under the rearrangement $\tau_n$.
Then, from \eqref{cnt}, $\int_{X_n\times X_n}\ka\to 0$.
Writing $\mu$ for Lebesgue measure, we have $\mu(X_n)=|B_n|/n\to 1$,
so from basic properties
of integration it follows that $\int_{X_n\times X_n}\ka \to \int_{[0,1]^2}\ka$,
which is positive by assumption. This contradiction proves the claim.

Fix $c>0$ for which the claim above holds. Since $\ka$ is integrable,
we have $\int \ka 1_{\{\ka>C\}}\to 0$ as $C\to\infty$,
where $1_{\{\ka>C\}}: [0,1]^2\to \{0,1\}$
is the indicator function of the event that $\ka(x,y)>C$. In particular,
there is a $C<\infty$ with $\int \ka 1_{\{\ka>C\}}\le c/4$.
Fix an $n$ with $n>4C/c$, noting
that if $S\subset [0,1]^2$ satisfies $\mu(S)\le 1/n$, then
\begin{equation}\label{smsm}
 \int_S \ka \le C\mu(S) + \int \ka 1_{\{\ka>C\}} \le C/n+c/4 \le c/2.
\end{equation}
Choosing $n$ large enough, we may assume from \eqref{cnt} that
there is a $\ka'\approx\ka$ with
\begin{equation}\label{cnc}
 \cn{\ka_{G_n}-\ka'}\le c/25.
\end{equation}

Let $\{u_1w_1,\ldots,u_rw_r\}$ be a matching in $G_n$ with $r\ge cn$; such
a matching exists by our claim. Let $U=\{u_i\}$ and $W=\{w_i\}$.
Identifying subsets of $V(G)$ with subsets of $[0,1]$ in the natural way,
from \eqref{cnc} we have
\[
 \left| \int_{U\times W}\ka'  - \frac{e_{G_n}(U,W)}{n} \right| \le c/25.
\]
Let $U'$ be a random subset of $U$ obtained by selecting each vertex
independently with probability $1/2$, and let $W'$ be the complementary
subset of $W$, defined by $W'=\{w_i: u_i\notin U_i\}$.
The edges of our matching never appear as edges from $U'$ to $W'$.
On the other hand, any other edge $u_iw_j$, $i\ne j$,
from $U$ to $W$ has probability $1/4$ of appearing. Hence,
\[
 \E\bb{e_{G_n}(U',W')} = e_{G_n}(U,W)/4 - r/4.
\]
Similarly, writing $S\subset [0,1]^2$ for the union of the $r$
$1/n$-by-$1/n$ squares corresponding to the edges $u_iw_i$,
we have
\[
 \E\left( \int_{U'\times W'} \ka' \right) = \frac{1}{4}\int_{U\times W}\ka'
  - \frac{1}{4}\int_S \ka'.
\]
Combining the last three displayed equations using the triangle
inequality, and noting that $\mu(S)=r/n^2\le 1/n$, it follows that
\begin{eqnarray*}
 \left| \E\left( \int_{U'\times W'} \ka' \right)
     -\frac{1}{n}\E\bb{e_{G_n}(U',W')}\right|
 &\ge& \frac{r}{4n}-\frac{1}{4}\int_S \ka' - c/100 \\
 &\ge& c/4-c/8-c/100 > c/16,
\end{eqnarray*}
using \eqref{smsm}.
On the other hand, from \eqref{cnc},
\[
 \left |\int_{U'\times W'} \ka' - \frac{e_{G_n}(U',W')}{n} \right| \le c/25
\]
always holds, which implies a corresponding upper bound on the difference
of the expectations. Since $c/25<c/16$, we obtain a contradiction, completing the
proof.
\end{proof}

The argument above in fact shows much more.
\begin{theorem}\label{noCauchy}
With $p=1/n$, a sequence $(G_n)$ of graphs with $|G_n|=n$
is Cauchy with respect to $\dc$ if and only if $e(G_n)=o(n)$.
\end{theorem}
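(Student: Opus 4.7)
The plan is to prove the two implications separately; the harder direction reuses the argument of the preceding theorem, with a suitable $\ka_{G_{m_0}}$ playing the role of the fixed kernel. The easy direction $e(G_n) = o(n) \Rightarrow$ Cauchy is immediate from $\cn{\ka_{G_n}} \le \|\ka_{G_n}\|_{L^1} = 2e(G_n)/n \to 0$, which gives $\dc(G_n,0) \to 0$, and Cauchyness then follows from the triangle inequality.

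For the converse, suppose $(G_n)$ is Cauchy with $e(G_n)/n \not\to 0$. Taking $S=T=[0,1]$ in the cut norm shows that $\int\ka_{G_n} = 2e(G_n)/n$ is Cauchy in $\R$, so after passing to a subsequence $e(G_n)/n \to c$ for some $c > 0$. I would then split on the size $r_n$ of a maximum matching of $G_n$. In \textbf{Case A}, $r_n \ge c'n$ along a subsequence for some fixed $c' > 0$: set $\eps = c'/25$, let $N = N(\eps)$ from the Cauchy property, and fix any $m_0 \ge N$ with $e(G_{m_0}) > 0$; write $\ka = \ka_{G_{m_0}}$. For $n \ge N$ in the subsequence, $\dc(G_n,\ka) \le c'/25$, the matching of size $\ge c'n$ is supplied by hypothesis, and $\ka$ is bounded by $m_0$ so $\int\ka\cdot 1_{\{\ka > m_0\}} = 0$. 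The randomised-partition argument of the preceding proof, run with $C = m_0$, $c := c'$ and $n > 4m_0/c'$, then yields the contradiction $c'/16 \le c'/25$ exactly as before.

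In \textbf{Case B}, $r_n/n \to 0$ along a subsequence. Let $B_n$ be the complement of a maximum matching in $G_n$; then $|B_n|/n \to 1$ and $e_{G_n}(B_n,B_n) = 0$, so $\int_{B_n \times B_n}\ka_{G_n} = 0$ when $B_n$ is identified with the corresponding subset of $[0,1]$. Fix $\eps = c/2$, choose $m_0 \ge N(\eps)$ with $\int\ka_{G_{m_0}} \ge c$, and for $n$ in the subsequence pick a rearrangement $\tau_n$ realising $\cn{\ka_{G_n} - \ka_{G_{m_0}}^{(\tau_n)}} \le \eps$. Evaluating this cut norm on the rectangle $B_n \times B_n$ and changing variables by $\tau_n$ gives $\int_{X_n \times X_n}\ka_{G_{m_0}} \le c/2$, where $X_n = \tau_n(B_n)$. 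But $\mu(X_n) \to 1$ and $\ka_{G_{m_0}}$ is a fixed integrable function, so absolute continuity of the integral forces $\int_{X_n \times X_n}\ka_{G_{m_0}} \to \int\ka_{G_{m_0}} \ge c$, contradicting the previous bound.

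The main obstacle is the uniformity question in Case A. Because $\ka_{G_{m_0}}$ takes values as large as $m_0$, the constant $C$ in \eqref{smsm} must be taken $\ge m_0$, and a naive application of the preceding theorem would produce a lower bound $\delta(\ka_{G_{m_0}})$ that could in principle shrink as $m_0 \to \infty$, defeating the Cauchy hypothesis. The resolution is to read off the matching fraction $c'$ from $(G_n)$ itself rather than derive it from the kernel via the "matching claim" step; then $c'$ is fixed once and for all, the numerical gap $c'/25 < c'/16$ does not degrade, and $m_0$ enters the bound only through the harmless condition $n > 4m_0/c'$, which is met by taking $n$ large enough in the subsequence.
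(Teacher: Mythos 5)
Your proof is correct and follows essentially the same case-split on maximum-matching size as the paper, reusing the randomised-partition argument of the preceding theorem in the $\Theta(n)$-matching case and an absolute-continuity argument in the $o(n)$-matching case; the paper handles the latter slightly more directly, observing $\liminf_n\dc(\ka_{G_n},\ka)\ge\int\ka$ for any fixed $\ka$ and then taking $\ka=\ka_{G_m}$ to conclude $\int\ka_{G_m}\to 0$ straight from the Cauchy hypothesis, without first normalising $e(G_n)/n$ or privileging one $m_0$. Your closing remark on the uniformity worry in Case A is exactly the right observation, and the paper's phrasing ``if there is some $c>0$ such that infinitely many $G_n$ contain a matching of size at least $cn$'' implicitly makes the same move of fixing the matching fraction from the sequence rather than from each approximating kernel $\ka_{G_m}$.
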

\begin{proof}
If $e(G_n)=o(n)$ then $(G_n)$ is trivially Cauchy, so we may
assume that $(G_n)$ is Cauchy.

On the one hand, if there is some $c>0$ such that infinitely many
$G_n$ contain a matching of size at least $cn$ then,
passing to a subsequence, we may assume that all $G_n$ do.
The argument above then
shows that for any kernel $\ka$, 
if $n$ is large enough then $\dc(\ka_{G_n},\ka)>c/25$. Applying
this with $\ka=\ka_{G_m}$ shows that $(G_n)$ cannot be
Cauchy.

On the other hand, if the largest matching in $G_n$
has size $o(n)$, then $G_n$ contains $n-o(n)$ vertices
spanning no edges, which implies that $\liminf \dc(\ka,G_n)\ge \int \ka$
for any fixed kernel $\ka$. Taking $\ka=\ka_{G_m}$, since $(\ka_{G_n})$
is Cauchy it follows that $\int\ka_{G_m}\to 0$ as $m\to\infty$,
i.e., that $e(G_n)=o(n)$.
\end{proof}

Theorem~\ref{noCauchy} shows that one cannot hope
to extend the results of Borgs, Chayes, Lov\'asz, S\'os and
Vesztergombi~\cite{BCLSV:1,BCLSV:2} for the dense
version of $\dc$, or those
of~\cite{BRsparse} for the sparse version with $np\to\infty$,
to the present extremely sparse case. It may still make sense, however, to use
$\dc$ as a measure of the similarity of two graphs $G_1$, $G_2$ with the same
number $n$ of vertices. For this purpose, as
in the denser context, there is a more natural
metric $\dcG(G_1,G_2)$, defined as the minimum over graphs $G_2'$
with $V(G_2')=V(G_1)$ and $G_2'$ isomorphic to $G_1$ of
the maximum over $S,T\subset V(G_1)$ of $|e_{G_1}(S,T)-e_{G_2'}(S,T)|/n$.
(This corresponds to only allowing rearrangements of $[0,1]$
that map vertices, i.e., intervals $((i-1)/n,i/n]$, into vertices.)
In the extremely sparse case, it is not so clear
what it means for $\dcG(G_n,G_n')$ to tend to zero, if $G_n$, $G_n'$
are graphs with $n$ vertices. For example, consider
the following concrete question:

Fix $c>0$, and let $G_n$ and $G_n'$ be independent instances of the
Erd\H os--R\'enyi random graph $G(n,c/n)$. Does
the expected value of $\dcG(G_n,G_n')$ tend to $0$ as $n\to\infty$?

If we consider graphs that are even
a little denser, i.e., $G(n,p)$ with $np\to\infty$, then the 
answer is trivially yes,
defining $\dcG$ with respect to this density, of course. Indeed, there
is no need to rearrange the vertices of $G_n'$ in this case: it is immediate from
Chernoff's inequality that whp every cut in $G(n,p)$ has size within $o(pn^2)$
of its expectation. As we shall see, in the extremely sparse case
the situation is rather different.

Firstly, with $p=c/n$, rearrangement is certainly necessary:
we must match all but $o(n)$ of the $(e^{-c}+\op(1))n$ isolated vertices
of $G_n$ with isolated vertices of $G_n'$.
In fact, matching up almost all the small components of $G_n$ with
isomorphic small components of $G_n'$, it is not hard to see
that the question above reduces to a question about the giant
component of $G(n,c/n)$. In particular, if $c\le 1$, then the answer
is yes, for the rather uninteresting reason that $G_n$
can be made isomorphic to $G_n'$ by adding and deleting $\op(n)$ edges.
As we shall see, this is the only possibility for a positive answer!

For two graphs $G_1$, $G_2$ with the same number of vertices,
the {\em (normalized) edit distance} between $G_1$ and $G_2$ is the minimum
number of edge changes (additions or deletions) needed
to turn one of the graphs into a graph isomorphic to the other,
divided by $pn^2$:
\begin{equation}\label{editdef}
 \dedit(G_1,G_2) = \frac{1}{pn^2}\min\{ |E(G_1)\diff E(G_2')| : G_2'\isom G_2\}.
\end{equation}
Usually, one would leave the edit distance unnormalized; here we normalize
for consistency with our notation for $\dcG$.
It seems that Axenovich, K\'ezdy and
Martin~\cite{AKM} were the first to define the edit distance
explicitly, although implicitly the notion had been used much
earlier, e.g., by Erd\H{o}s~\cite{Erd68} and Simonovits~\cite{Sim68}
in 1966, and in many subsequent papers.
If $|G_1|=|G_2|=n$, then, trivially, $\dcG(G_1,G_2)\le 2\dedit(G_1,G_2)$.
In general, $\dcG$ may be {\em much} smaller than $\dedit$;
for example, in the dense case where $p=1$, two independent
instances of the random graph $G(n,1/2)$ are very close in $\dcG$, but
far apart in $\dedit$. One can construct sparse examples by `cheating': take
two instances of $G(\sqrt{n},1/2)$ padded with $n-\sqrt{n}$ isolated vertices.

Although $\dcG$ and $\dedit$ are in general very different, in the
extremely sparse case it turns out that they are closely
related, at least for well behaved graphs.
\begin{lemma}\label{dcde}
Let $p=1/n$, and let $(G_n)$ and $(G_n')$ be sequences of graphs with $O(n)$ edges,
with $|G_n|=|G_n'|=n$. Suppose that any $o(n)$ vertices of $G_n$ meet
$o(n)$ edges of $G_n$, and that the same holds for $G_n'$.
Then $\dcG(G_n,G_n')\to 0$ if and only if $\dedit(G_n,G_n')\to 0$.
\end{lemma}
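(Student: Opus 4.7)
The forward implication is immediate from the text's observation that $\dcG\le 2\dedit$. For the converse, my plan is to argue by contradiction in the style of the preceding theorem. Suppose along a subsequence that $\dedit(G_n,G_n')\ge\eps$ for some $\eps>0$ while $\dcG(G_n,G_n')\to 0$. For each $n$ I pick $G_n''\cong G_n'$ on $V(G_n)$ realising $\dcG(G_n,G_n')$ up to an additive $o(1)$, and write $D_n^+=E(G_n)\setminus E(G_n'')$ and $D_n^-=E(G_n'')\setminus E(G_n)$. The definition of $\dedit$ then gives $|D_n^+|+|D_n^-|\ge\eps p n^2=\eps n$, so after possibly swapping the roles of $G_n$ and $G_n''$ (the local-density hypothesis is available for both), I may assume $|D_n^+|\ge\eps n/2$.

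The first step is to extract a linear-sized matching in $D_n^+$. Since $D_n^+\subseteq E(G_n)$, the hypothesis on $G_n$ implies that any $o(n)$ vertices meet $o(n)$ edges of $D_n^+$. If $M$ is a maximum matching in $D_n^+$, its vertex set $V(M)$ is a vertex cover, so if $|V(M)|=2|M|$ were $o(n)$ then it would meet only $o(n)$ edges of $G_n$, contradicting $|D_n^+|\ge\eps n/2$. Hence along a further subsequence I obtain a matching $\{u_iw_i\}_{i=1}^r$ in $D_n^+$ with $r\ge cn$ for some constant $c>0$.

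The second step is the randomisation trick from the preceding proof. Put $U=\{u_i\}$ and $W=\{w_i\}$, let $U'\subseteq U$ include each $u_i$ independently with probability $1/2$, and set $W'=\{w_i:u_i\notin U'\}$. As in that proof no matching edge of $D_n^+$ lies in the ordered cut $U'\times W'$, while every other edge between $U$ and $W$---whether in $D_n^+$ or in $D_n^-$---appears there with probability $1/4$. A short computation gives
\[
\E\bigl[e_{G_n}(U',W')-e_{G_n''}(U',W')\bigr]
=\tfrac14\bigl(e_{G_n}(U,W)-e_{G_n''}(U,W)\bigr)-\tfrac{r}{4}.
\]
Since $\dcG(G_n,G_n')\to 0$ forces $|e_{G_n}(U,W)-e_{G_n''}(U,W)|=o(n)$, the expectation above has absolute value at least $r/4-o(n)\ge cn/8$ for large $n$. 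But each realisation of $(U',W')$ satisfies $|e_{G_n}(U',W')-e_{G_n''}(U',W')|\le n\,\dcG(G_n,G_n')=o(n)$, and hence so does the expectation---the desired contradiction.

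The main obstacle is really the first step: without the bounded local density hypothesis, $D_n^+$ could concentrate on an $o(n)$ vertex set where a small matching still covers many edges, leaving the cut norm tiny while the edit distance remains linear---precisely the ``padded $G(\sqrt n,1/2)$'' phenomenon described just before the lemma. Once the linear matching is in hand, the randomisation step is essentially copied from the previous theorem.
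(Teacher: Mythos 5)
Your proof is correct. Both directions go through, and the subtle steps are handled properly: the vertex-cover argument does extract a linear-sized matching in $D_n^+$ from the local-density hypothesis once $|D_n^+|=\Theta(n)$, and the expectation computation for $e_{G_n}(U',W')-e_{G_n''}(U',W')$ is right (matching edges contribute $0$ for $G_n$ and simply are not present in $G_n''$; all other $U$--$W$ edges of either graph appear in the cut with probability $1/4$). Your route differs from the paper's in the randomisation step. The paper, after obtaining a linear matching $M$ in $G_n\setminus G_n'$, selects each matching edge independently with a small probability $a=c/(2C)$ and examines the induced subgraph on the resulting vertex set $V$: matching edges of $G_n$ survive with probability $a$ while any edge of $G_n'$ needs two independent successes (probability $a^2$), and tuning $a$ against the bound $e(G_n')\le Cn$ forces a $\Theta(n)$ deficit on $V\times V$. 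You instead reuse the $U'/W'$ coin-flip splitting of matched pairs from the preceding (unnumbered) Theorem and witness the discrepancy on the off-diagonal block $U'\times W'$; this avoids introducing the tuning parameter $a$ and, in fact, does not use the $O(n)$ edge bound at all in the contradiction step. The paper's structure is ``case split: large matching $\Rightarrow$ contradiction; else $\Rightarrow$ small edit distance,'' whereas yours is ``suppose edit distance is large, extract a large matching via local density, then contradiction''---logically the same content, just arranged differently. Your version is slightly more economical; the paper's is more self-contained within the lemma's proof.
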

\begin{proof}
As noted above, $\dcG(G_n,G_n')\le 2\dedit(G_n,G_n')$, so our task is to show
that if $\dcG(G_n,G_n')\to 0$, then $\dedit(G_n,G_n')\to 0$.
Relabelling the vertices of $G_n'$, we may assume that
$\cn{\ka_{G_n}-\ka_{G_n'}}\to 0$.

Let $C$ be a constant such that $e(G_n), e(G_n')\le Cn$.
Suppose first that $G_n\nobreak\setminus\nobreak G_n'$
(or $G_n'\nobreak\setminus\nobreak G_n$) contains
a matching $M$ of size at least $cn$, for some constant
$c>0$. Set $a=c/(2C)>0$, and select
each edge of $M$ with probability $a$,
independently of the others. Write $M'$ for the set of edges
obtained, and $V$ for the vertex set of $M'$.
Then $\E(e(G_n[V]))$, 
the expected number of edges of $G_n$ spanned by $V$,
is at least $acn$, considering only edges in $M$. On the other
hand, if $e\notin M$ has both ends in the vertex set of $M$,
then the probability that both endpoints
of $e$ are included in $V$ is $a^2$,
so $\E(e(G_n'[V]))\le a^2e(G_n') \le a^2Cn$.
Recalling that $a=c/(2C)$, it follows that
\[
 \E\bb{e(G_n[V])- e(G_n'[V])} \ge acn-a^2Cn = c^2n/(4C) = \Theta(n).
\]
Choosing a set for which the random difference is at least its expectation,
we see that $\cn{\ka_{G_n}-\ka_{G_n'}}=\Theta(1)$, a contradiction.

From the above we may assume that the largest matchings in $G_n\setminus G_n'$
and in $G_n'\setminus G_n$ have size $o(n)$. But then there is
a set $V$ of $o(n)$ vertices that
meets every edge of the symmetric difference $G_n\diff G_n'$.
By assumption, $V$ meets only $o(n)$ edges of $G_n\cup G_n'$,
so it follows that $e(G_n\diff G_n')=o(n)$, so $\dedit(G_n,G_n')\to 0$.
\end{proof}

Note that Lemma~\ref{dcde} becomes false if the condition
that $o(n)$ vertices meet $o(n)$ edges is omitted, as shown by the example
mentioned earlier, where $G_n$ and $G_n'$ are two instances of the random
graph $G(\sqrt{n},1/2)$, each with $n-\sqrt{n}$ isolated vertices added.

Lemma~\ref{dcde} applies to two independent instances $G_1$, $G_2$
of the random graph $G(n,c/n)$.
Hence, $\dcG(G_1,G_2)\pto 0$ if and only if $\dedit(G_1,G_2)\pto 0$.
It is not too hard to see that the latter condition cannot hold
for any $c>1$.

\begin{theorem}\label{th_notclose}
For every $c>1$ there is a $\delta=\delta(c)>0$ such that,
if $G_1$ and $G_2$ are independent
instances of $G(n,c/n)$, then
whp the unnormalized edit distance between
$G_1$ and $G_2$ is at least $\delta n$.
\end{theorem}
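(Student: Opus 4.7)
My plan is a union bound over labelled graphs close in edit distance to $G_1$, with the entropic cost absorbed by the large automorphism group of $G_2$ arising from its many small components.

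Writing $\Pr(G_2\isom H)=(n!/|\aut(H)|)\,p^{e(H)}(1-p)^{\binom{n}{2}-e(H)}$ for any labelled $H$ on $[n]$, we have
\[
\Pr\bigl(\dedit(G_1,G_2)\le\delta n\bigr)\le \E_{G_1}\Bigl[\,\sum_{H\,:\,|E(H)\triangle E(G_1)|\le\delta n}\Pr(G_2\isom H)\,\Bigr].
\]
The $\delta n$-edit ball around $G_1$ contains at most $\binom{\binom{n}{2}}{\delta n}(\delta n+1)=\exp(\delta n\log n+O(\delta n))$ labellings, and a routine Stirling-type computation gives, for each such $H$ with $e(H)=cn/2+O(\delta n+\sqrt{n\log n})$, the estimate $\log\Pr(G_2\isom H)=(1-c/2)\,n\log n-\log|\aut(H)|+O(n+\delta n\log n)$.

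The key input is a lower bound on $|\aut(H)|$. Whp $G_1$ has $(a_T+o(1))n$ components of each finite isomorphism type $T$, where $a_T$ are the explicit Borel-type constants for $G(n,c/n)$. Since each single edge change alters the total component count by at most one, $\sum_T|n_T(H)-n_T(G_1)|\le 2\delta n$, and hence
\[
|\aut(H)|\;\ge\;\prod_T n_T(H)!\;\ge\;\exp\bigl((\kappa(c)-2\delta)\,n\log n+O(n)\bigr),
\]
with $\kappa(c):=\sum_T a_T$ the asymptotic density of small components. Combining, $\Pr(\dedit\le\delta n)\le\exp\bigl((3\delta+1-c/2-\kappa(c))\,n\log n+O(n)\bigr)$, which vanishes for $\delta<(\kappa(c)+c/2-1)/4$ provided $\kappa(c)+c/2>1$.

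The main obstacle is the inequality $\kappa(c)+c/2>1$ for every $c>1$. By the Euler identity (components $=$ vertices $-$ edges $+$ cyclomatic number), $\kappa(c)+c/2-1=e(\text{giant})/n-\zeta(c)+o(1)$, where $\zeta$ solves $\zeta=1-e^{-c\zeta}$; using the identity $e(\text{giant})/n=c\zeta(2-\zeta)/2$, this reduces to $c\zeta<2(c-1)$. For $c$ bounded away from $1$ this is straightforward, but the near-critical verification requires the expansion $\zeta=2(c-1)-\tfrac{8}{3}(c-1)^2+\tfrac{28}{9}(c-1)^3+O((c-1)^4)$, yielding $\kappa(c)+c/2-1=\tfrac{2}{3}(c-1)^3+O((c-1)^4)>0$. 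Hence $\delta(c)>0$ exists for all $c>1$, with $\delta(c)=\Theta((c-1)^3)$ as $c\to 1^+$.
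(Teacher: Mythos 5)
Your argument is correct and takes a genuinely different route from the paper's proof, though the two rest on the same underlying numerical fact. The paper's proof is also a first-moment argument, but it works directly with subgraphs of $G_1$: it shows that for any fixed graph $H$ with $\floor{an}$ vertices and at least $bn$ edges, the probability that $G(n,c/n)$ contains all but $\delta n$ edges of some copy of $H$ is at most $n^{(a-b+\delta)n}e^{O(n)}=o(1)$ when $\delta<b-a$, and then exhibits such an $H\subset G_1$: for $c>2$ one can take $H=G_1$ itself, and for $1<c\le 2$ one takes $H$ to be the giant component, which by Erd\H{o}s--R\'enyi has $\rho(c)n+\op(n)$ vertices and $\zeta(c)n+\op(n)$ edges with $\rho(c)<\zeta(c)$. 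Your argument instead sums $\Pr(G_2\isom H)$ over the whole labelled edit ball around $G_1$ and kills the factor $n!$ not by restricting attention to a well-chosen piece of $G_1$ but by cashing in the automorphisms coming from the forest of $\Theta(n)$ small components, via $|\aut(H)|\ge\prod_T n_T(H)!$. Your needed inequality $\kappa(c)+c/2>1$ is, by the Euler identity you invoke, exactly $\zeta(c)-\rho(c)>0$, i.e.\ the positivity of the excess of the giant, so both proofs ultimately hinge on the same fact; yours accesses it through component entropy, the paper's through the placement entropy of an edge-rich subgraph. A pleasant side benefit of your version is that it avoids the paper's case split $c>2$ versus $1<c\le 2$ and produces the explicit rate $\delta(c)=\Theta((c-1)^3)$ as $c\to 1^+$.

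A few small points you should tighten when writing this up. A single edge addition or deletion can change the component-type vector by total variation up to $3$ (merging or splitting two components of distinct types), so the bound should be $\sum_T|n_T(H)-n_T(G_1)|\le 3\delta n$, not $2\delta n$; this only affects the constant in $\delta(c)$. To pass from $\sum_T|n_T(H)-n_T(G_1)|\le 3\delta n$ to $\sum_T n_T(H)\log n_T(H)\ge(\kappa(c)-O(\delta))n\log n-O(n)$ you should use the Lipschitz bound $|x\log x-y\log y|\le|x-y|(\log n+1)$ for integers $0\le x,y\le n$ together with a truncation to the finitely many types $T$ with $a_T$ non-negligible (choosing $L=L(\delta)$ so that $\sum_{|T|\le L}a_T>\kappa(c)-\delta$), since the tail types individually contribute $n_T=o(n)$ and the whp statement $n_T(G_1)=(a_T+o(1))n$ is only uniform over finitely many $T$ at a time. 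Finally, the claim that verifying $c\zeta<2(c-1)$ is ``straightforward for $c$ bounded away from $1$'' is immediate only for $c>2$ (where $\zeta\le 1<2-2/c$); for $1<c\le 2$ it still needs an argument — e.g.\ set $h(c)=e^{-2(c-1)}-(2/c-1)$, note $h(1)=0$ and $h'(c)=2/c^2-2e^{-2(c-1)}>0$ for $c>1$ since $\log c<c-1$, so $h(c)>0$ on $(1,\infty)$, which is a cleaner and complete verification than relying on the near-critical Taylor expansion alone.
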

In other words, normalizing with $p=1/n$, we have $\dedit(G_1,G_2)\ge \delta$
whp.
\begin{proof}
Let us start with an observation about $G(n,c/n)$.
Let $0<a<b$ be constants; we shall estimate the probability
of the event $E_\delta(H)$ that
$G_2=G(n,c/n)$ contains all but at most $\delta n$ edges
of some graph $H'$ isomorphic to $H$, where
$H$ is any given graph with $\floor{an}$ vertices and at least $bn$ edges,
and $\delta<b/2$.
There are $\binom{n}{\floor{an}}$ choices for the vertex set of $H'$,
and then at most $\floor{an}!$ graphs $H'$ with this vertex set
isomorphic to $H$. Finally, given $H'$, there are crudely
at most $\delta n \binom{e(H)}{\delta n}$ choices for the edges of $H'$
to omit, while the probability that $G(n,c/n)$ contains
the remaining edges is at most $(c/n)^{e(H)-\delta n}$. Hence,
\begin{eqnarray*}
 \Pr(E_\delta(H)) &\le&
 \binom{n}{\floor{an}} \floor{an}! \delta n \binom{e(H)}{\delta n}  (c/n)^{e(H)-\delta n} \\
 &\le& n^{an}  (e b/\delta)^{\delta n} (c/n)^{(bn-\delta n)}\delta n = n^{(a-b+\delta)n} e^{O(n)}.
\end{eqnarray*}
If $a$, $b$ and $\delta$ are constants with $\delta<b-a$, then the final probability is $o(1)$.

Turning to the proof of the theorem,
if $\dedit(G_1,G_2)\le\delta$, then the event $E_\delta(H)$ holds for any $H\subset G_1$.
If $c>2$, then $G_1$ itself has $n$ vertices and $cn/2+\op(n)$ edges,
so setting $a=1$ and $b=c'/2$ for any $2<c'<c$, whp $G_1$ has a subgraph
$H$ with $e(H)\ge bn$. Setting $\delta=(c'-2)/2>0$, whp we have
\[
  \Pr(\dedit(G_1,G_2)\le\delta \mid G_1) \le \Pr(E_\delta(H)) =o(1),
\]
from which the result follows.

If $1<c\le 2$, then the original results of Erd\H os and R\'enyi~\cite{ER60}
(see also~\cite{BBRG})
imply that there are functions $\rho(c)$ and $\zeta(c)$ with
$0<\rho(c)<\zeta(c)$ such that the largest component
of $G(n,c/n)$ has $\rho(c)n+\op(n)$ vertices and $\zeta(c)n+\op(n)$ edges.
Fixing $a<b$ with $\rho(c)<a<b<\zeta(c)$, it follows that whp $G_1=G(n,c/n)$
contains a subgraph $H$ with at most $an$ vertices and at least $bn$
edges. Taking $\delta=(b-a)/2>0$, the result follows as above.
\end{proof}

Using the results of Bollob\'as, Janson and Riordan~\cite{BJR},
the proof above may be extended easily to the much more general model $G_{1/n}(n,\ka)$,
although one first needs to decide what the appropriate statement is.
As in~\cite{BJR}, let $T_\ka$ be the integral operator associated to $\ka$,
defined by
\[
 (T_\ka f)(x) = \int_0^1 \ka(x,y) f(y) \dd\mu(y),
\]
and let $\norm{T_\ka}$ be its $L^2$-norm.
Roughly speaking, it was shown in~\cite{BJR}
that $G_{1/n}(n,\ka)$ has a giant component if and only if $\norm{T_\ka}>1$.
(There is a slight caveat here: the results of~\cite{BJR} assume
that $\ka$ is continuous almost everywhere; this assumption is only
needed due to the more general choice of the vertex types made there. It
is easy to see that these results apply to general $\ka$
if we choose the vertex types \iid, as we do in the definition of $G_{1/n}(n,\ka)$;
this is discussed in~\cite{BJRclust}.)

More precisely, it is shown in~\cite{BJR} that if $\ka$ satisfies a certain
irreducibility condition, then $G_{1/n}(n,\ka)$ has a `giant
component' with $\rho(\ka)n+\op(n)$ vertices and $\zeta(\ka)n+\op(n)$
edges, for constants $\rho(\ka)$ and $\zeta(\ka)$ satisfying
$0<\rho(\ka)<\zeta(\ka)$ whenever $\norm{T_\ka}>1$ (see Theorems 3.1 and 3.5
and Proposition 10.1 in~\cite{BJR}). Since any kernel effectively contains
an irreducible kernel (see Lemma 5.17 of~\cite{BJR}), it follows
that if $\ka$ is any kernel with $\norm{T_\ka}>1$, then there are constants
$0<a<b$ depending only on $\ka$ such that, whp,
$G_1=G_{1/n}(n,\ka)$ has a (connected) subgraph $H$ with at most $an$ vertices and at
least $bn$ edges. Setting $\delta=\delta(\ka)=(\zeta(\ka)-\rho(\ka))/2>0$,
the observation at the start of the proof of Theorem~\ref{th_notclose}
shows that, for any $c$, the probability that $G(n,c/n)$ contains
all but $\delta n$ edges of such a graph $H$ is $o(1)$.
If $\ka$ is bounded, then we may couple $G_2=G_{1/n}(n,\ka)$
and $G(n,\sup\ka /n)$ so that the former is a subgraph of the latter,
and it follows that whp independent copies $G_1$ and $G_2$ of $G_{1/n}(n,\ka)$
are at edit distance at least $\delta(\ka)/2$.

If $\ka$ is unbounded then we can approximate with bounded kernels as
in~\cite{BJR}; we omit the details, noting only that, considering
bounded kernels $\ka^M$ tending up to $\ka$, there is some $M_0$ such
that $\delta(\ka^{M_0})>0$. Then for any $M_1>M_0$, the argument above
shows that independent copies of $G_{1/n}(n,\ka)$ and $G_{1/n}(n,\ka^{M_1})$ are
whp at distance at least $\delta(\ka^{M_0})$, which does not depend on
$M_1$. This bound still applies if $M_1$ tends to infinity slowly enough,
but then $G_{1/n}(n,\ka^{M_1})$ and $G_{1/n}(n,\ka)$ are very close in $\dedit$.

As shown in~\cite[Proposition 8.11]{BJR},
the graph $G_{1/n}(n,\ka)$ satisfies the assumptions of Lemma~\ref{dcde}.
Putting the pieces together, we have thus
proved the following result.

\begin{theorem}
Let $\ka$ be a kernel with $\norm{T_\ka}>1$, and let
$G_n$ and $G_n'$ be independent instances of $G_{1/n}(n,\ka)$.
Then there is a constant $\delta>0$ such $\dedit(G_n,G_n')\ge \delta$
and $\dc(G_n,G_n')\ge \delta$ hold whp.\noproof
\end{theorem}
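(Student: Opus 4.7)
The plan is to implement the program sketched in the four paragraphs immediately preceding the theorem: produce a dense subgraph $H$ inside $G_n$ via the giant-component results of~\cite{BJR}, use the counting estimate from the proof of Theorem~\ref{th_notclose} to show $G_n'$ cannot almost contain $H$, handle unbounded $\ka$ by truncation, and invoke Lemma~\ref{dcde} to transfer the resulting $\dedit$ lower bound to $\dc$.

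First, suppose $\ka$ is bounded by some $K$. Since $\norm{T_\ka}>1$, after passing to an irreducible sub-kernel (Lemma~5.17 of~\cite{BJR}) the results of~\cite{BJR} (Theorems~3.1 and~3.5 and Proposition~10.1) give constants $0<\rho(\ka)<\zeta(\ka)$ and guarantee that whp $G_n$ contains a connected subgraph $H$ with at most $an$ vertices and at least $bn$ edges, for any fixed $\rho(\ka)<a<b<\zeta(\ka)$. Coupling $G_n'\subseteq G(n,K/n)$, the counting estimate at the start of the proof of Theorem~\ref{th_notclose} (with $c=K$ and any $\delta<b-a$) shows that whp $G_n'$ contains fewer than $e(H)-\delta n$ edges of every copy of $H$, yielding $\dedit(G_n,G_n')\ge \delta$ whp. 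For unbounded $\ka$, I would pick $M_0$ with $\norm{T_{\ka^{M_0}}}>1$---available since the self-adjoint nonnegative operators $T_{\ka^M}$ satisfy $\norm{T_{\ka^M}}\uparrow\norm{T_\ka}$ by monotone convergence of the quadratic forms---and apply the bounded case to $\ka^{M_0}$ to get $\delta_0>0$. The monotone coupling $G_{1/n}(n,\ka^{M_0})\subseteq G_{1/n}(n,\ka^{M_1})\subseteq G_{1/n}(n,\ka)$ then shows that independent copies from $G_{1/n}(n,\ka^{M_1})$ and $G_{1/n}(n,\ka)$ are whp at edit distance $\ge \delta_0$, uniformly in $M_1\ge M_0$. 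Letting $M_1=M_1(n)\to\infty$ slowly enough that $\tfrac{n}{2}\int(\ka-\ka^{M_1})\to 0$, the graphs $G_{1/n}(n,\ka^{M_1})$ and $G_{1/n}(n,\ka)$ differ in only $\op(n)$ edges, and a triangle-inequality argument returns the edit-distance bound for two independent copies of $G_{1/n}(n,\ka)$.

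To obtain the $\dc$ bound I would appeal to Lemma~\ref{dcde}: Proposition~8.11 of~\cite{BJR} verifies the ``any $o(n)$ vertices meet $o(n)$ edges'' hypothesis, giving $\dcG(G_n,G_n')\ge \delta$ whp. The inequality $\dc\le \dcG$ goes the wrong way, so the remaining task is to show the additional freedom offered to $\dc$---general measure-preserving bijections of $[0,1]$ rather than vertex permutations---cannot actually help. I would do this by recycling the counting estimate with subsets of $[0,1]$ in place of vertex sets: a hypothetical rearrangement $\tau$ with $\cn{\ka_{G_n}-\ka_{G_n'}^{(\tau)}}<\delta/4$ would force $G_n'$ to place edge mass at least $b-o(1)$ on the measure-$a$ image $\tau^{-1}(V(H))$, and rounding this set to a union of vertex intervals (using the bounded per-vertex density) recovers an honest vertex embedding that contradicts the Theorem~\ref{th_notclose}-style count.

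The main obstacle, and the one step the paper leaves the reader to verify, is precisely this passage from $\dcG$ to $\dc$. The discretization/rounding argument requires quantifying how much cut-norm is lost when a measure-preserving image of a vertex set is snapped to a union of intervals, and the bounded-per-vertex density supplied by Proposition~8.11 of~\cite{BJR} is essential to control this error. The bounded case, the truncation step, and the routing through Lemma~\ref{dcde} are, by contrast, direct applications of already-proved results.
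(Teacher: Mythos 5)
Your treatment of the edit-distance half is exactly the paper's: the dense connected subgraph $H$ with $an$ vertices and $bn$ edges ($a<b$) supplied by the giant-component results of~\cite{BJR}, the union-bound estimate from the start of the proof of Theorem~\ref{th_notclose} applied through a coupling $G_n'\subset G(n,\sup\ka/n)$, and the truncation $\ka^{M_0}\le\ka^{M_1}\le\ka$ with $M_1\to\infty$ slowly. That part is correct and is all the paper itself does. You are also right to flag the passage to $\dc$ as the weak point: the paper's only stated justification is Lemma~\ref{dcde} together with Proposition~8.11 of~\cite{BJR}, and Lemma~\ref{dcde}, as stated and proved, concerns $\dcG$ (its proof begins by \emph{relabelling the vertices} of $G_n'$, which is precisely the optimization defining $\dcG$). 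Since $\dc\le\dcG$ for graphs on the same number of vertices, the conclusion $\dc(G_n,G_n')\ge\delta$ is strictly stronger than what the cited pieces deliver, and the paper offers nothing further.

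The repair you sketch, however, does not work as described. If $\tau$ is a measure-preserving bijection with $\cn{\ka_{G_n}-\ka_{G_n'}^{(\tau)}}$ small and $T$ is the image of $V(H)$, then writing $t_i=n\mu(T\cap I_i)\in[0,1]$ what you extract is $\sum_{ij\in E(G_n')}t_it_j\ge (b-o(1))n$ with $\sum_i t_i\le an$ --- a \emph{fractional} dense-subgraph witness. The counting estimate of Theorem~\ref{th_notclose} is a union bound over integral vertex subsets and says nothing about the continuum of fractional witnesses; one cannot dismiss them out of hand, since already $t_i\equiv a$ gives objective $a^2e(G_n')$ on a ``set'' of measure $a$. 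Nor does threshold rounding $W=\{i:t_i\ge\theta\}$ recover an integral witness: one only gets $|W|\le an/\theta$, while the discarded edge mass is of order $\theta\, e(G_n')=\Theta(\theta n)$ (and worse without a maximum-degree bound, which Proposition~8.11 of~\cite{BJR} does not provide --- it only gives uniform integrability of the degrees), so for average degree large relative to $(b/a)^2$ no choice of $\theta$ preserves the inequality ``$\Theta(n)$ more edges than vertices'' that drives the union bound. A sound argument must use the randomness of $G_n'$ against all measurable $T$ simultaneously, for instance by adapting the matching/random-bisection device from the first theorem of Section~2, which is designed exactly to handle arbitrary rearrangements; this is the route taken in the companion paper~\cite{BJRcutsub}. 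So the $\dedit$ statement is proved, but your derivation of the $\dc$ statement has a genuine gap at precisely the step the paper also leaves unjustified.
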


In fact, standard martingale arguments show that each of $\dedit(G_n,G_n')$
and $\dc(G_n,G_n')$ is concentrated about its mean, which
is thus bounded away from zero. We omit the details, which are
very similar to those in the proof of Theorem~\ref{dPconc} below.

As noted above for $G(n,c/n)$, the condition $\norm{T_\ka}>1$ is necessary:
otherwise, from the results of Bollob\'as, Janson and Riordan~\cite{BJR},
$G_{1/n}(n,\ka)$ consists almost entirely of small tree components,
with the number of copies of any given tree concentrated. It follows
easily that $\dedit(G_n,G_n')$ and hence $\dc(G_n,G_n')$ converge
to $0$ in probability (and almost surely) in this case.

\section{Tree counts}\label{treecounts}

Since it seems that we cannot do much with the cut metric
$\dc$ in the extremely sparse
case, let us now turn our attention to the count or subgraph
metric $\de$.
As in~\cite{BRsparse}, given two graphs $F$ and $G$
we write $\hom(F,G)$ for the number
of {\em homomorphisms} from $F$ to $G$, i.e., the number
of maps $\phi:V(F)\to V(G)$ such that $xy\in E(F)$ implies
$\phi(x)\phi(y)\in E(G)$,
and $\emb(F,G)$ for the number of {\em embeddings}
of $F$ into $G$, i.e., the number of injective homomorphisms
from $F$ to $G$.
If $G_n$ has $n$ vertices, we normalize the
homomorphism and subgraph counts by setting
\[
 t_p(F,G_n)= \frac{\hom(F,G_n)}{n^{|F|}p^{e(F)}} \quad\hbox{and}\quad
 s_p(F,G_n)= \frac{\emb(F,G_n)}{n_{(|F|)}p^{e(F)}},
\]
where $p=p(n)$ is our normalizing edge density, here $1/n$,
and $n_{(|F|)}=n(n-1)\cdots (n-|F|+1)$ is the number of possible embeddings of $F$.
In~\cite{BRsparse}, the subgraph metric $\de$ is defined
by choosing a certain set $\A$ of {\em admissible} graphs,
and defining $\de$ so that $(G_n)$ is Cauchy if and only
if $s_p(F,G_n)$ converges as $n\to\infty$ for each $F\in \A$.

Let $F$ be a connected graph which is not a tree.
The denominator in the definition of $t_p(F,G_n)$ or $s_p(F,G_n)$
is $\Theta(n^{|F|}p^{e(F)})$, which is order $1$ if $F$ is unicyclic,
and tends to zero if $F$ contains two or more cycles. This suggests that,
in this range, the parameters $s_p(F,\cdot)$ and $t_p(F,\cdot)$ make
sense only if $F$ is a tree, i.e., that we should take for $\A$
the set $\T$ of (isomorphism classes of) finite trees.
Indeed,
with $F$ unicyclic, convergence of $s_p(F,G_n)$ simply means
that for large $n$, every $G_n$ contains the same number of
copies of $F$. This condition is very far from the kind of 
global graph property we are looking for.
Since the expected number of copies of a connected graph
$F$ in $G(n,c/n)$ tends to infinity if and only if $F$ is tree,
roughly speaking we do not expect to see small cycles in graphs
with $\Theta(n)$ edges. Of course, there are natural examples
of extremely sparse graphs containing many short cycles, but we should
handle these differently; see Section~\ref{ss_nontree}.
For now, we shall consider graphs that, like $G(n,c/n)$,
contain few short cycles.
More formally, throughout this section we assume
that $(G_n)$ is {\em asymptotically treelike}, in the sense
that
\begin{equation}\label{at}
 \emb(F,G_n) = o(n)
\end{equation}
for any connected $F$ that is not a tree. Under a suitable
assumption on the degrees in $G_n$, it
suffices to impose condition \eqref{at} for cycles.

Under the assumption \eqref{at}, it is easy to see that the parameters
$(s_p(T,G_n))_{T\in \T}$ and $(t_p(T,G_n))_{T\in \T}$ are essentially
equivalent. In particular, up to a $o(1)$ error, for any tree $T$,
$t_p(T,G_n)$ can be written as a linear
combination of the parameters $s_p(T',G_n)$, $|T'|\le |T|$, and vice versa.
We shall work with $s_p(T,G_n)$, which is more natural. Adjusting
the normalizing constant very slightly, we shall simply set
\[
 s_p(T,G_n) = \emb(T,G_n)/n.
\]
As in~\cite{BRsparse}, we assume that the normalized
counts of all admissible subgraphs remain bounded.
In other words,
we shall assume that
\[
 \sup_n s_p(T,G_n) \le c_T <\infty
\]
for each
tree $T$. In fact, it will be convenient to make the stronger assumption
that the tree
counts are {\em exponentially bounded}, i.e., 
that there
is a constant $C$ such that
\[
 \limsup s_p(T,G_n)\le C^{e(T)}
\]
for every tree $T$.
For example, taking $T$ to be a star, this condition
implies that the $k$th moment of the degree of a random vertex of $G_n$
is at most $C^k+o(1)$  as $n\to\infty$. As in~\cite{BRsparse},
writing $\F$ for the set of isomorphism classes of finite graphs,
and enumerating the set
$\T$ of isomorphism classes of finite trees
as $T_1,T_2,\ldots$, define a map
\[
  s_p:\F_0\to X,
  \qquad  G\mapsto (s_p(T_i,G))_{i=1}^\infty,
\]
where $\F_0$ is the subset of $\F$ consisting of graphs satisfying the
tree-count bounds above, and $X=\prod_i [0,c_{T_i}]$.
We then define the (tree) subgraph
distance $\de(G_1,G_2)$ between
two graphs $G_1$ and $G_2$ as $d(s_p(G_1),s_p(G_2))$, where $d$ is any metric
on $X$ giving rise to the product topology.
(It is not clear whether $\de$ is a metric
or only a pseudometric, but this is not important.)
Defining 
\[
 s(F,\ka) = \int_{[0,1]^k} \prod_{ij\in E(F)}\ka(x_i,x_j)\prod_{i=1}^k \dd x_i,
\]
as before, we may extend $s_p$ to bounded kernels $\ka$
by mapping $\ka$ to $(s(T_i,\ka))_{i=1}^\infty$, and it then
makes sense to consider
the question of when $\de(G_n,\ka)\to 0$.

Unlike the cut metric, it is certainly possible to have convergence in the metric
$\de$. Indeed, it is easy to check that
$s_p(T,G_{1/n}(n,\ka))$ converges in probability to $s(T,\ka)$
for any tree $T$ and any bounded kernel $\ka$, and one can then show that
$\de(G_{1/n}(n,\ka),\ka)\to 0$ in probability and (coupling suitably)
with probability $1$.

In this section, the main questions we shall
consider are: which points $X$ of $[0,1]^\T$
are realizable as limits of sequences 
$(s_p(G_n))$, where $(G_n)$ is asymptotically
treelike and has bounded tree counts,
and how do these limit points relate to kernels?
In fact, we shall reformulate these questions slightly.

\medskip
Recall that in the definition of $\emb(T,G_n)$, the tree $T$ is labelled.
Let us also regard $T$ as rooted, with root vertex $1$, say.
Letting $v$ denote a vertex of $G_n$ chosen uniformly at random,
$s_p(T,G_n)$ is simply the expected number of embeddings from $T$ 
into $G_n$ mapping the root to $v$. Letting $T$ run over stars,
the numbers $s_p(T,G_n)$ give the moments of the degree of $v$;
assuming these moments converge, and that the limiting moments
are exponentially bounded, a standard result implies that 
they determine the limiting degree distribution.
(For an example
of two non-negative integer valued distributions with the
same finite moments, see Janson~\cite{Jrounding}, taking,
for example, $\alpha=\log(2/3)$ and $\alpha=\log(4/5)$ in
Example 2.12.) More generally, the parameters $s_p(T,G_n)$
for all finite trees $T$ provide a sort of moment characterization
of the local neighbourhood of $v$.
Presumably, if these tree
counts are exponentially bounded, then they characterize
the distribution of the local neighbourhood of $v$.
(It should not be hard to adapt the proof that convergence
of exponentially bounded moments implies convergence in distribution
based on the Jordan--Bonferroni inequalities:
consider the event that certain edges are present, forming
a copy of some given tree $T$ with $v$ as the root, and apply inclusion--exclusion
to calculate the probability that certain other edges are
absent, so $T$ is the local neighbourhood of $v$; we have
not checked the details.)
In fact,
for this something weaker than exponentially bounded tree counts
should be enough, but exponential boundedness is a natural
assumption, as it is the global analogue of the (perhaps too restrictive)
local condition that all degrees are uniformly bounded.
In any case, it makes more sense to study the distribution of local neighbourhoods
directly, rather than its moments.

For $t\ge 0$, let $\Ga_{\le t}(v)$ denote the subgraph of $G_n$
formed by all vertices within graph distance $t$ of $v$,
regarded as a rooted graph with root $v$.
Let $\Tr_t$ denote the set of isomorphism classes of rooted trees
of height at most $t$, i.e., such that every vertex is within distance
$t$ of the root. Finally, for $T\in \Tr_t$, let
\[
 p(T,G_n) = p_t(T,G_n) = \Pr\bb{ \Ga_{\le t}(v) \isom T},
\]
where $v$ is a vertex of $G_n$ chosen uniformly at random,
and $\isom$ denotes isomorphism as rooted graphs.
Officially, the subscript $t$ is necessary in the notation,
since if $T\in \Tr_t\subset \Tr_{t+1}$, then $\Ga_{\le t}(v)\isom T$
and $\Ga_{\le t+1}(v)\isom T$ are different conditions.
In practice, we work with the disjoint union $\Tr$ of
the sets $\Tr_t$, $t=0,1,2,\ldots$,
so each $T\in \Tr$ comes with a height $t$ which we do not indicate
in the notation; $T$ may or may not contain vertices at distance $t$ from the root.

Since each $p(T,G_n)$ lies in $[0,1]$, any sequence $(G_n)$
has a subsequence on which $p(T,G_n)$ converges for every $T$.
We would like to study the set $\P$ of possible limits $(p(T))_{T\in \Tr}$
arising from asymptotically treelike sequences with exponentially bounded
tree counts.

We start with some simple observations. First note
that if $(G_n)$ is asymptotically treelike, then
\[
 \sum_{T\in \Tr_t} p(T,G_n) \to 1
\]
as $n\to\infty$ with $t$ fixed; this is simply the statement
that only $o(n)$ vertices have cycles in their $t$-neighbourhoods.
In general, such a statement does not carry over to the limit,
since infinite sums are not continuous with respect to pointwise
convergence. Recall that we are assuming that
the tree counts in $(G_n)$ are (exponentially) bounded.
It is easy to check that, under this assumption,
for any $t\ge 0$ and any $\eps>0$, there exists an $M$
such that, for large enough $n$, at most $\eps n$ vertices
$v$ have more than $M$ edges in their $t$-neighbourhoods.
(In the terminology of probability theory, for each $t$
the sequence $(X_{n,t})$ of random variables given by
$X_{n,t}=e(\Ga_{\le t}(v))$ is `tight', where $v$ is a uniformly
chosen random vertex of $G_n$. Equivalently,
the random variables $\Ga_{\le t}(v)$, $n=1,2,\ldots$, are themselves tight.)
In other words, for large $n$,
at least $1-\eps$ of the mass of the distribution
$(p(T,G_n))_{T\in \Tr_t}$ is concentrated on a fixed, finite
set of trees. Under this condition, pointwise convergence
implies convergence in $L^1$, and it follows
that $\sum_{T\in \Tr_t} p(T)=1$ whenever $(p(T))\in \P$.
In other words, $p$ gives a probability distribution on $\Tr_t$
for each $t$.

Given a finite or infinite rooted tree $T$, we may define the {\em restriction}
of $T$ to height $t$ to be the subtree $T|_t$ of $T$ induced by all vertices
within distance $t$ of the root, which we naturally regard as an
element of $\Tr_t$. If $(G_n)$ is asymptotically treelike, then,
for any $T\in \Tr_t$, we have
\[
 p(T,G_n) = \sum_{T'\in \Tr_{t+1}: T'|_t=T} p(T',G_n) +o(1).
\]
(The $o(1)$ correction appears because of the possibility that
the $t+1$ neighbourhood of a random vertex $v$ contains a cycle
while the $t$ neighbourhood does not.)
Using $L^1$ convergence, it follows that
\[
 p(T) = \sum_{T'\in \Tr_{t+1}: T'|_t=T} p(T')
\]
whenever $(p(T))\in \P$. In other words, the distribution on $\Tr_t$ is
obtained from that on $\Tr_{t+1}$ by the natural restriction operation.

This last fact allows us to combine the distributions on $\Tr_t$ given
by $p\in \P$ into a single probability distribution on the
set $\Tr_\infty$ of (finite or infinite) locally finite rooted trees.
Of course, we take for the set of measurable events the $\sigma$-field
generated by the sets
\[
 E_{t,T} = \{T'\in \Tr_\infty: T'|_t = T\}
\]
for $t=0,1,2,\ldots$ and $T\in \Tr_t$.
We say that a probability distribution $\pi$ on $\Tr_\infty$
is the {\em local limit} of a sequence $(G_n)$ of graphs
with $|G_n|=n$ if
\[
 \lim_{n\to\infty} p_t(T,G_n) = \pi(E_{t,T})
\]
holds for every $t\ge 0$ and every $T\in \Tr_t$.
We should like to know which probability distributions $\pi$ on $\Tr_\infty$
arise as local limits of asymptotically treelike, exponentially
bounded sequences $(G_n)$.
(For a closely related question of Aldous and Lyons~\cite{AL},
see Section~\ref{ss_nontree}.)

We shall give some examples of such distributions in the next section,
arising from random graphs. Here, we give a trivial example:
fix $d\ge 2$ and let $\pi$ be the distribution that is concentrated on 
one point of $\Tr_\infty$, the (infinite) $d$-regular tree. This distribution
arises as a local limit if we take $G_n$ to be a random $d$-regular
graph with $n$ vertices.

Having given a trivial example of a distribution that can arise, let
us give a trivial example of one that cannot. Let $\pi$ be any distribution
concentrated on the two trees $T\in \Tr_\infty$ in which every vertex has
degree $2$ or $3$, and the neighbours of a vertex of degree $2$
have degree $3$ and vice versa. (There are two such trees
since our trees are rooted.) Considering graphs $G_n$ in which the
vertices have degrees $2$ and $3$, there is no `local reason' why the
neighbourhoods of a random vertex $v$ shouldn't look like the first
few generations of $\pi$. However, unless $\pi$ satisfies
an extra condition, there is of course a global reason:
there are as many edges from vertices of degree $2$ to vertices
of degree $3$ as vice versa. Thus,
if all edges join vertices of different degrees, we must have $3/2$
times as many degree $2$ vertices as degree $3$ vertices,
and if $\pi$ is to arise as a local limit, it must assign probabilities $3/5$
and $2/5$ to the trees in which the root has degree $2$ and $3$,
respectively.

More generally, consider the following two ways of picking
a (not uniformly) random vertex of $G_n$. (A) pick a vertex
$v$ with probability proportional to its degree.
(B) pick a vertex $w$ with probability proportional to its degree,
then choose an edge incident with $w$ uniformly,
and let $v$ be the other end of this edge.
It is easy to see that (A) and (B) give the same distribution
for the vertex $v$ - indeed, we are simply choosing an edge
$e$ of $G$ at random, and then picking an end of $e$ at random.
In (B) we `change our minds' after picking the random end, which makes no difference.
The equivalence of (A) and (B) gives rise to a consistency condition
on our distributions $\pi$.

Let $\pi$ be any probability distribution on $\Tr_\infty$ in which
the expected degree of the root is finite. (Any distribution
in $\P$ will have this property,
due to the bounded average degree of graphs in $G_n$.)
Then we may define the `root-sized-biased' version $\tpi$
of $\pi$ to be the distribution $\pi$ biased by the degree
of the root. In other words, for any $t\ge 1$ and $T\in \Tr_t$,
\[
 \tpi(E_{t,T}) = \frac{d_0(T) \pi(E_{t,T}) }{\E_\pi(d_0)},
\]
where the $d_0(T)$ denotes the degree of the root of $T$,
and $\E_\pi(d_0)$ is simply the expectation of the degree
of the root of a $\pi$-random $T\in \Tr_\infty$.
Let us define the {\em shift} $\tpi^*$ of $\tpi$ to be the distribution
on $\Tr_\infty$ obtained as follows: first choose a $T\in \Tr_\infty$
according to the distribution $\tpi$. Then pick a neighbour
$v$ of the root uniformly from among the $d_0(T)$ neighbours.
Finally, let $T'\in\Tr_\infty$ be the tree obtained from $T$
by taking $v$ as the root. Since the restriction of $T'$
to height $t$ is determined by the restriction of $T$ to height $t+1$,
it is easy to check that this does define a probability
distribution $\tpi^*$ on $\Tr_\infty$.

Using the equivalence of the procedures (A) and (B) above for picking a
random vertex of $G_n$, it is easy to see that if $\pi$ arises
as the local limit of one of our sequences $(G_n)$, then $\pi$
is {\em shift invariant}, in that $\tpi^*=\tpi$.
It is tempting to believe that this condition is sufficient,
but in fact, as pointed out to us by G\'abor Elek,
this is not the case, as we shall now explain.

An infinite graph is called {\em quasi-transitive} if the action of
its automorphism group on the vertex set induces a finite number of orbits,
i.e., if there are only finitely many different `types' of vertices
in the graph. A quasi-transitive tree may be described
by a square matrix $A=(a_{ij})$ specifying, for each $i$ and $j$,
the number of type-$j$ neighbours each vertex of type $i$ has.
Also, given any square matrix $A$ with non-negative integer entries
in which $a_{ij}>0$ if and only if $a_{ji}>0$, one can construct
a corresponding quasi-transitive tree. (This correspondence
is not one-to-one; it may be that vertices corresponding to different
rows of $A$ end up having the same type. For example, if each
row of $A$ has the same sum $d$, then $T$ is simply the $d$-regular
tree. It is easy to describe conditions on $A$ under which
this kind of `collapse' does not happen.)

\begin{example}\label{eT234} {\bf A non-unimodular tree.}
Let $T$ be the infinite (unrooted) tree corresponding to the matrix
\[
 A= \left(\begin{matrix}
  0 & 1 & 1 \\ 
  2 & 0 & 1 \\
  1 & 3 & 0
\end{matrix}\right).
\]
Thus vertices in $T$ have degree 2, 3 or 4, each vertex has
one neighbour of the `next' degree (where $2$ follows $4$),
and 1, 2 or 3 neighbours of the previous degree.
There are three rooted trees corresponding to $T$; let us call
these $T_2$, $T_3$ and $T_4$, where the root of $T_i$ has degree $i$.

Suppose that $\pi$ is a probability distribution on $\Tr_\infty$
supported on $\{T_2,T_3,T_4\}$, and giving mass $\pi_i$
to $T_i$. Suppose also that $\pi$ is the local limit
of an  asymptotically
treelike sequence $(G_n)$. Then, considering the $1$-neighbourhood
of a random vertex, the convergence assumption
implies that $G_n$ has $\pi_i n+o(n)$ vertices of each degree $i$, $i=2,3,4$.
Considering the 2-neighbourhood, we also see that $\pi_2 n+o(n)$
vertices of degree $2$, i.e., all but $o(n)$ vertices of degree $2$,
have one neighbour of degree $3$ and one of degree $4$.
Hence the number of edges of $G_n$ from degree 2 vertices to degree 3
vertices is $\pi_2 n+o(n)$. But, similarly, all but $o(n)$
of the $\pi_3 n+o(n)$ vertices of degree 3 have two neighbours of degree
2, so there are $2\pi_3 n+o(n)$ edges from degree 3 vertices to degree 2 vertices.
Hence, $\pi_2=2\pi_3$. Similar arguments show that $\pi_3=3\pi_4$ and $\pi_4=\pi_2$.
Together these equations imply that $\pi_i=0$ for all $i$, contradicting $\pi_2+\pi_3+\pi_4=1$.
Thus there is no probability distribution supported on $\{T_2,T_3,T_4\}$
that is a local limit.

A little calculation shows that taking
$\pi_2=9/20$, $\pi_3=7/20$ and $\pi_4=4/20$
gives a shift-invariant distribution supported on $\{T_2,T_3,T_4\}$,
so this shift-invariant distribution is not a local limit.
\end{example}

The reason for the terminology `non-unimodular' above
will become clear in Section~\ref{ss_nontree}.
A different example of a non-unimodular tree is given in Example~3.1
of Benjamini, Lyons, Peres and Schramm~\cite{BLPS_GAFA99}, corresponding
to the matrix
\[
 A= \left(\begin{matrix}
  0 & 1 & 1 \\ 
  1 & 0 & 2 \\
  2 & 1 & 0
\end{matrix}\right).
\]

The argument leading to $\pi_2=2\pi_3$ in the example above can be generalized.
In this argument, we drew an oriented edge from a vertex $x$ to a neighbour $y$
if the local neighbourhood (in this case the $2$-neighbourhood) 
of $x$ satisfied a certain rule (that $x$ had degree $2$
and $y$ degree $3$). We can of course use any such rule.
For this it turns out to be useful to consider {\em doubly rooted trees},
i.e., trees with an ordered pair of adjacent distinguished vertices
$x$ and $y$. Formally, we shall consider $\Trr_\infty$,
the set of triples $\{(T,x,y)\}$ in which
$(T,x)$ is a rooted locally finite tree and $y$ is a neighbour of $x$.
(We do not quotient by isomorphisms at this stage.)
Similarly, we consider the set $\Trr_t=\{(T,x,y)\}$ of finite doubly rooted
trees in which all vertices are within distance $t$ of $x$.

Let $A\subset \Trr_t$ be any {\em isomorphism invariant} set
of doubly rooted trees of radius $t$.
We say that a (potentially) infinite doubly rooted tree $(T,x,y)$ is in
$A$ if its restriction to the $t$-neighbourhood of the root is in $A$.
Note that if $x$ and $y$ are adjacent vertices of $T$, then $(T,y,x)$
is to be viewed as a tree rooted at $y$, with a second distinguished vertex $x$.
We claim that if $\pi$ is a local limit, then
\begin{equation}\label{io1}
 \E_\pi |\{y: (T,x,y)\in A\}| = \E_\pi |\{y: (T,y,x)\in A\}|,
\end{equation}
where the expectation is over the choice of a random rooted tree $(T,x)$
with distribution $\pi$. The argument is as above so let us just outline it:
let $(G_n)$ be a sequence of finite graphs converging to $\pi$
in the appropriate sense. In each $G_n$, draw a directed edge from a vertex $x$
to a neighbour $y$ if and only if $(\Ga_{\le t}(x),x,y)\in A$, where
$\Ga_{\le t}(x)$ is the $t$-neighbourhood of $x$ in $G_n$. 
Now the limiting fraction of vertices of $G_n$ whose $t$-neighbourhood
has a certain form is given by $\pi$. It follows that the expected out-degree
of a random vertex of $G_n$ converges to the left-hand side of \eqref{io1}.
On the other hand, the limiting fraction of vertices of $G_n$ whose $(t+1)$-neighbourhood
has a certain form is again given by $\pi$. From the $(t+1)$-neighbourhood
of $x$ one can obtain the $t$-neighbourhood of each neighbour $y$ of $x$,
and thus decide whether we drew an edge from $y$ to $x$. It follows
that the expected in-degree converges to the right-hand side of \eqref{io1}.
Since in any finite directed graph, the average out- and in-degrees are equal,
\eqref{io1} follows.

Let $\isom$ denote isomorphism of doubly-rooted trees. In the above, we took
$A$ to be any subset of $\Trr_\infty/\isom$ such that whether $(T,x,y)\in A$ holds
is determined by the $t$-neighbourhood of $x$ for some $t=t(A)$. In general,
we can consider arbitrary subsets of $\Trr_\infty/\isom$ that may
be approximated by such subsets, i.e., arbitrary measurable subsets
of $\Trr_\infty/\isom$. Also, replacing a subset by its characteristic function,
there is no need to restrict to $0$-$1$ functions. This leads
us to the following definition.

Let $\pi$ be a probability measure on $\Tr_\infty$. Then $\pi$ is called
{\em involution invariant} if for every non-negative measurable function
$f$ on $\Trr_\infty/\isom$ we have
\begin{equation}\label{io2}
 \E_\pi \sum_y f(T,x,y) = \E_\pi \sum_y f(T,y,x)
\end{equation}
where the expectation is over the $\pi$-random rooted tree $(T,x)$, and the
sum is over all neighbours $y$ of $x$. Note that $f$ must be isomorphism
invariant, but if the root $x$ of $T$ has degree $d$, then there are $d$ terms in the sums above,
even if several of these correspond to isomorphic doubly-rooted trees.
Note also that it suffices to consider functions $f$ that are characteristic
functions of measurable sets.

We have seen above that if $\pi$ is a local limit then $\pi$ must be involution invariant.
This observation was first made (in a slightly different context)
by Benjamini and Schramm~\cite{BSrec}; we return to this in Section~\ref{ss_nontree}.
We do not know whether this necessary condition on $\pi$
is sufficient. (See also Question~\ref{qglim}.)

\begin{question}\label{qtlim}
Does every involution-invariant probability distribution on $\Tr_\infty$
arise as the local limit of some sequence $(G_n)$ of graphs
with $|G_n|=n$?
\end{question}
The sequence $(G_n)$ above will necessarily be asymptotically treelike
(otherwise the total weight of $\pi$ would be less than $1$,
so $\pi$ would not be a probability distribution). However,
in the question above we have lost the condition that the tree
counts of $(G_n)$ be exponentially bounded. Such a condition
may or may not be needed to get sensible limiting behaviour.
To avoid possible complications, in the first draft of this
paper we posed the following variant of Question~\ref{qtlim}.

\begin{question}\label{qtlimD}
Let $\Delta\ge 2$ be given, and let $\pi$ be an involution-invariant
probability distribution on the set of trees $T\in \Tr_\infty$
with maximum degree at most $\Delta$.
Must $\pi$ arise as the local limit of some sequence $(G_n)$
of graphs with all degrees at most $\Delta$?
\end{question}

Question~\ref{qtlimD} has now been answered in the affirmative
by Elek~\cite{Elek}.

Let us finish this section by noting that there is a certain
class of distributions for which the answer to the questions
above is trivially yes, namely the class of 
probability distributions $\pi$ on {\em finite} rooted trees.
It is easy to check that a distribution $\pi$ on the set of finite rooted
trees (a subset of $\Tr_\infty$) corresponds in the natural way
to a distribution on unrooted trees if and only if $\pi$
is involution invariant. In this case it is easy to construct
forests $G_n$ with the appropriate limiting distribution of trees.

\section{Tree counts in random graphs}

We have seen in the previous section that the metric $\de$, defined
using only tree counts, makes sense in the extremely sparse case ($p=1/n$),
in that there are non-trivial convergent sequences: unlike the cut metric,
$\de$ is not `too strong' to be meaningful. Here we ask whether it is `too weak'
to be useful,
i.e., whether too many sequences converge that `should not'. Of course
this is rather a vague question, so we concentrate on a precise special
case: we have seen that for any bounded kernel we have
$\de(G_{1/n}(n,\ka),\ka)\to 0$ with probability 1, so we ask: which kernels
are distinguished by $\de$? In other words, when do two kernels
$\ka_1$ and $\ka_2$ satisfy $s(T,\ka_1)=s(T,\ka_2)$ for every tree $T$?
As in the previous section, we shall change viewpoint slightly,
considering the distribution of local neighbourhoods, rather than counting
subgraphs.

Adopting the terminology of Bollob\'as, Janson and
Riordan~\cite{BJR}, let $(\sss,\mu)$ be an arbitrary probability
space. By a {\em kernel} on $(\sss,\mu)$ we mean an integrable, symmetric, non-negative
function on $\sss\times \sss$. So far we have almost always
taken $\sss=[0,1]$ and $\mu$ Lebesgue measure, but the notation is
more convenient if we are rather more general here. As in~\cite{BJR}
(but taking the special case where the vertex types are \iid),
suppressing the dependence on $(\sss,\mu)$ in the notation, we may
form a random graph $G_{1/n}(n,\ka)$ as follows: let $x_1,\ldots,x_n\in
\sss$ be \iid\ with the distribution $\mu$, and then, given
$(x_1,\ldots,x_n)$, join each pair of vertices $\{i,j\}\subset
[n]^{(2)}$ with probability $\min\{\ka(x_i,x_j)/n,1\}$, independently of the other
pairs. We say that vertex $i$ has {\em type} $x_i$ and call
$(\sss,\mu)$ the {\em type space}.

Let $\bp_\ka$ be the multi-type Poisson Galton--Watson branching process
naturally associated to $\ka$: we start in generation $0$ with a
single particle whose type is distributed according to $\mu$. A
particle of type $x$ has children whose types form a Poisson process
on $\sss$ with the distribution $\ka(x,y)\dd\mu(y)$: the number of
such children in a measurable set $A\subset \sss$ is Poisson with
mean $\int_A \ka(x,y)\dd\mu(y)$. As usual, the children of different
particles are independent, and independent of the history. This
branching process is the key to the analysis of the random graph
$G_{1/n}(n,\ka)$ in~\cite{BJR}.

The branching process $\bp_\ka$ is of course simply a random rooted tree
with labels attached to the vertices, giving the type of each vertex.
Forgetting the labels, we may regard $\bp_\ka$ as a random rooted
tree; we write $\pi_\ka$ for the corresponding probability measure
on $\Tr_\infty$. It is not hard to check
that if $\ka$ is bounded, then $\pi_\ka$ provides the correct local approximation
for $G_{1/n}(n,\ka)$: in the notation of the previous section,
for each $T\in \Tr_t$ we have
\[
 p_t(T,G_{1/n}(n,\ka)) \to  \pi_\ka(E_{t,T}) \hbox{ with probability }1.
\]
This is the distributional equivalent of the convergence
in moments given by $s_p(T,G_{1/n}(n,\ka))\to s(T,\ka)$ for every tree $T$.

In the light of the comments above, we should like to answer the following
question: when do two different branching processes $\bp_{\ka_1}$
and $\bp_{\ka_2}$ give rise to the same random tree?
In other words,
when is $\pi_{\ka_1}=\pi_{\ka_2}$? It is not hard to check that,
at least for bounded $\ka$, the counts $(s(T,\ka))_{T\in \T}$
determine $\pi_\ka$ and vice versa, so this is the same question as that asked
at the start of the section. Since $\pi_{\ka}$ directly describes the local structure
of $G_{1/n}(n,\ka)$, we consider the present branching process
formulation more informative.

There is an obvious case when $\pi_{\ka_1}=\pi_{\ka_2}$: let $\ka_i$
be a kernel on $(\sss_i,\mu_i)$.
We say that $\ka_1$ {\em refines} $\ka_2$, and write $\ka_1\prec \ka_2$,
if there is a measure-preserving map $\tau:\sss_1\to\sss_2$
such that for $\mu_1$-almost every $x\in \sss_1$ we have
\[
 \int_{\tau^{-1}(A)} \ka_1(x,y) \dd\mu_1(y) = \int_A \ka_2(\tau(x),y) \dd\mu_2(y)
\]
for all measurable $A\subset \sss_2$.
(This is a very different notion to that appearing in \ssequiv,
despite the superficial similarity to $\ka_1=\ka_2^{(\tau)}$.)
In other words, if we take a
particle of $\bp_{\ka_1}$ and look at the distribution of the images
under $\tau$ of the types of its children, then this distribution
depends only on the image of the type of the original particle, and
it does so according to the kernel $\ka_2$. From this description it
is immediate that if $\ka_1\prec \ka_2$, then
$\pi_{\ka_1}=\pi_{\ka_2}$.

From now on we shall concentrate on the finite-type case, i.e., take $\sss$ to be finite.
Note that there is a natural correspondence between this case
and the case of kernels $\ka$ on $[0,1]^2$ that are piecewise constant on rectangles.
In this case $\ka_1\prec\ka_2$ simply means that the types associated to $\ka_1$
may be grouped together to form the types associated to $\ka_2$,
and the distribution of the grouped types of the children of a particle in $\bp_{\ka_1}$
is what it should be in $\bp_{\ka_2}$.

The relation $\prec$ is clearly transitive. Hence the natural
conjecture is that two kernels give the same distribution on trees
if and only if they have a common refinement. Or should it be if and only
if they are both refinements of a common `coarsening'? In fact,
somewhat surprisingly, the two are equivalent!

\begin{theorem}\label{updown}
Let $\ka_1$ and $\ka_2$ be finite-type kernels. Then the following are equivalent:
(i) there is a finite-type kernel $\kar$ with $\kar\prec\ka_1$ and $\kar\prec\ka_2$,
and 
(ii) there is a finite-type kernel $\kac$ with $\ka_1\prec\kac$ and $\ka_2\prec\kac$.
\end{theorem}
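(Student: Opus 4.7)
The plan is to use two dual constructions on type spaces: a fiber product of $\sss_1$ and $\sss_2$ over $\sss_\cc$ to get (ii) $\Rightarrow$ (i), and a pushout (an equivalence relation on the type space of $\kar$) to get (i) $\Rightarrow$ (ii). Both constructions are natural and produce finite-type kernels provided we start with finite-type kernels.

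For (ii) $\Rightarrow$ (i): given measure-preserving maps $\tau_i:\sss_i\to\sss_\cc$ witnessing $\ka_i\prec\kac$, I would take the fiber product $\sss_\rr=\{(x,y)\in\sss_1\times\sss_2:\tau_1(x)=\tau_2(y)\}$, with measure $\mu_\rr(x,y)=\mu_1(x)\mu_2(y)/\mu_\cc(\tau_1(x))$ (points with $\mu_\cc(\tau_1(x))=0$ carry no $\mu_1$- or $\mu_2$-mass, so they may be discarded), and define
\[
 \kar\bb{(x_1,y_1),(x_2,y_2)} = \frac{\ka_1(x_1,x_2)\,\ka_2(y_1,y_2)}{\kac(\tau_1(x_1),\tau_1(x_2))},
\]
with the convention $0/0=0$; the refinement conditions force the numerator to vanish when the denominator does. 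Symmetry is immediate. The coordinate projection $\pi_1:\sss_\rr\to\sss_1$ is measure-preserving, and to verify $\kar\prec\ka_1$ via $\pi_1$ I would fix $x_2\in\sss_1$ and $(x_1,y_1)\in\sss_\rr$, and sum $\kar((x_1,y_1),(x_2,y_2))\mu_\rr(x_2,y_2)$ over $y_2\in\tau_2^{-1}(\tau_1(x_2))$. The inner $y_2$-sum collapses via $\ka_2\prec\kac$ to $\kac(\tau_1(x_1),\tau_1(x_2))\mu_\cc(\tau_1(x_2))$, which cancels the denominator and leaves $\ka_1(x_1,x_2)\mu_1(x_2)$, as required. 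The argument for $\pi_2$ and $\ka_2$ is symmetric.

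For (i) $\Rightarrow$ (ii): given measure-preserving maps $\sigma_i:\sss_\rr\to\sss_i$ witnessing $\kar\prec\ka_i$, I would let $\sim$ be the smallest equivalence relation on $\sss_\rr$ with $u\sim v$ whenever $\sigma_1(u)=\sigma_1(v)$ or $\sigma_2(u)=\sigma_2(v)$, and set $\sss_\cc=\sss_\rr/{\sim}$ with the pushforward measure. By construction each $\sim$-class $B$ is simultaneously a union of $\sigma_1$-fibers and of $\sigma_2$-fibers, so the quotient map factors through both $\sigma_1$ and $\sigma_2$, yielding measure-preserving maps $\bar\sigma_i:\sss_i\to\sss_\cc$. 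I would then define
\[
 \kac(B,B')\,\mu_\cc(B') = \sum_{v\in B'}\kar(u,v)\mu_\rr(v)\qquad\text{for any }u\in B.
\]
The heart of the argument is showing this is independent of the choice of $u\in B$: if $\sigma_1(u)=\sigma_1(u')$, then, since $B'$ is $\sigma_1$-saturated, the identity $\kar\prec\ka_1$ gives equality of the two sums; if $\sigma_2(u)=\sigma_2(u')$, the same argument works via $\kar\prec\ka_2$ using that $B'$ is also $\sigma_2$-saturated; transitivity of $\sim$ then extends this to all $u,u'\in B$. Symmetry of $\kac$ follows by passing to the $\mu_\rr$-weighted double sum over $B\times B'$, and $\ka_1\prec\kac$ via $\bar\sigma_1$ (and the analogue for $\ka_2$) is immediate from the definition.

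The main obstacle is conceptual rather than computational: one has to notice that the correct common refinement is obtained by pulling back \emph{with a non-product measure}, and that the correct common coarsening requires taking the transitive closure over two different fibrations. The latter is the only point where the construction could plausibly fail, and the entire argument hinges on the observation that the $\sim$-classes are saturated under both $\sigma_1$ and $\sigma_2$, which is exactly what makes the two refinement identities compatible.
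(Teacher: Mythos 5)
Your proof is correct and is essentially the same argument as the paper's. For (ii) $\Rightarrow$ (i) your fiber-product construction (the type space $\{(x,y):\tau_1(x)=\tau_2(y)\}$, the twisted measure $\mu_1\mu_2/\mu_\cc$, and the quotient kernel $\ka_1\ka_2/\kac$) is identical to what the paper writes down. For (i) $\Rightarrow$ (ii) the paper first replaces $\kar$ by a version with type space $\sss_\rr\subseteq\sss_1\times\sss_2$ and then takes $\sss_\cc$ to be the set of connected components of the bipartite graph with vertex classes $\sss_1,\sss_2$ and edge set $\sss_\rr$; your transitive closure of ``same $\sigma_1$-fiber or same $\sigma_2$-fiber'' is exactly this component relation (after the implicit grouping of types with the same pair of images, which changes nothing), and your check that $\sum_{v\in B'}\kar(u,v)\mu_\rr(v)$ is constant as $u$ ranges over a $\sim$-class via the two saturation properties is precisely the paper's claim that $\la(e,C')$ is constant along walks in a component of the bipartite graph. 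The only differences are presentational: you work abstractly with generated equivalence relations where the paper uses connected components of a bipartite graph, and you add an explicit remark on the symmetry of $\kac$ which the paper leaves implicit.
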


\begin{proof}
Let $\ka_i$ have type-space $(\sss_i,\mu_i)$.
Since the definition of $\prec$ ignores sets of measure zero, we may assume
that each $\mu_i$ is a strictly positive measure on the finite set $\sss_i$.

We start by showing that (ii) implies (i).
Suppose then that $\ka_1\prec\kac$ and $\ka_2\prec\kac$. Let $\kac$ have
type space $(\sss_\cc,\mu_\cc)$. Then each of $\bp_{\ka_1}$, $\bp_{\ka_2}$ may
be viewed as $\bp_{\kac}$ with `extra labels' on the vertices,
indicating the subtypes in $\sss_i$. We wish to add labels of both kinds simultaneously.
It is easy to write down a way of doing so.

Let $\tau_i:\sss_i\to \sss_\cc$ be the map witnessing $\ka_i\prec\kac$.
Let $\sss_\rr$ be the set of pairs $(i,j)\in \sss_1\times\sss_2$ with
$\tau_1(i)=\tau_2(j)$, and set
\[
 \mu_\rr((i,j)) = \frac{\mu_1(i)\mu_2(j)}{\mu_\cc(\tau_1(i))}.
\]
Summing first over all $i\in \sss_1$ and $j\in \sss_2$ mapping
to a given $k\in \sss_c$, it is easy to check that $\mu_\rr$
is a probability measure on the finite set $\sss_\rr$. It remains
to define an appropriate kernel.

For $(i,j)$, $(k,\ell)\in \sss_\rr$, set
\[
 \ka_\rr((i,j),(k,\ell)) = \frac{\ka_1(i,k) \ka_2(j,\ell)}{\kac(\tau_1(i),\tau_1(k))},
\]
whenever the denominator is non-zero, and set $\ka_\rr((i,j),(k,\ell))=0$ otherwise.

Since, despite appearances, the definitions above are symmetric with respect
to $\ka_1$ and $\ka_2$, to establish (i) it suffices to show that $\ka_\rr\prec\ka_1$.
Of course, in doing so we shall map $(i,j)\in \sss_\rr$ to $i\in \sss_1$.
This map is measure preserving.
Since $\ka_\rr$ is of finite type, all we must check
is that for any $(i,j)\in \sss_\rr$ and $k\in \sss_1$,
a particle of type $(i,j)$ in $\bp_{\sss_\rr}$ has the right number of children
of (subtypes of) type $k$. In other words, we must show that
\[
 \sum_{\ell\::\:\tau_2(\ell)=\tau_1(k)} \ka_\rr((i,j),(k,\ell))\mu_\rr((k,\ell)) = \ka_1(i,k) \mu_1(k).
\]
If $\kac(\tau_1(i),\tau_1(k))$ is zero then $\ka_1(i,k)$ is also zero (since $\ka_1\prec \kac$),
so both sides are zero. Otherwise,
the left hand side above is simply
\[
 \sum_{\ell\::\:\tau_2(\ell)=\tau_1(k)} \frac{\ka_1(i,k) \ka_2(j,\ell)}{\kac(\tau_1(i),\tau_1(k))}
  \frac{\mu_1(k)\mu_2(\ell)}{\mu_\cc(\tau_1(k))}
 = \ka_1(i,k)\mu_1(k) \frac{\sum_{\ell\::\:\tau_2(\ell)=\tau_1(k)} \ka_2(j,\ell)\mu_2(\ell)}
 { \kac(\tau_1(i),\tau_1(k))\mu_\cc(\tau_1(k))}.
\]
Recalling that $\tau_1(i)=\tau_2(j)$, the final fraction is $1$ since $\tau_2$
is a map witnessing $\ka_2\prec \kac$: the numerator and denominator
both express, for a particle of type $j\in\sss_2$ (and hence
of type $\tau_2(j)\in \sss_\cc$), the expected number of children
of type $\tau_1(k)$. This completes the proof that (ii) implies
(i).

Suppose now that (i) holds, i.e., that $\ka_1$ and $\ka_2$ have a common refinement $\kar$.
Each type in the corresponding type space maps to some $i\in\sss_1$ and some $j\in\sss_2$.
It is easy to see that we may group together all types mapping to the same pair $(i,j)$,
obtaining a common refinement of $\ka_1$ and $\ka_2$ that we also
denote $\kar$, with type space $\sss_\rr$ a subset of $\sss_1\times\sss_2$.
As before, we delete any types with measure zero.
We may regard $\sss_\rr$ as the edge set of a bipartite graph $G$ with vertex classes
$\sss_1$ and $\sss_2$; we shall construct the common coarsening $\kac$ as a kernel
with type space the set of components of $G$.
Since $\ka_\rr\prec \ka_1$, the map from $\sss_\rr$ to $\sss_1$ given by $(i,j)\mapsto i$
is measure preserving. In other words, for each $j$, writing $ij$ for $(i,j)$,
\begin{equation}\label{sij}
 \sum_{j\::\:ij\in E(G)} \mu_\rr(ij) = \mu_1(i).
\end{equation}
Similarly,  $\sum_{i\::\:ij\in E(G)} \mu_\rr(ij) = \mu_2(j)$.
For each component $C$ of $G$, set $\mu_\cc(C)=\sum_{ij\in E(C)} \mu_\rr(ij)$;
this defines a probability
measure on the set $\sss_\cc$ of components of $G$.
If $C$ has vertex set $C_1\cup C_2$, $C_i\subset \sss_i$, then from \eqref{sij},
\[
 \mu_\cc(C) = \sum_{i\in C_1}\sum_{j\::\:ij\in E(G)} \mu_\rr(ij) = \sum_{i\in C_1} \mu_1(i).
\]
Hence the map $\tau_1:\sss_1\to\sss_\cc$ mapping each $i\in \sss_1\subset V(G)$
to the component in which it lies is measure preserving. Similarly for
the corresponding map $\tau_2:\sss_2\to\sss_\cc$.

Fix two components $C$ and $C'$ of $G$, which need not be distinct.
For each edge $e\in C$ set
\[
 \la(e,C') = \sum_{f\in E(C')} \ka_\rr(e,f)\mu_\rr(f),
\]
the expected number of children in $C'$ of a particle in $\bp_{\ka_\rr}$ of type $e$.
Writing $e=ij$, we may rewrite $\la(e,C')$ as follows:
\[
 \la(ij,C') = \sum_{k\in C_1'}\sum_{k\ell\in E(G)} \ka_\rr(ij,k\ell)\mu_\rr(kl)
 = \sum_{k\in C_1'} \ka_1(i,k)\mu_1(k),
\]
where the second step is from $\ka_\rr\prec\ka_1$.
This quantity does not depend on $j$, so if $e_1$, $e_2$ are edges
of $C$ meeting at a vertex of $C_1$, then $\la(e_1,C')=\la(e_2,C')$.
A similar argument, using $\ka_\rr\prec\ka_2$, gives the same
conclusion for edges meeting at a vertex of $C_2$.
Since $C$ is connected, it follows that $\la(e,C')$ is constant
on the edges of $C$. Let us write the common value as
$\kac(C,C')\mu_\cc(C')$.
Then, for each $i\in C_1$,
\[
 \sum_{k\in C_1'} \ka_1(i,k)\mu_1(k) = \kac(C,C')\mu_\cc(C'),
\]
and similarly, for each $j\in C_2$,
\[
 \sum_{\ell\in C_2'} \ka_2(j,\ell)\mu_2(\ell) = \kac(C,C')\mu_\cc(C').
\]
These last two identities establish $\ka_1\prec \kac$ and $\ka_2\prec\kac$
respectively, completing the proof.
\end{proof}

The statement of Theorem~\ref{updown} makes sense in the general case,
without the restriction to finite-type kernels, but the proof as
written does not. It is easy to adapt the proof that (ii) implies
(i) to the general case, but it does not seem to be easy to prove
that (i) implies (ii) in general. Indeed, it is not impossible that
this implication is false in the general case.

\medskip
Our main aim in this section is to prove the following result.

\begin{theorem}\label{bpdist}
Let $\ka_1$ and $\ka_2$ be finite-type kernels. Then $\pi_{\ka_1}=\pi_{\ka_2}$
if and only if there is a kernel $\kac$ with $\ka_1\prec\kac$ and $\ka_2\prec\kac$.
\end{theorem}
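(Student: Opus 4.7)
The direction ``common coarsening $\Rightarrow \pi_{\ka_1}=\pi_{\ka_2}$'' is essentially already in the text: if $\ka_i\prec\kac$ via some $\tau_i$, then the image of the labelled process $\bp_{\ka_i}$ under $\tau_i$ is precisely $\bp_{\kac}$, so after forgetting labels $\pi_{\ka_i}=\pi_{\kac}$ for $i=1,2$. The content is in the converse. My plan is to attach to each $\ka_i$ a canonical minimal coarsening $\kac^{(i)}$, built in such a way that the triple $(\sss_\cc^{(i)},\mu_\cc^{(i)},\kac^{(i)})$ actually depends only on $\pi_{\ka_i}$; then $\pi_{\ka_1}=\pi_{\ka_2}$ forces $\kac^{(1)}=\kac^{(2)}$, which serves as the desired common coarsening.

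For each $i$, define $D_i\colon\sss_i\to\PP(\Tr_\infty)$ by setting $D_i(x)$ equal to the law of the unlabelled rooted tree produced by $\bp_{\ka_i}$ started from a single particle of type $x$. Put $\sss_\cc^{(i)}:=D_i(\sss_i)$, and equip it with the pushforward $\mu_\cc^{(i)}:=(D_i)_*\mu_i$. The essential branching identity is that for each $\alpha\in\sss_\cc^{(i)}$ and each $x\in D_i^{-1}(\alpha)$, the quantity
\[
 \nu_\alpha^{(i)}(\{\beta\}) \;:=\; \int_{D_i^{-1}(\beta)}\ka_i(x,y)\dd\mu_i(y)
\]
depends only on $\alpha$, being the Poisson intensity of coarse-type-$\beta$ root-children in a tree sampled from $\alpha$. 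Set $\kac^{(i)}(\alpha,\beta):=\nu_\alpha^{(i)}(\{\beta\})/\mu_\cc^{(i)}(\beta)$. Symmetry of $\kac^{(i)}$ follows from
\[
 \mu_\cc^{(i)}(\alpha)\nu_\alpha^{(i)}(\{\beta\}) = \sum_{x\in D_i^{-1}(\alpha),\,y\in D_i^{-1}(\beta)}\mu_i(x)\ka_i(x,y)\mu_i(y),
\]
which is symmetric in $(\alpha,\beta)$ by the symmetry of $\ka_i$. By construction $D_i$ is measure-preserving and satisfies the rate identity in the definition of $\prec$, so $\ka_i\prec\kac^{(i)}$.

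It remains to show that the triple $(\sss_\cc^{(i)},\mu_\cc^{(i)},\kac^{(i)})$ is intrinsically determined by $\pi_{\ka_i}$. Observe that $\pi_{\ka_i}=\sum_{\alpha\in\sss_\cc^{(i)}}\mu_\cc^{(i)}(\alpha)\alpha$ expresses $\pi_{\ka_i}$ as a finite positive combination of pairwise distinct probability measures on $\Tr_\infty$. If these atoms are affinely independent in the space of signed measures on $\Tr_\infty$, then both the set of atoms and their weights can be read off from $\pi_{\ka_i}$, and then $\nu_\alpha^{(i)}$ is recovered from the marginal Poisson process of the root-children's subtrees under the law $\alpha$, itself a function of $\alpha$ alone. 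The main obstacle is exactly this affine-independence claim for the minimal tree laws. I would establish it by induction on a truncation depth $t$: finiteness of $\sss_\cc^{(i)}$ yields some $T$ at which the restrictions $\alpha|_T$ to $\Tr_T$ are pairwise distinct, and uniqueness of Poisson-process mixtures (via the Laplace functional of the children's point process) promotes distinctness of the restricted laws to linear independence. The delicate point is that two distinct minimal types can share their first few generations and separate only later, so the induction must carefully track the joint structure at each level before Poisson-mixture uniqueness can be applied at the next.
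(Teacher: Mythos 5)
Your construction of the canonical coarsening is essentially the same as the paper's, just phrased differently: your map $D_i$ identifies $x$ with $y$ exactly when $\la_k(x)=\la_k(y)$ for all $k$ (the paper's relation $\sim$), since by the paper's Lemma~\ref{l_claim2}(i) the hierarchy $(\la_k(x))_k$ determines the tree law started from $x$, and by~\ref{l_claim2}(ii) the converse holds. The easy direction and the verification that $D_i$ witnesses $\ka_i\prec\kac^{(i)}$ are also fine.

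The genuine gap is precisely the step you flag as ``delicate'': the claim that the finite mixture $\pi_\ka=\sum_\alpha\mu_\cc(\alpha)\,\alpha$ determines the atoms and weights. Affine (or linear) independence of the atoms appearing in \emph{one} decomposition is not on its own enough --- you need identifiability across all possible decompositions into distinct pure tree laws, i.e., linear independence of any finite collection of distinct minimal tree laws --- and establishing this is where all the work in the theorem lives. Your proposed route, ``truncate until the restrictions $\alpha|_T$ are pairwise distinct, then use Laplace functionals of the children's point process,'' does not obviously close: distinctness of finitely many probability measures does not give linear independence, and the Poisson-mixture uniqueness you invoke applies naturally to a single parameter (the root intensity $\la_1$) rather than to types that share $\la_1$ and separate only at a deeper level. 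The paper's Lemma~\ref{l_claim2}(ii) handles exactly this by a different device: condition on the root degree $d_0=N$ and let $N\to\infty$, so that the conditional law of the root type concentrates on the types maximizing $\la_1$ while the law of large numbers makes the empirical branch distribution converge to the theoretical one; this identifies the $\lamax$-stratum and its weight, which are then subtracted off and the argument iterated on the next stratum. Your sketch would need to be replaced or substantially developed along these lines before the identifiability claim is actually established.
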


The proof will be a little involved (although most of the difficulties
are notational rather than actual), so we shall start by illustrating
a very simple special case of the basic idea.

Let $\ka$ be a kernel on the (finite) type space $(\sss,\mu)$.
In fact, our initial remarks will apply to general kernels.
Let $T\in \Tr_\infty$ denote a random rooted tree with distribution $\pi_\ka$,
so $T$ is obtained from $\bp_\ka$ by forgetting the types of the particles.
Also, let $T_t=T|_t$ denote the subtree of $T$ corresponding to the first
$t$ generations of $\bp_\ka$. We shall show that the distribution of $T_t$
is determined by a certain function of $\ka$, and vice versa.
We start by writing out the much simpler case $t=1$.

The tree $T_1$ is simply a star, so its distribution is determined by the
distribution of the degree of the root, i.e., the distribution
of the number $d_0$ of children of the initial particle of $\bp_\ka$.
As in~\cite{BJR}, for each $x\in \sss$, let us write
\[
 \la(x)=\la_1(x)= \int_\sss \ka(x,y)\dd\mu(y)
\]
for the {\em expected degree} of $x$, i.e., the expected number of children
of a particle of type $x$.
(The reason for the subscript $1$ will become clear later.)
Also, let $\La=\La_1$ denote the {\em (first order) expected degree distribution}
of $\ka$, i.e., the distribution of $\la(x)$ when $x$ is chosen
according to the distribution $\mu$.
From the definition of $\bp_\ka$, given the type of the root,
$d_0$ has a Poisson distribution with mean $\la(x)$.
Thus the unconditional distribution of $d_0$ is the mixed Poisson
distribution $\Po(\La)$, defined in the discrete case by
\begin{equation}\label{Pl}
 \Pr(\Po(\La)=k) = \sum_\la \Pr_{\La}(\la) \frac{\la^ke^{-\la}}{k!},
\end{equation}
where the sum is over the finite set of possible values of $\La$.
It follows that the distribution $\La$ determines the distribution
of $d_0$, and hence of $T_1$. The reverse is also true, since
the distribution of $\Po(\La)$ determines that of $\La$. (This is
trivial in the discrete case, using the tail of $\Pr(\Po(\La)=k)$
for large $k$ to identify the maximum possible value of
$\La$ and its probability, then subtracting off
the corresponding contribution to the sum in~\eqref{Pl},
identifying the next largest possible value, and so on.
The general case is not hard, using the generating function
of the distribution $\Po(\La)$.)

Let $x$ be a type with $\la(x)>0$.
From the definition of $\bp_\ka$, the types of the children
of a particle of type $x$ form a Poisson process on $\sss$
with intensity measure  $\mu_x$, defined
by $\dd\mu_x(y)=\ka(x,y)\dd\mu(y)$.
In order to understand the distribution of $T_2$, we consider
the {\em offspring expected degree distribution}
$\la_2(x)$, the image of $\mu_x(y)$ under the map $y\mapsto \la_1(y)$.
Thus, if $\mu_x$ were a probability measure, $\la_2(x)$
would be the distribution of $\la_1(Y)$ when $Y$
has the distribution $\mu_x$; in general, neither
$\mu_x$ nor $\la_2(x)$ is a probability measure:
they both have total mass $\mu_x(\sss)=\la_1(x)$.

Similarly, for $k\ge 3$, we define $\la_k(x)$ to be the image of the measure
$\mu_x(y)$ under the map $y\mapsto\la_{k-1}(y)$.
Thus
\[
 (\la_k(x))(A) = \int_{y\in \sss\::\: \la_{k-1}(y)\in A}  \dd\mu_x(y)
 = \int_{y\in \sss\::\: \la_{k-1}(y)\in A}  \ka(x,y)\dd\mu(y).
\]
Note that for a given $x$, $\la_1(x)=\la(x)$ is a real number,
$\la_2(x)$ is a measure on the reals, $\la_3(x)$ is a measure on the set of measures
on the reals, and so on. If $\ka$ is of finite type, then all these measures
are discrete.
By the {\em $k$-th order expected degree distribution} $\La_\ka$ of $\ka$, we mean
the distribution of $\la_k(x)$ when $x$ is chosen randomly with distribution $\mu$.

We shall deduce Theorem~\ref{bpdist} from the following lemma.

\begin{lemma}\label{l_claim1}
Fix $k\ge 1$, and
let $\ka$ be a finite-type kernel. Then
the distribution $\La_k$ determines the distribution of $T_k$ and vice versa.
\end{lemma}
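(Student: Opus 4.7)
The plan is to induct on $k$, with $k=1$ handled by the mixed-Poisson argument already sketched in the text. To make the induction close up properly, I will strengthen the statement to include two pointwise assertions alongside the equivalence: (A) for every type $x$, the conditional distribution $\pi_k(x)$ of $T_k$ given that the root has type $x$ depends on $x$ only through $\la_k(x)$; and (B) for the finitely many distinct values $\nu$ taken by $\la_k$, the resulting conditional distributions $\pi_k^{\la}(\nu)$ are linearly independent as signed measures on rooted trees.

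For the inductive step, I first observe that conditional on the root type $x$, the children of $\bp_\ka$ form a Poisson process on $\sss$ with intensity $\mu_x$, and each child of type $y$ spawns an independent subtree of distribution $\pi_{k-1}(y)$. Using (A) at stage $k-1$, this subtree distribution depends on $y$ only through $\la_{k-1}(y)$, so by Poisson colouring the multiset of child subtrees is a Poisson process on the countable set of height-$(k{-}1)$ rooted trees with intensity
\[
 \gamma_\nu(S) \;=\; \int \pi_{k-1}^{\la}(\ell)(S)\,d\nu(\ell), \qquad \nu=\la_k(x).
\]
Hence $\pi_k(x)$ depends only on $\la_k(x)$, giving (A) at stage $k$, and averaging over $x\sim\mu$ shows $\La_k$ determines the distribution of $T_k$, settling one direction of the lemma.

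For the reverse direction, in the finite-type case $\la_k$ takes only finitely many values $\nu^{(1)},\ldots,\nu^{(m)}$ with respective probabilities $q_1,\ldots,q_m$, and the joint probability generating function of the multiplicities $N_S$ of the children's subtrees (which determine $T_k$) is
\[
 G(z) \;=\; \sum_{i=1}^m q_i\,e^{-M_i}\exp\!\Big(\sum_S \gamma_{\nu^{(i)}}(S)\,z_S\Big),
\]
with $M_i=|\nu^{(i)}|=\la_1(x)$ for any $x$ with $\la_k(x)=\nu^{(i)}$. Since exponentials of distinct linear forms are linearly independent (evaluate along a generic direction supported on finitely many coordinates and invoke the classical fact for distinct real exponents), the distinct forms $L_i(z)=\sum_S\gamma_{\nu^{(i)}}(S)z_S$ and the coefficients $q_ie^{-M_i}$, hence the vectors $\gamma_{\nu^{(i)}}$ and the $q_i$ themselves, can be read off from $G$. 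By (B) at stage $k-1$, the distributions $\pi_{k-1}^{\la}(\ell)$ for distinct $\ell$ are linearly independent, which is precisely the assertion that $\nu\mapsto\gamma_\nu$ is injective on the finite-dimensional space of measures on possible $\ell$-values; thus we recover the $\nu^{(i)}$ themselves, and so $\La_k$. The same exponential-independence principle applied once more shows that $\sum_i c_i\pi_k^{\la}(\nu^{(i)})=0$ forces all $c_i=0$, yielding (B) at stage $k$ and closing the induction.

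The main obstacle I anticipate is precisely the need to carry (B) through the induction: the bare statement of the lemma is not obviously strong enough to support its own induction, because inverting the Poisson mixture requires not merely that the distributions $\pi_{k-1}^{\la}(\ell)$ be distinct but that they be linearly independent. The two appeals to linear-independence of exponentials---once at the level of the PGF to identify the mixture components, and once at the offspring-distribution level via (B) at $k-1$ to invert $\nu\mapsto\gamma_\nu$---must be kept carefully separate, and one has to verify cleanly that distinct $\nu^{(i)}$'s produce genuinely distinct linear forms $L_i$ before quoting the exponential-independence fact.
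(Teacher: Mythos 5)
Your proposal is correct in outline but takes a genuinely different route from the paper's. The paper strengthens the lemma in a different direction: instead of adding the pointwise assertion (A) and the linear-independence assertion (B), it generalizes the lemma to an arbitrary law $\mu_0$ for the root type (Lemma 5.7 there), keeping the statement ``$\La_k(\mu_0)$ and the law of $T_k(\mu_0)$ determine each other''. Part (i) is proved essentially as you do, via Poisson colouring. For the inverse direction the paper does not touch generating functions at all: it conditions on the root degree $d_0=N$ and lets $N\to\infty$. By the law of large numbers the empirical distribution of the $N$ branches then concentrates at the single point $T_{k-1}(\mut_x)$, while the conditional law of the root type concentrates on $\{x:\la_1(x)=\lamax\}$; this lets one read off the (finitely many) distributions $T_{k-1}(\mut_x)$ with $\la_1(x)=\lamax$ and their weights, apply the inductive hypothesis to recover the corresponding $\La_{k-1}(\mut_x)$, hence $\La_k$ restricted to the top $\la_1$-level, subtract it off via part (i), and iterate down through the $\la_1$-levels. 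Your PGF decomposition via linear independence of exponentials achieves the same unique-decomposition of the Poisson mixture by an algebraic rather than an asymptotic/probabilistic device; it has the advantage of identifying all mixture components at once rather than peeling them off level by level, at the cost of needing the auxiliary linear-independence hypothesis (B), which the paper's high-degree conditioning sidesteps entirely.

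One point in your argument that needs to be made explicit: assertion (B) as you state it is a property of a single kernel $\ka$, but when one asks whether the law of $T_k$ determines $\La_k$ one is implicitly comparing two kernels $\ka$ and $\ka'$ with the same $T_k$-law, so to invert $\nu\mapsto\gamma_\nu$ one needs linear independence of $\{\pi_{k-1}^{\la}(\ell)\}$ over the \emph{union} of the two kernels' sets of $\la_{k-1}$-values, not over each set separately. This is easily repaired: one can pass to a single finite-type kernel on $\sss\sqcup\sss'$ whose restriction to each piece reproduces the original branching processes (scale the kernel by $2$ and the measure by $1/2$ on each piece, with the off-diagonal blocks zero) and apply the within-kernel version of (B); or, equivalently, state (B) as a universal assertion over all finite-type kernels from the outset. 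A similar remark applies to (A): what is used is that the map $\ell\mapsto\pi^{\la}_{k-1}(\ell)$ is well-defined independently of the kernel, which your part-(i) argument in fact gives. With these two small clarifications the proof closes correctly.
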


The restriction to finite-type kernels is presumably not needed here, but simplifies
the proofs, avoiding any possible difficulties associated to choosing the right
notion of convergence. Note that we have already proved the case $k=1$.

Before proving Lemma~\ref{l_claim1}, let us show that Theorem~\ref{bpdist} does indeed follow.

\begin{proof}[Proof of Theorem~\ref{bpdist}]
If $\ka_1\prec\kac$ and $\ka_2\prec\kac$, then $\pi_{\ka_1}=\pi_{\kac}=\pi_{\ka_2}$;
the content of the theorem is the reverse implication.
We shall show that if $\ka$ is a finite-type kernel, then one can
define a (finite-type) kernel $\kac$ with $\ka\prec\kac$
in a way that depends only on $\pi_\ka$;
this clearly implies the result.

Given a finite-type kernel $\ka$ on $(\sss,\mu)$,
define an equivalence relation $\sim$
on $\sss$ by $x\sim y$ if $\la_k(x)=\la_k(y)$ for every $k$.
If $x\not\sim y$, then there is some smallest
$k=k(x,y)$ such that $\la_k(x)\ne \la_k(y)$.
Let $K$ be an upper bound on the set $\{k(x,y):x,y\in \sss, x\not\sim y\}$,
which exists since $\sss$ is finite.
Since $\la_{k+1}(x)$ determines $\la_k(x)$, we have $\la_K(x)\ne\la_K(y)$
whenever $x\not\sim y$,
so
\begin{equation}\label{iff}
 x\sim y \iff \la_K(x)=\la_K(y) \iff \la_{K+1}(x)=\la_{K+1}(y).
\end{equation}
Note that $K$ is determined by the set $\La_k$, $k=0,1,2,\ldots$:
we may take $K$ to be the smallest integer such that the distribution
of $\la_{K+1}$ (which then determines that of $\la_K$) has
property \eqref{iff}.

Let us define a type space $\sss_\cc$ whose elements are the possible values
of $\la_K(x)$, $x\in\sss$, and a corresponding $\mu_\cc$
given by the $\mu$-probability of the corresponding set of (equivalent)
types $x\in \sss$. Note that $\sss_\cc$ and $\mu_\cc$ depend
only on the distribution of $\la_K(x)$, and hence are determined
by $\La_{K+1}$. We map $\sss$ to $\sss_\cc$ in the obvious way;
this map is measure preserving by definition.
The distribution of $\la_K(y)$ over children $y$ of a particle
of type $x$ is determined by $\la_{K+1}(x)$, and hence,
from \eqref{iff}, by $\la_K(x)$. It follows that the distribution
of the $\sss_\cc$-types of the children depends only on the $\sss_\cc$-type
of $x$. This is exactly the statement that there is
a kernel $\ka_\cc$ on $(\sss_\cc,\mu_\cc)$ with $\ka\prec\ka_\cc$.
The kernel $\ka_\cc$ is determined by the distribution
of $\La_{K+1}(x)$, i.e., by $\La_{K+2}$. Hence, $\ka_\cc$
is determined by the sequence $\La_k$, $k=1,2,\ldots$.
Using Lemma~\ref{l_claim1} it follows that
$\ka_\cc$ is determined by the distribution
of $T_k$ for all $k$, and thus by $\pi_\ka$. As noted
above the existence of such a $\ka_\cc$ with $\ka\prec\ka_\cc$
suffices to prove the theorem.
\end{proof}

It remains to prove Lemma~\ref{l_claim1}. Note the lemma makes
two separate statements; in proving Theorem~\ref{bpdist} 
we only used one of these, that the distribution of $T_k$
determines that of $\La_k$. We shall prove Lemma~\ref{l_claim1} by induction;
for this we need both statements. In fact, to make the induction work,
we shall need to prove a little more.

Let $\ka$ be a kernel on the finite type-space $(\sss,\mu)$. The measure
$\mu$ plays two roles in the branching process $\bp_\ka$: it appears
in the distribution of the offspring of a particle, and also in the distribution
of the type of the initial particle. It will be convenient to generalize $\bp_\ka$ slightly
as follows: let $\mu_0$ be any probability measure on $\sss$, and let $\bp_\ka(\mu_0)$
be the branching process defined as $\bp_\ka$, but starting with a single particle
of type distributed according to $\mu_0$. Note that $\bp_\ka(\mu_0)$ depends
on $\mu$ as well as $\mu_0$, and that $\bp_\ka(\mu)=\bp_\ka$.

Let $\La_k(\mu_0)$ denote the distribution of $\la_k(x)$ when $x$ is chosen
randomly with distribution $\mu_0$, so $\La_k(\mu)=\La_k$.
Also, let $T_k(\mu_0)$
denote the random rooted tree obtained from the first $k$ generations
of $\bp_\ka(\mu_0)$ by forgetting the types of the particles.
The following lemma is slightly stronger than Lemma~\ref{l_claim1}, which
can be recovered by setting $\mu_0=\mu$.

\begin{lemma}\label{l_claim2}
Fix $k\ge 1$, let $\ka$ be a finite-type kernel on $(\sss,\mu)$,
and let $\mu_0$ be a probability
measure on $\sss$. Then
(i) the distribution $\La_k(\mu_0)$ determines the distribution
of $T_k(\mu_0)$, and
(ii) the distribution of $T_k(\mu_0)$ determines $\La_k(\mu_0)$.
\end{lemma}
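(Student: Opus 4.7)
The plan is to prove Lemma~\ref{l_claim2} by induction on $k$, with the base $k=1$ covered by the mixed-Poisson identification already discussed before the statement. To close the induction I would carry along a third auxiliary statement: writing $V_k=\{\la_k(x):x\in\sss\}$ for the (finite) set of distinct values taken by $\la_k$ and $P^{(k)}_\la$ for the common law of $T_k(\delta_y)$ over all $y$ with $\la_k(y)=\la$, the extra claim is
\smallskip

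(iii) the finite family $\{P^{(k)}_\la\}_{\la\in V_k}$ is linearly independent as signed measures on $\Tr_k$.
\smallskip

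\noindent For $k=1$ this is immediate: the $P^{(1)}_\la$ are Poisson distributions with distinct means, and their generating functions $e^{\la(s-1)}$ are linearly independent.

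The structural input to the inductive step is that, conditional on the root having type $y$, the multiset of subtrees hanging off the root in $T_k(\delta_y)$ is a Poisson point process on $\Tr_{k-1}$ with intensity
\[
 R_k(\la_k(y))\;=\;\sum_{\mu\in V_{k-1}}\la_k(y)(\mu)\,P^{(k-1)}_\mu,
\]
viewing $\la_k(y)$ as a finitely supported measure on $V_{k-1}$. This follows from the marking/pushforward property of Poisson processes together with inductive (i), which says the subtree rooted at a type-$y'$ child is distributed as $P^{(k-1)}_{\la_{k-1}(y')}$. Averaging over $y\sim\mu_0$, the law of $T_k(\mu_0)$ is a mixed Poisson point process with mixing measure $(R_k)_{*}\La_k(\mu_0)$ on the (finite) space of intensities, and this is manifestly a function of $\La_k(\mu_0)$; this proves (i) at level $k$.

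For (iii) at level $k$ I would take a hypothetical relation $\sum_\la c_\la P^{(k)}_\la=0$ and test it against products of the form $\prod_i g(X_i)$, where $X_i$ runs over the children-subtrees and $g\colon\Tr_{k-1}\to[0,1]$ is arbitrary, yielding
\[
 \sum_{\la\in V_k} c_\la \exp\!\Bigl(\sum_{\mu\in V_{k-1}}\la(\mu)\,u_\mu(g)\Bigr)=0,\qquad u_\mu(g)\;=\;\int(g-1)\,\dd P^{(k-1)}_\mu,
\]
for every such $g$. By inductive (iii), the functionals $g\mapsto u_\mu(g)$ are linearly independent, so as $g$ varies the vector $(u_\mu(g))_{\mu\in V_{k-1}}$ sweeps an open subset of $\RR^{V_{k-1}}$; linear independence of the distinct exponentials $u\mapsto e^{\la\cdot u}$ on any open set then forces $c_\la=0$ for every $\la\in V_k$. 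Applying the same vanishing to $R_k(\la_1)-R_k(\la_2)$ shows $R_k$ is injective.

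Finally, (ii) at level $k$ is the inversion of the mixed Poisson point process: the multiset of root-subtrees is a measurable function of $T_k(\mu_0)$, so its law --- a mixture over the finite family $\{R_k(\la)\}_{\la\in V_k}$ of distinct intensities with weights $\La_k(\mu_0)(\la)$ --- is determined. A one-variable version of the same distinct-exponentials argument (pick $g$ so that the scalars $\int(1-g)\,\dd R_k(\la)$ separate the finitely many $\la$) recovers the pushforward $(R_k)_{*}\La_k(\mu_0)$, and injectivity of $R_k$ then recovers $\La_k(\mu_0)$ itself. The main obstacle is the auxiliary (iii): the induction really needs \emph{linear independence} of the previous level's distributions, not mere distinctness, since only the former keeps the vector $(u_\mu(g))_\mu$ full-dimensional and permits the exponentials-on-an-open-set step that powers both (iii) and the inversion in (ii).
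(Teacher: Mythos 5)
Your proposal is correct, and while part~(i) follows essentially the same route as the paper (condition on the root's type, use the marking property of Poisson processes to see the children's subtrees as a Poisson point process with intensity determined by $\la_k$ of the root), part~(ii) takes a genuinely different route. The paper does \emph{not} strengthen the induction hypothesis: instead it conditions on the root degree $d_0=N$, notes that as $N\to\infty$ the conditional distribution of the root's type concentrates on types with maximal $\la$-value $\lamax$, and uses the law of large numbers for the empirical distribution of the $N$ branches to isolate the trees $T_{k-1}(\mut_x)$ for $x$ with $\la(x)=\lamax$. It then invokes the inductive hypothesis to recover the corresponding $\La_{k-1}(\mu_x)$, subtracts, and peels off the remaining $\la$-values one level at a time. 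You instead invert the mixed Poisson point process directly via its Laplace functional $g\mapsto \E\prod_i g(X_i)=\sum_\la\La_k(\mu_0)(\la)\exp\bigl\langle\la,u(g)\bigr\rangle$, and to make the inversion work you carry the extra inductive hypothesis~(iii), the linear independence of the conditional laws $\{P^{(k)}_\la\}$; that independence is what guarantees $g\mapsto u(g)$ sweeps an open subset of $\RR^{V_{k-1}}$, and analytic linear independence of distinct exponentials on an open set then pins down the coefficients. Both proofs are valid. The paper's version is shorter because it piggybacks on the degree asymptotics and needs no independence statement, at the cost of an asymmetric ``largest-$\la$-first'' peeling argument; yours is more uniform (it treats all $\la$-values at once), requires explicitly proving that $R_k$ is injective, and makes essential a point the paper never needs to confront --- that mere distinctness of the $P^{(k-1)}_\mu$ is not enough, one genuinely needs full linear independence to keep $(u_\mu(g))_\mu$ full-dimensional. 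In either approach the technical flesh is minor (openness of the image of $g\mapsto u(g)$; passing from a relation on an open set to vanishing coefficients by real-analyticity), so your strategy is a sound alternative.
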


\begin{proof}
We start with the easier statement, namely part (i), which we prove by induction on $k$;
as noted above, the case $k=1$ is trivial: $T_1$ is a star the degree
of whose root has the mixed Poisson distribution $\Po(\La_1(\mu_0))$.

Suppose then that $k\ge 2$ and that (i) holds with $k$ replaced by $k-1$.
It is easy to see that it suffices to prove (i) with $\mu_0$ concentrated on a
single type
$x$, in which case $\La_k(\mu_0)=\la_k(x)$.
Let us fix the type $x\in\sss$ of the root,
writing $\bp_\ka(x)=\bp_\ka(\delta_x)$ for the branching process $\bp_\ka$
started with a single particle of type $x$.

Let $X_1$ denote the first generation of $\bp_\ka(x)$.
Given $X_1$, the descendants of a particle $v$ in $X_1$ have the distribution
of $\bp_\ka(y)$, where $y$ is the type of $v$, and the subtrees corresponding
to different $v$ are independent. By induction, the distribution
of the first $k-1$ generations of the descendants of $v$ are determined
by $\la_{k-1}(y)$. Hence, given $X_1$, the conditional distribution
of $T_k$ depends only on the multiset $M=\{\la_{k-1}(y)\}$, where $y$
runs over the types in $X_1$.
Given the type $x$ of the root,
the types of the particles in $X_1$ form a Poisson process
on $\sss$ with intensity measure $\mu_x$.
Hence, $M$ is a Poisson process
on the appropriate space of distributions with intensity
measure $\la_k(x)$. In particular, the distribution of $M$,
and hence that of $T_k$, is determined by $\la_k(x)$, completing the proof of part (i)
by induction.

We now turn to the reverse implication, part (ii), which we again
prove by induction. For $k=1$ the argument is as before: the degree of the root
in $T_1(\mu_0)$ has a mixed Poisson distribution $\Po(\La_1(\mu_0))$,
which determines $\La_1(\mu_0)$.
Suppose then that $k\ge 2$, and that (ii) holds with $k$ replaced by $k-1$,
i.e., that for any probability measure $\mu_0'$ on $\sss$, the distribution
of $T_{k-1}(\mu_0')$ determines $\La_{k-1}(\mu_0')$.
This time we cannot simply condition on the type of the root,
as we are given the distribution of $T_k$ without types.
It is here that we shall use the fact that $\sss$ is finite.

For any realization $T$ of the random tree $T_k(\mu_0)$,
let $\D(T)$
denote the empirical distribution of the branches of $T$,
i.e., the subtrees
of $T$ of height $k-1$ rooted at the neighbours of the root
of $T$. Thus $\D(T)$ is a distribution on $\Tr_{k-1}$,
the set of rooted trees of height $k-1$, and the weight
$\D(T)$ assigns to a tree $T'$ is just the number of branches
of $T$ that are isomorphic to $T'$ divided by the total
number of branches.
Let $\tau_0$ denote the type of the root of $\bp_\ka(\mu_0)$,
and $d_0=|X_1|$ its degree.
Given that $\tau_0=x$ and $d_0=N$, the types
of the particles in $X_1$ are independent with the distribution
$\mut_x=\mu_x/\mu_x(\sss)=\mu_x/\la(x)$, the normalized version of $\mu_x$.
Hence their descendants are independent copies of the
branching process $\bp_\ka(\mut_x)$, and
the corresponding branches are $N$ independent samples
from the distribution $T_{k-1}(\mut_x)$.
Let us view the empirical distribution $\D(T_k(\mu_0))$
as a point $\vv$ in the space $[0,1]^{\Tr_{k-1}}$ taken
with the supremum norm. This point is random, since
it depends on the realization of $\bp_\ka(\mu_0)$.
From the law of large numbers and the comments above,
as $N\to\infty$ the random point $\vv$ obtained
after conditioning on $\tau_0=x$ and $d_0=N$ 
converges in probability to the single point $T_{k-1}(\mut_x)\in [0,1]^{\Tr_{k-1}}$.

Since all we are given is the distribution of $T_k(\mu_0)$, we cannot condition
on the type of the root. We can however condition on its degree, $d_0=|X_1|$.
Let $\lamax$ be the largest value of $\la(x)$ for $x\in\sss$ with
$\mu_0(x)>0$.
As $N\to\infty$,
\[
 \Pr(\tau_0=x\mid d_0=N) \to p_x = \mu_0(x)/\mu_0(\{y:\la(y)=\lamax\})
\]
if $\la(x)=\lamax$, and to zero otherwise.
From the comments above, as $N\to\infty$,
the distribution of $\vv=\D(T_k(\mu_0))$ given that $d_0=N$
tends to the discrete distribution taking each value
$T_{k-1}(\mut_x)$ with probability $p_x$. Hence,
the distribution $T_k(\mu_0)$ determines what
distributions are possible for $T_{k-1}(\mut_x)$ with $\la(x)=\lamax$,
and the probability of each (a sum of $p_{x'}$
over $x'$ such that $T_{k-1}(\mut_{x'})=T_{k-1}(\mut_x)$).

By induction, for each possible distribution $T_{k-1}(\mut_x)$
there is a unique corresponding distribution $\La_{k-1}(\mut_x)$.
Since we know $\lamax$, and the measure $\La_{k-1}(\mu_x)$ is simply obtained
by multiplying the probability measure $\La_{k-1}(\mut_x)$
by the constant factor $\lamax$, we recover the possible distributions
$\La_{k-1}(\mu_x)$ for $x$ with $\la(x)=\lamax$, and the $p$-probability
of each. In other words, we recover the distribution $\La_k(p)$
where $p$ is the probability measure defined by $p(\{x\})=p_x$.

By part (i),
the distribution $\La_k(p)$ determines the distribution
of $T_k(p)$, which is simply that of
$T_k(\mu_0)$ conditioned on $\tau_0$ lying in the set $\la(\tau_0)=\lamax$.
Since we recover this distribution {\em exactly}, we also
recover the conditional distribution of $T_k(\mu_0)$ given that 
$\la(\tau_0)<\lamax$, i.e., we recover the distribution
$T_k(\mu_0')$, where $\mu_0'$ is the distribution
$\mu_0$ conditioned on $\la(\cdot)<\lamax$.
Repeating the argument above, we can recover the part of $\La_k(\mu_0')$
coming from the largest possible $\la$-value of the root,
i.e., the part of $\La_k(\mu_0)$ coming form the second largest value, and so
on. This shows that the distribution of $T_k(\mu_0)$ does determine
that of $\La_k(\mu_0)$, completing the proof of (ii) by induction.
\end{proof}

Theorem~\ref{bpdist} shows that there are many examples of different
kernels that give rise to the same branching process, and hence to the
same distribution of tree counts in the corresponding random graphs
$G_{1/n}(n,\ka)$. One extremely special
case concerns homogeneous kernels: we say that $\ka$
is {\em homogeneous with degree $c$} if
$\int_y \ka(x,y)\dd y=c$ for almost every $x$.
In this case, $\bp_\ka$ seen without types becomes
a standard single-type Galton--Watson branching process $\bp_c$
in which each particle has a Poisson number of children with mean $c$.
Writing $c$ also for the constant kernel taking the value $c$,
Theorem~\ref{bpdist} shows that $\pi_\ka=\pi_c$
if and only if $\ka$ is homogeneous with degree $c$.
(This special case is essentially trivial, however: one need
consider only the first generation of the branching process.)

In the denser case, i.e., whenever
$np\to\infty$, Lemma~4.10 of~\cite{BRsparse} tells us that
the sequence $G_p(n,\ka)$ converges to $\ka$ in the
cut metric with probability~$1$, so it follows that the models
$G_p(n,\ka_1)$ and $G_p(n,\ka_2)$ are genuinely different unless
$\dc(\ka_1,\ka_2)=0$, i.e.,
unless $\ka_1$ and $\ka_2$ are equivalent
in the sense of \ssequiv, in which case the models
are trivially the same.
When $p=1/n$, we have seen that there are many pairs
$(\ka_1,\ka_2)$ of kernels where the random graphs
$G_{1/n}(n,\ka_1)$ and $G_{1/n}(n,\ka_2)$ are
presumably different, but are not distinguished by their
tree counts. This suggests that we need better methods of
distinguishing very sparse graphs.
However, as we shall see in this next sections, while this is true, the
question of which pairs of kernels give rise to equivalent models is not so
easy.

\section{The partition metric}\label{ss_par}
In the spirit of the rest of the paper, we say that two graphs with $n$
vertices are {\em essentially the same} if one can be changed
into a graph isomorphic to the other by adding and deleting
$o(pn^2)$ edges, where $p=p(n)$ is our normalizing function,
as usual. (Of course, the definition makes formal sense
only for two sequences.) Otherwise, they are {\em essentially different}.
In all previous sections, graphs that were essentially the same were
treated as equivalent, in the sense that their distance in any of the
metrics we considered tends to zero.

Let $p=1/n$, and let $\ka$ be a kernel whose corresponding branching
process always dies out. In the notation of Bollob\'as, Janson
and Riordan~\cite{BJR}, we assume that the operator $T_\ka$
corresponding to the kernel $\ka$ satisfies $\norm{T_\ka}\le 1$,
i.e., $\ka$ is {\em (weakly) subcritical}.
From the results in~\cite{BJR}, almost all vertices of $G_{1/n}(n,\ka)$
are in small tree components: more precisely, given any $\eps>0$,
there is a $K$ such that, whp, all but at most $\eps n$ vertices
of $G_{1/n}(n,\ka)$ are in tree components with size at most $K$.
Furthermore, the asymptotic number of copies of a given tree
$T$ in $G_{1/n}(n,\ka)$ is determined by the probability of $T$
in the distribution $\pi_\ka$.
It follows that if $\ka_1$ and $\ka_2$ are subcritical
kernels, then $G_{1/n}(n,\ka_1)$ and $G_{1/n}(n,\ka_2)$ are (whp)
essentially the same if and only if $\pi_{\ka_1}=\pi_{\ka_2}$.
Hence, in the subcritical case, the random graph $G_{1/n}(n,\ka)$
depends only on the branching process $\bp_\ka$. Of course,
this rather trivial observation does not extend to the supercritical case.

Given two real numbers $a,b\ge 0$, let $\ka_{a,b}$ denote the $2$-by-$2$
`chessboard' kernel defined as follows:
\begin{equation}\label{cbk}
 \ka_{a,b}(x,y) = \left\{
\begin{array}{ll}
  a & \hbox{if } x<1/2,\  y<1/2 \hbox{ or } x\ge 1/2,\  y\ge 1/2,\\
  b & \hbox{otherwise.}
\end{array}\right .
\end{equation}
To form the random graph $G_p(n,\ka_{a,b})$, we partition the vertex set randomly
into two parts, and then take each cross-edge to be present with probability $bp$,
and each other edge with probability $ap$. 
Note that $\ka_{a,b}$ is homogeneous with constant $(a+b)/2$.
Also, if $a=b$, then $\ka_{a,b}$ is simply the constant kernel
taking the value $a=b$.

For $p=1/n$, perhaps the simplest example of two sequences of essentially
different graphs not distinguished by their
tree counts is given by the random graphs $G_{1/n}(n,\ka_{2,2})$ and $G_{1/n}(n,\ka_{4,0})$,
i.e., the usual Erd\H os--R\'enyi random graph $G(n,2/n)$ and (essentially)
the random bipartite graph $G(n/2,n/2;4/n)$.
How do we know that these graphs are different? For the obvious reason that
one is bipartite, with almost equal vertex classes,
while the other is not. Indeed,
the smallest balanced cut in $G(n,2/n)$ has size
of order $\Theta(n)$: this follows, for example,
from the result of Luczak and McDiarmid~\cite{LMcD}
that removing $o(n)$ edges from the giant component of $G(n,c/n)$, $c>1$,
leaves a connected component with only $o(n)$ fewer vertices than
the original giant. Note that one has to be a little careful here:
writing $\rho(c)$ for the largest solution to $\rho=1-e^{-c\rho}$,
so $\rho(c)n$ is the typical size of the giant component in $G(n,c/n)$,
we need $\rho(c)>1/2$; otherwise, it is easy to construct
a balanced cut with $o(n)$ edges across it.
Note that both $G(n,2/n)$ and $G(n/2,n/2;4/n)$ have balanced
cuts with a range of sizes: the difference between the two graphs
can be seen in the difference between these ranges.

In the discussion above we considered balanced partitions
of the vertex set of a graph into two pieces. Of course, 
it makes sense to consider any fixed number $k$ of pieces; this
leads us to consider the {\em partition metric} $\dP$
defined in~\cite{BRsparse} for any normalizing
function $p=p(n)$, but in fact motivated by the present case.
Let us recall the definitions from~\cite{BRsparse},
writing $d_p(U,W)$ for the normalized density
of edges from $U$ to $W$ in $G_n$ as in~\eqref{dpdef}.

Fix throughout a normalizing function $p=p(n)$ and a constant $C>0$;
we shall only consider graphs $G_n$ with $n$ vertices and at most
$Cpn^2/2$ edges.

Given a graph $G_n$ with $|G|=n$ and $e(G)\le Cpn^2/2$,
for each partition $\Pi=(P_1,\ldots,P_k)$ of $V(G_n)$
into non-empty parts let
$M_\Pi(G_n)=(d_p(P_i,P_j))_{1\le i,j\le k}$ be the matrix encoding
the normalized densities of edges between the parts of $\Pi$.
Since $M_\Pi(G_n)$ is symmetric, we may think of this
matrix as an element of $\R^{k(k+1)/2}$.
For $2\le k\le n$, let
\[
 \M_k(G_n) = \{ M_\Pi(G_n) \} \subset \R^{k(k+1)/2},
\]
where $\Pi$ runs over all balanced partitions of $V(G_n)$ into
$k$ parts, i.e., all partitions $(P_1,\ldots,P_k)$ with $|P_i|-|P_j|\le 1$.

Recall that $e(G_n)\le Cpn^2/2$, so $e(U,W)\le e(V(G_n),V(G_n))=2e(G_n) \le Cpn^2$.
Since each part of a balanced
partition has size at least $n/(2k)$, the entries of
any $M_\Pi(G_n)\in \M_k(G_n)$ are thus bounded by $C_k=(2k)^2C$,
and $\M_k(G_n)$ is a subset of the compact space $\Mk=[0,C_k]^{k(k+1)/2}$.

As in~\cite{BRsparse},
let $\C_0(\Mk)$ denote the set of non-empty compact subsets of $\Mk$,
and let $\dH$ be the Hausdorff metric on $\C_0(\Mk)$,
defined with respect to the $\ell_\infty$ distance, say.
Thus
\[
 \dH(X,Y)=\inf \{ \eps>0: X^{(\eps)}\supset Y,\ Y^{(\eps)}\supset X\},
\]
where $X^{(\eps)}$ denotes the $\eps$-neighbourhood of $X$ in the $\ell_\infty$ metric.
Since $(\Mk,\ell_\infty)$ is compact, by standard results (see, for example, Dugundji~\cite[p.\ 253]{Dugundji}),
the space $(\C_0(\Mk),\dH)$ is compact.
For technical reasons we add the empty set to $\C_0(\Mk)$, considering
$\C(\Mk)=\C_0(\Mk)\cup\{\emptyset\}$,
setting $\dH(\emptyset,X)=C_k$, say, for any $X\in \C(\Mk)$, so
the empty set is an isolated point in the compact metric space $(\C(\Mk),\dH)$.

Finally, let $\C=\prod_{k\ge 2} \C(\Mk)$, and let 
$\M:\F\mapsto \C$ be the map defined by
\[
 \M(G_n) = (\M_k(G_n))_{k=2}^\infty
\]
for every graph $G_n$ with $n$ vertices and at most $Cpn^2/2$ edges,
noting that $\M_k(G_n)$ is empty if $k > n$.
Then we may define the {\em partition metric} $\dP$ by
\[
 \dP(G,G') = d(\M(G),\M(G')),
\]
where $d$ is any metric
on $\C$ giving rise to the product topology.

As noted in~\cite{BRsparse},
$\dP$ is indeed a metric on the set $\F$ of isomorphism classes of finite graphs.
Also, since each space $(\C(\Mk),\dH)$ is compact,
$(G_n)$ is Cauchy with respect to $\dP$ if and only if
there are non-empty compact sets $Y_k\subset \Mk$ such that
$\dH(\M_k(G_n),Y_k)\to 0$ for each $k$.
In particular, convergence in $\dP$ is equivalent to convergence
of the set of partition matrices for each fixed $k$.

In the dense case, a metric equivalent to $\dP$ was introduced
independently by Borgs, Chayes, Lov\'asz, S\'os and Vesztergombi~\cite{BCLSV:2}.

The definitions above may appear rather unnatural: the set $\M_k(G_n)$
of possible density matrices is perhaps more naturally seen as a multiset,
with one element for each of the $N_{n,k}$ balanced partitions
of $[n]$ into $k$ (ordered) parts; the Hausdorff metric ignores
the multiplicities of the points of these sets.
For multisets $S$, $S'$ in a metric space $(X,d)$ with $|S|=|S'|=N$,
(a version of) their {\em matching distance} is given by
\begin{equation}\label{dMdef}
 \dM(S,S') = \min_\phi \max_{x\in S} d(x,\phi(x)),
\end{equation}
where $\phi$ runs over all {\em bijections} between $S$ and $S'$ (as multisets).
In other words, we pair up the points of $S$ with those of $S'$
and measure the maximum distance between corresponding points, minimized
over pairings.
For graphs $G_n$, $G_n'$ with the same number of vertices, it may make
sense to use $\dM$ instead of $\dH$ to measure the distance between
$\M_k(G_n)$ and $\M_k(G_n')$, defining the {\em partition matching
distance} $\dPM(G_n,G_n')$ accordingly.

In fact, $\dM$ can easily be extended to multisets $S$, $S'$ with different
numbers of elements: simply replace each point of $S$ by $|S'|$
points, and each point of $S'$ by $|S|$ points, then compute
the matching distance. In other words, minimize over `fractional bijections'
$\phi$ above; if $|S|=|S'|$ this does not change the minimum distance.
Extending $\dM$ in this way, we could thus define $\dPM(G,G')$
for any two graphs $G$, $G'$. However, the resulting
metric behaves much less well than $\dP$: for example, not all
(sparse) sequences will have subsequences that are Cauchy, in contrast
to $\dP$, which, as noted above, is defined via a metric on a compact space.
Even worse, it is easy to see that the sequence $(G(n,c/n))$ will not converge
in the partition matching metric.

The matching distance and the Hausdorff distance share what might
appear to be an undesired property: they are strongly influenced by atypical
partitions $\Pi$. Surely, for multisets, it would be more natural
to weight points by their multiplicity, replacing \eqref{dMdef} by
\[
 \dM'(S,S')=\min_\phi \frac{1}{|S|} \sum_{x\in S} d(x,\phi(x)).
\]
Recall, however, that our main aim in introducing the partition
metric is to distinguish in a sensible way such pairs of graphs as the
uniform random graph $G_n=G(n,2/n)$ and the random bipartite graph
$G_n'=G(n/2,n/2;4/n)$. It is very easy to see that for almost all partitions
of $V(G_n')$ into a fixed number $k$ of parts, each part contains
almost equal numbers of vertices from the two vertex classes of $G_n'$.
It follows that almost all (in the multiset sense) density matrices
arising from $G_n'$ are very close to each other and to almost
all density matrices arising from $G_n$. The whole point of the partition
metric is that it accords significant weight to atypical partitions,
in particular, to the partition of $G_n'$ corresponding to its two
vertex classes. For this reason we now return to considering
$\M_k(G_n)$ as a set rather than a multiset, and stay
with the definition of $\dP$ above based on the Hausdorff metric.

As noted in~\cite{BRsparse},
we may extend the map $\M$, and hence $\dP$, to kernels in a natural way: instead
of partitioning the vertex set into $k$ almost equal parts, we partition
$[0,1]$ into $k$ exactly equal parts. We omit the details, noting
only that as shown by Borgs, Chayes, Lov\'asz, S\'os and Vesztergombi~\cite{BCLSV:2},
one should take the closure of the resulting set
of density matrices, which need not itself be closed; see their Example 4.4.

As for the cut metric, it is easy to check that it makes little difference
whether we define $\dP$ for graphs directly, or by going via kernels;
the next lemma is from \cite[Section 6]{BRsparse}.

\begin{lemma}\label{dPsame}
Let $p=p(n)$ satisfy $p\ge 1/n$, and let $(G_n)$ be a sequence
of graphs with $e(G_n)=O(pn^2)$ and $\Delta(G_n)=o(pn^2)$.
Then $\dP(G_n,\ka_{G_n})\to 0$ as $n\to\infty$.\noproof
\end{lemma}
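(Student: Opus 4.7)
Since $\dP$ is defined via a product of Hausdorff distances on compact spaces, $\dP(G_n,\ka_{G_n})\to 0$ reduces to showing $\dH(\M_k(G_n),\M_k(\ka_{G_n}))\to 0$ for each fixed $k\ge 2$. So I fix $k$ and prove the two required set containments, each up to an $\eps$-neighbourhood.

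The easy direction embeds graph density matrices into kernel density matrices. Given a balanced partition $(P_1,\ldots,P_k)$ of $V(G_n)$, set $\tilde Q_i=\bigcup_{u\in P_i}I_u\subset[0,1]$: the kernel density matrix of this interval-aligned partition (with $|\tilde Q_i|=|P_i|/n$) agrees exactly with the graph density matrix, since $\ka_{G_n}$ is constant on each $I_u\times I_w$. To obtain an exactly equal partition $(Q_1,\ldots,Q_k)$, I perturb each $\tilde Q_i$ by total Lebesgue measure at most $k/n$. The elementary identity $\int_{A\times[0,1]}\ka_{G_n}=(pn)^{-1}\sum_u|A\cap I_u|\,d_{G_n}(u)\le|A|\Delta/(pn)$ bounds the resulting change in each entry $\int_{Q_i\times Q_j}\ka_{G_n}$ by $O(k\Delta/(pn^2))$; after dividing by $|Q_i||Q_j|=1/k^2$, the density entries move by $O(k^3\Delta/(pn^2))=o(1)$ since $\Delta=o(pn^2)$.

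The harder direction approximates each kernel density matrix $Y$ (arising from an equal partition $(Q_1,\ldots,Q_k)$) by a graph density matrix. Set $q_{u,i}=n|I_u\cap Q_i|$, so that $\sum_i q_{u,i}=1$ and $\sum_u q_{u,i}=n/k$, and round randomly, assigning each vertex $u$ to part $i$ independently with probability $q_{u,i}$. An edgewise computation gives $\E[e_{G_n}(P_i,P_j)]=pn^2\int_{Q_i\times Q_j}\ka_{G_n}$, which matches $Y_{ij}$ after normalization by $p|P_i||P_j|\approx pn^2/k^2$. Covariances between edge indicators vanish unless the two edges share a vertex, so $\Var(e_{G_n}(P_i,P_j))=O(e(G_n)+\sum_v d(v)^2)=O(\Delta\, pn^2)$. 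By Chebyshev and a union bound over the finitely many entries and part sizes, with positive probability the rounding satisfies $|d_p(P_i,P_j)-Y_{ij}|<\eps/2$ for all $i,j$ and $||P_i|-n/k|=O(\sqrt n)$; the hypothesis $\Delta=o(pn^2)$ is exactly what forces the relative variance $k^4\Delta/(pn^2)$ to vanish.

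This rounded partition need not be balanced, so the final step is rebalancing by moving excess vertices, always choosing the \emph{lowest-degree} vertices from each over-sized part. The averaging bound $\sum_{\text{lowest }m\text{ in }P_i}d(v)\le(m/|P_i|)\sum_{v\in P_i}d(v)$ implies that moving $O(\sqrt{n/k})$ such vertices per part has total moved degree $O(k^{3/2}pn^{3/2})$, so each entry $e_{G_n}(P_i,P_j)$ shifts by the same amount, and hence each density entry by $O(k^{7/2}/\sqrt n)=o(1)$ for fixed $k$. The main technical delicacy is that the hypothesis $\Delta=o(pn^2)$ is tight in both places: it yields the needed variance bound, and the degree-biased rebalancing (rather than arbitrary vertex moves) uses only the cruder bound $e(G_n)=O(pn^2)$ at the end, thereby avoiding the stronger hypothesis $\Delta=o(pn^{3/2})$ that a naive rebalancing would require.
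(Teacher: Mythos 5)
Your proof is correct and self-contained; the paper defers the proof to~\cite{BRsparse} (\S6), so I cannot compare line-by-line, but the strategy you use --- interval-aligned partitions with a small measure perturbation for the containment $\M_k(G_n)\subset\M_k(\ka_{G_n})^{(\eps)}$, and a random rounding followed by a rebalancing step for the reverse containment --- is the natural argument and is essentially the standard one for lemmas of this type. The computations check out: the identity $\int_{A\times[0,1]}\ka_{G_n}=(pn)^{-1}\sum_u|A\cap I_u|\,d(u)\le|A|\Delta/(pn)$ is correct and gives the $O(k^3\Delta/(pn^2))=o(1)$ perturbation bound in the easy direction; in the hard direction the second-moment bound $\Var\bigl(e_{G_n}(P_i,P_j)\bigr)=O(\sum_v d(v)^2)=O(\Delta\,pn^2)$ is right, and it gives relative error $O\bigl(k^2\sqrt{\Delta/(pn^2)}\bigr)\to 0$ exactly under the hypothesis $\Delta=o(pn^2)$.

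The one genuinely clever point, which you correctly flag, is the rebalancing step: since the random rounding puts $n/k+O(\sqrt{n/k})$ vertices in each part (not exactly balanced), and $\dP$ for graphs requires exactly balanced partitions, you must move $O(\sqrt{n/k})$ vertices per part and control the effect on the edge counts. Moving \emph{arbitrary} vertices could shift each $e_{G_n}(P_i,P_j)$ by $O(\sqrt{nk}\,\Delta)$, which normalizes to $O(k^{5/2}\Delta/(pn^{3/2}))$ and would need the \emph{stronger} hypothesis $\Delta=o(pn^{3/2})$; choosing lowest-degree vertices lets you invoke the averaging bound to get total moved degree $O(\sqrt{k/n}\cdot e(G_n))=O(\sqrt{k}\,pn^{3/2})$, normalizing to $O(k^{5/2}/\sqrt n)=o(1)$ using only $e(G_n)=O(pn^2)$. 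This is a real observation and the hypotheses in the lemma are exactly what your argument needs.

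Two small points to tidy up in a final write-up: (1) when you perturb $\tilde Q_i$ to $Q_i$, note that both $\int_{Q_i\times Q_j}\ka$ and the normalizers $|Q_i||Q_j|$ change, so you should cite explicitly that the perturbation to the denominator (from $|P_i||P_j|/n^2$ to $1/k^2$) is also $o(1)$ relative; (2) for the reverse direction you should say explicitly that your $\eps$-approximation of every \emph{actual} equal-partition density matrix automatically covers the closure of that set, since $\M_k(G_n)$ is closed. Neither affects correctness.
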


Although the partition metric was defined in~\cite{BRsparse},
the real motivation for its study comes from the present
extremely sparse setting. Indeed, as shown in~\cite[Section 6]{BRsparse},
whenever $np\to\infty$ then (for sequences $G_n$
satisfying a mild additional condition) $\dc$ and $\dP$ are
essentially equivalent.

\begin{theorem}\label{dpdc}
Let $np\to\infty$, and let $(G_n)$ be a sequence of graphs with $|G_n|=n$
satisfying the bounded density assumption~\cite[Assumption 4.1]{BRsparse}.
Let $\ka$ be a bounded kernel. Then $\dP(G_n,\ka)\to 0$ if and only
if $\dc(G_n,\ka)\to 0$.\noproof
\end{theorem}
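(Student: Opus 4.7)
The plan is to prove the two implications separately; the forward direction follows directly from the definitions, while the reverse requires a Frieze--Kannan weak regularity argument together with the bounded density assumption.

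\textbf{Forward direction.} Suppose $\dc(G_n,\ka) \to 0$, so after rearranging we may assume $\cn{\ka_{G_n} - \ka^{(\tau_n)}} \to 0$. Fix $k \ge 2$ and a balanced partition $\Pi = (P_1,\ldots,P_k)$ of $V(G_n)$, identified with a partition of $[0,1]$ by grouping the corresponding $1/n$-intervals. Since
\[
 d_p(P_i,P_j) = \frac{n^2}{|P_i||P_j|}\int_{P_i \times P_j} \ka_{G_n},
\]
and $n^2/(|P_i||P_j|) \le 4k^2$, each entry of $M_\Pi(G_n)$ differs from the corresponding density of $\ka^{(\tau_n)}$ by at most $4k^2 \cn{\ka_{G_n} - \ka^{(\tau_n)}} = o(1)$. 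The latter densities equal those of $\ka$ on $\tau_n(\Pi)$; boundedness of $\ka$ makes rebalancing to exactly $k$ equal parts cost $O(k/n)$, exhibiting a nearby element of $\M_k(\ka)$. Running the symmetric argument from the $\ka$ side yields $\dH(\M_k(G_n),\M_k(\ka)) \to 0$ for every fixed $k$, hence $\dP(G_n,\ka) \to 0$.

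\textbf{Reverse direction.} Fix $\eps>0$. Applying the Frieze--Kannan weak regularity lemma to the bounded kernel $\ka$ gives $k_1=k_1(\eps)$ and a balanced $k_1$-partition $\Pi^\ka$ of $[0,1]$ with $\cn{\ka - \ka^{\Pi^\ka}} \le \eps$, where $\ka^{\Pi^\ka}$ averages $\ka$ on each cell. The bounded density assumption lets the same argument be run for $\ka_{G_n}$: for each large $n$ there is a balanced $k_2$-partition $\Pi_n^G$ of $V(G_n)$ with $\cn{\ka_{G_n} - \ka_{G_n}^{\Pi_n^G}} \le \eps$, where $k_2=k_2(\eps)$ is uniform in $n$. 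Set $k_3=k_1k_2$. At this scale, $\dP$-convergence yields balanced $k_3$-refinements $\widetilde\Pi_n^G$ of $\Pi_n^G$ and $\widetilde\Pi^\ka$ of $\Pi^\ka$, together with a measure-preserving bijection $\sigma_n$ identifying $\widetilde\Pi_n^G$ (as a partition of $[0,1]$) with $\widetilde\Pi^\ka$, such that the corresponding density matrices of $G_n$ and $\ka$ agree up to $\ell_\infty$-error $o(1)$. Since refinement preserves the F-K cut bound, the triangle inequality gives
\[
 \cn{\ka_{G_n} - \ka^{(\sigma_n)}} \le \cn{\ka_{G_n}-\ka_{G_n}^{\widetilde\Pi_n^G}} + \cn{\ka_{G_n}^{\widetilde\Pi_n^G}-(\ka^{(\sigma_n)})^{\widetilde\Pi_n^G}} + \cn{(\ka^{(\sigma_n)})^{\widetilde\Pi_n^G} - \ka^{(\sigma_n)}} \le 2\eps + o(1),
\]
in which the middle term is small because the two step functions have entries equal to the (now matching) density matrices, while the outer terms are small by the F-K bounds on the refined partitions. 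Since $\eps$ was arbitrary, $\dc(G_n,\ka) \to 0$.

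\textbf{Main obstacle.} The delicate point is producing a single partition that is simultaneously weak-regular for $\ka_{G_n}$ and for $\ka$ after identification via $\sigma_n$: the F-K partitions on the two sides are chosen independently, and $\dP$-convergence only matches density matrices, not partitions. A further subtlety is that $\ka_{G_n}$ takes the unbounded value $1/p$ on edges, so F-K cannot be applied to it naively; the bounded density assumption is exactly what allows removal of a small exceptional set and yields a uniform F-K approximation of $\ka_{G_n}$. The resolution is the common-refinement trick above, but carrying it out requires $\dP$-convergence not just at a fixed scale $k_3$, but restricted to balanced partitions simultaneously refining the F-K partitions on both sides; extracting this restricted matching from the ordinary Hausdorff convergence on $\M_{k_3}$, while preserving balance of the refinements and consistency with $\sigma_n$, is the most involved bookkeeping of the proof.
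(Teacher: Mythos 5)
The theorem is stated with a reference to \cite[Section 6]{BRsparse} and carries no proof in this paper, so I evaluate your attempt on its own merits.

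Your forward direction is essentially right: for a balanced $k$-partition $\Pi$ of $V(G_n)$, every entry of the density matrix of $G_n$ differs from that of $\ka^{(\tau_n)}$ by at most $4k^2\cn{\ka_{G_n}-\ka^{(\tau_n)}}=o(1)$; the rebalancing of the induced $[0,1]$-partition to exactly equal cells and, conversely, the rounding of a measurable balanced partition of $[0,1]$ to a vertex-respecting one are both $o(1)$ perturbations for fixed $k$, and each needs a short (randomized or greedy) argument rather than the bare ``$O(k/n)$'' you quote, but these are minor.

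The reverse direction contains a genuine gap. The crucial step asserts that $\dP$-convergence at scale $k_3=k_1k_2$ yields balanced $k_3$-\emph{refinements} of the Frieze--Kannan partitions $\Pi_n^G$ and $\Pi^\ka$ together with a measure-preserving identification $\sigma_n$ under which the density matrices agree. But Hausdorff closeness of $\M_{k_3}(G_n)$ and $\M_{k_3}(\ka)$ only pairs each density matrix arising from a balanced $k_3$-partition on one side with a nearby matrix from \emph{some} balanced $k_3$-partition on the other; it does not constrain the partner partition to refine a prescribed coarser partition, and it provides no correspondence between cells at all. Since $\Pi_n^G$ and $\Pi^\ka$ were chosen independently on the two sides, there is no mechanism in the hypothesis to relate them. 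You flag this as ``the most involved bookkeeping,'' but it is not bookkeeping: it is the heart of the matter, and without a new idea the two F--K partitions cannot be synchronized.

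The way this is resolved in \cite[Section 6]{BRsparse}, as in the corresponding dense result of Borgs, Chayes, Lov\'asz, S\'os and Vesztergombi, is an index (energy) increment argument that never invokes weak regularity independently on both sides. For each $k$ set
\[
 I_k(\cdot)=\sup_\Pi \frac{1}{k^2}\sum_{i,j} d_p(P_i,P_j)^2,
\]
the supremum over balanced $k$-partitions. For fixed $k$ this is a uniformly continuous functional of the compact set $\M_k$, so $\dP$-convergence immediately gives $I_k(G_n)\to I_k(\ka)$. Under the bounded density assumption the $I_k$ are uniformly bounded, and $I_k\le I_{mk}$ for every integer $m$, so they stabilize: for every $\eps>0$ there is a $k_0$ with $I_{4k_0}(\ka)-I_{k_0}(\ka)<\eps^2/16$, and by $\dP$-convergence the corresponding inequality (with $\eps^2/8$, say) eventually holds for $G_n$ as well. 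Let $\Pi_n$ be a balanced $k_0$-partition of $V(G_n)$ nearly attaining $I_{k_0}(G_n)$; the defect Cauchy--Schwarz (index-increment) lemma forces $\cn{\ka_{G_n}-\ka_{G_n}^{\Pi_n}}\le\eps$, where $\ka_{G_n}^{\Pi_n}$ is the step kernel with density matrix $M_{\Pi_n}(G_n)$. The Hausdorff bound at scale $k_0$ now supplies a balanced $k_0$-partition $\Pi_n'$ of $[0,1]$ whose $\ka$-density matrix is $o(1)$-close to $M_{\Pi_n}(G_n)$; its $\ka$-index is therefore nearly maximal, so the same lemma gives $\cn{\ka-\ka^{\Pi_n'}}\le\eps$. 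The step kernels $\ka_{G_n}^{\Pi_n}$ and $\ka^{\Pi_n'}$ have matching matrices, so after a rearrangement of $[0,1]$ aligning $\Pi_n'$ with $\Pi_n$ they are $o(1)$-close in $L^\infty$, and the triangle inequality yields $\dc(G_n,\ka)\le 2\eps+o(1)$. The structural point to internalize is that the $\dP$-matching is applied to the near-maximizing partition $\Pi_n$, so it \emph{generates} the partition $\Pi_n'$ it is compared against; this synchronizes the two sides at the outset, which is precisely what your two independent applications of Frieze--Kannan cannot do.
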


\subsection{The partition metric and random graphs}

Let us return to our main focus in this paper, the extremely
sparse case $p=1/n$. Our hope was that in this setting the partition
metric might play the role of the cut metric in the denser setting,
showing, for example, that a random sequence $(G_{1/n}(n,\ka))$ has a
limit with probability $1$, and that this limit is different for different
$\ka$.

For $\dc$, in the denser setting, the limit was $\ka$ itself,
but here we cannot hope for this. Indeed, it is very easy to check
that in $G(n,c/n)$, the largest and smallest balanced cuts
have sizes that differ from $cn/4$ (the expected size of a random cut)
by order $n$. Indeed, using the greedy algorithm to assign
vertices one by one to a part of the bipartition, it is easy
to construct such a cut. For the best current bounds
on the largest cut in $G(n,c/n)$, see \cite{BCP},
\cite{CGHS1} and \cite{CGHS2} (for related results, see \cite{BBi},
\cite{KM}, \cite{LMcD}, \cite{FKL}, \cite{DSW}).
As pointed out to us by Christian Borgs,
one can describe the problem in the language of statistical physics:
when $p=1/n$, the entropy and energy terms balance. More precisely, a given cut is
`only' exponentially unlikely to have $1\%$ fewer
edges than the expected number (as opposed to superexponentially unlikely
when $np\to\infty$), but there are exponentially many cuts, so
some atypical cuts will exist.

Despite the observation above, it is extremely likely that
$(G_{1/n}(n,\ka))$ is Cauchy with respect to $\dP$ with probability $1$,
i.e., that there is a limit point (depending only on $\ka$)
in the space $\prod_k \C(\Mk)$ defined above, even though
this limit is not given by $\ka$ in the way one might expect.

\begin{conj}\label{q3}
For any bounded kernel $\ka$, the random
sequence $G_{1/n}(n,\ka)$ is Cauchy with respect to $\dP$
with probability $1$.
\end{conj}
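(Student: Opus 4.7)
The plan is to prove, for each fixed $k \ge 2$, that $\M_k(G_{1/n}(n,\ka))$ converges almost surely in the Hausdorff metric $\dH$ to a deterministic compact set $Y_k(\ka) \subset \Mk$; since $\dP$ metrises the product topology on $\prod_k \C(\Mk)$, combining across $k$ (a countable collection) then yields the Cauchy property almost surely. The natural way to identify $Y_k(\ka)$ is through its support function: for each linear functional $L$ on symmetric $k\times k$ matrices, with $\Pi$ ranging over balanced $k$-partitions, set $h_L^+(G):=\max_\Pi L(M_\Pi(G))$ and $h_L^-(G):=\min_\Pi L(M_\Pi(G))$, and try to show that both quantities converge almost surely for each $L$ in a countable dense family.

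Concentration of $h_L^\pm$ around its mean should follow from a vertex-exposure martingale argument: altering the type and incident edges of a single vertex $v$ changes $M_\Pi$ in at most two rows and columns, giving a martingale difference bounded by $O(\|L\|_\infty k^2 d_v/n)$, where $d_v$ is the degree of $v$. Since $\ka$ is bounded, the degree sequence of $G_{1/n}(n,\ka)$ satisfies $\max_v d_v = O(\log n)$ whp, so Azuma--Hoeffding combined with a truncation on the event $\{\max_v d_v \le C\log n\}$ yields $\Pr(|h_L^\pm - \E h_L^\pm| > n^{-1/4}) \le \exp(-\Omega(\sqrt n))$. By Borel--Cantelli we get $|h_L^\pm - \E h_L^\pm| \to 0$ almost surely for each $L$, and hence simultaneously for all $L$ in a countable dense family. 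The conjecture thus reduces to showing that the deterministic sequence $\E h_L^\pm(G_{1/n}(n,\ka))$ converges as $n\to\infty$ for every $L$.

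This convergence-of-expectations step is the principal obstacle. Already the special case $\ka\equiv c$, $k=2$, $L(M)=M_{12}$ encompasses the convergence of the normalised MAX-CUT of $G(n,c/n)$, which was established only recently by Dembo--Montanari--Sen using the Guerra--Toninelli interpolation method from spin-glass theory. My plan would be to extend that approach: interpret $L(M_\Pi)$ as a Hamiltonian on spin assignments $\sigma:[n]\to [k]$, and interpolate between $G_{1/n}(n,\ka)$ and the associated multi-type Galton--Watson tree $\bp_\ka$ of Section~\ref{treecounts}, aiming for a one-sided monotone bound that identifies $\lim_n \E h_L^\pm(n)$ as a Parisi-type variational formula on $\bp_\ka$. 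The generalisation from homogeneous $c$ to multi-type $\ka$ should be largely bookkeeping once the right interpolation is set up, but the underlying spin-glass input is the genuine difficulty, and is the reason the statement is only conjectured here.

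Once support-function convergence is in hand, Hausdorff convergence of the convex hulls $\mathrm{conv}(\M_k(G_n))$ is immediate. Upgrading to Hausdorff convergence of $\M_k(G_n)$ itself requires showing that $\M_k(G_n)$ is asymptotically dense in its convex hull; I would argue this by a vertex-swap mixing argument, starting from a partition $\Pi^*$ that extremises a given direction on the support function and gradually reassigning labels on small random subsets of vertices, thereby tracing out a continuous path in $\Mk$ whose density-matrix trajectory (with cross-block terms controlled by a further Azuma estimate) sweeps through the interior of the hull. This last step is routine by comparison with the spin-glass ingredient, but does require some care with the cross-block densities that arise during swapping.
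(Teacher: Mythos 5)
This statement is stated \emph{as a conjecture} in the paper (Conjecture~\ref{q3}); the paper offers no proof, and indeed discusses why a proof would be hard (it would imply convergence of quantities such as the normalised independence number of $G(n,c/n)$, which was not known at the time). The only rigorous result the paper gives in this direction is Theorem~\ref{dPconc}, which establishes concentration of $\M_k(G_n)$ around \emph{some} set $Y_n$ for each fixed $n$, explicitly leaving open whether those sets stabilise as $n\to\infty$. Your proposal correctly reproduces that reduction: step one (martingale concentration of the support functionals $h_L^\pm$) is essentially the same argument as the paper's Theorem~\ref{dPconc}, and you correctly isolate the remaining obstacle as the convergence of the deterministic sequence $\E h_L^\pm$.

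That remaining obstacle is exactly the open content of the conjecture, and your proposal does not close it. You flag this yourself, and point to Dembo--Montanari--Sen's interpolation proof of MAX-CUT convergence for $G(n,c/n)$; but (i) that result postdates this paper, (ii) it handles precisely the homogeneous $\ka\equiv c$, $k=2$ case (and extremal bisection), whereas here one needs all $k\ge2$, all directions $L$, and general bounded multi-type kernels, and claiming the extension is ``largely bookkeeping'' substantially understates what is needed, since the branching-process side of the interpolation and the associated variational formula are not obviously available for an arbitrary $\ka$; and (iii) even granting support-function convergence, that only gives Hausdorff convergence of the convex hulls $\mathrm{conv}(\M_k(G_n))$ --- your vertex-swap argument for upgrading this to convergence of $\M_k(G_n)$ itself is a heuristic sketch, not a proof, and the cross-block densities you mention to control are precisely where careless swapping can leave the convex hull's interior unreached. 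In short: the concentration half is fine and already in the paper; the convergence half (and the hull-to-set upgrade) constitute a genuine gap, which is the reason this is a conjecture rather than a theorem.
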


Note that the analogue of Conjecture~\ref{q3} with $np\to\infty$ is
trivial, since Chernoff's inequality shows that $G_p(n,\ka)$
and $\ka$ are close in the cut metric, and hence in $\dP$.
While Conjecture~\ref{q3} is very likely to be true, it may also be rather hard
to prove, since it would imply the convergence of many quantities
associated to $G(n,c/n)$ (such as the normalized size of the largest
cut) that are not known to converge. For example,
one can use partitions to pick out the size of the largest
independent set within $o(n)$ (in this sparse setting, an almost independent
set, spanning $o(n)$ edges, contains an independent set of almost
the same size), so Conjecture~\ref{q3} implies the following statement:
there is a function $\beta(c)$ such that the independence
number of $G(n,c/n)$ is $\beta(c)n+\op(n)$ as $n\to\infty$ with $c$
fixed; in particular, the independence number is concentrated for
each $n$. While concentration is well known and easy to prove,
at the moment it is not known that the
independence number can't `jump around' as $n$ increases, although
of course this is extremely implausible. For a discussion of this,
see, for example, Gamarnik, Nowicki and Swirszcz~\cite{GNS}; surprisingly,
for $c\le e$, convergence is known: it follows from results
of Karp and Sipser~\cite{KarpSipser}.

The same concentration applies to $\dP$, as shown by the following result.
\begin{theorem}\label{dPconc}
Let $\ka$ be a bounded kernel, let $k\ge 2$ be fixed, and let $G_n=G_{1/n}(n,\ka)$.
For each $n$ there is a set $Y_n\subset \Mk$ such that
$\dH(\M_k(G_n),Y_n)$ converges to $0$ in probability.
\end{theorem}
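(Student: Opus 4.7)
The strategy is (a) to show, via a vertex-exposure martingale and Azuma--Hoeffding, that for each fixed $Y\in\C(\Mk)$ the random variable $\dH(\M_k(G_n),Y)$ concentrates sharply around its mean, and (b) to deduce from this, together with compactness of $(\C(\Mk),\dH)$, the existence of a deterministic $Y_n$ with $\dH(\M_k(G_n),Y_n)\pto 0$.

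For (a), I would first truncate high-degree vertices. Since $\ka$ is bounded by some $C<\infty$, the degree of each vertex of $G_n=G_{1/n}(n,\ka)$ is stochastically dominated by $\mathrm{Bin}(n-1,C/n)$, so by Chernoff the event $\A_n=\{\Delta(G_n)\le D\}$ with $D=\ceil{10C\log n}$ has probability $1-o(1)$. Next I would estimate the one-vertex Lipschitz constant of $G\mapsto \M_k(G)$: if two graphs on the same vertex set differ only in the edges incident to a single vertex $v$, then for any balanced partition $\Pi=(P_1,\ldots,P_k)$ and any $i,j$ the ordered edge count $e(P_i,P_j)$ changes by at most $2\deg(v)$, so, using $|P_i|\ge n/(2k)$, the corresponding entry of $M_\Pi$ changes by at most
\[
 \frac{2\deg(v)\,n}{|P_i||P_j|}\le \frac{8k^2\deg(v)}{n}.
\]
Hence $\dH(\M_k(G),\M_k(G'))\le 8k^2\deg(v)/n$ in the $\ell_\infty$ metric, which on the event $\A_n$ is at most $O(k^2\log n/n)$.

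I would then expose the vertices one at a time (at step $i$ revealing the type $x_i$ and all edges $i\to j$ with $j<i$) and apply Azuma--Hoeffding to the martingale $Z_i=\E[\dH(\M_k(G_n),Y)\mid \F_i]$, with bounded differences of size $O(k^2\log n/n)$ on $\A_n$. Handling the exceptional set $\A_n^c$ either by a tail-sensitive (``bad event'') version of McDiarmid's inequality or by a coupling to a graph in which every edge at a high-degree vertex is deleted, one obtains
\[
 \Pr\bb{|\dH(\M_k(G_n),Y)-\E\dH(\M_k(G_n),Y)|>t}=o(1)
\]
for any $t=\omega(k^2(\log n)^{3/2}/\sqrt n)$, in particular for some $t=o(1)$ independent of $Y$.

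For (b), let $Y_n\in\C(\Mk)$ minimise $\mu_n(Y):=\E\dH(\M_k(G_n),Y)$; the minimum is attained by compactness. It suffices to show $\mu_n(Y_n)\to 0$. If not, passing to a subsequence one may assume that the laws of $\M_k(G_n)$, viewed as probability measures on the compact metric space $(\C(\Mk),\dH)$, converge weakly to a limit $\nu$ that is not a Dirac mass. Choose two points $Y^{(1)},Y^{(2)}$ in the support of $\nu$ with $\eta:=\dH(Y^{(1)},Y^{(2)})>0$. Then along the subsequence the random variable $\dH(\M_k(G_n),Y^{(1)})$ takes values in $[0,\eta/4)$ with probability bounded away from zero (when $\M_k(G_n)$ is near $Y^{(1)}$) and in $[3\eta/4,\infty)$ with probability bounded away from zero (when near $Y^{(2)}$), contradicting the concentration from (a) applied with $Y=Y^{(1)}$. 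Hence $\mu_n(Y_n)\to 0$, and a second application of (a) with $Y=Y_n$ gives $\dH(\M_k(G_n),Y_n)\pto 0$.

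\textbf{The main obstacle} is the concentration step (a). The worst-case per-vertex bounded-differences constant is $O(k^2)$, not $o(1)$, so a naive Azuma yields nothing useful: the sum of squared bounded differences is $\Theta(n)$, and concentration is only at scale $\Theta(1)$. The truncation to maximum degree $O(\log n)$ shrinks this sum to $O(k^4\log^2 n/n)=o(1)$, which is what drives the concentration; however, it forces the exceptional event $\A_n^c$ to be handled separately, via either the bad-event form of McDiarmid or a careful coupling.
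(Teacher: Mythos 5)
Your high-level plan is the same as the paper's: reduce to showing that a real-valued functional of $G_n$ (distance of $\M_k(G_n)$ to a fixed reference) concentrates around its mean via a bounded-differences martingale with a degree restriction, then recover $Y_n$ by a net/compactness argument. Your step (b) is correct; the weak-convergence-to-a-non-Dirac-limit argument is a slightly more abstract but equivalent version of the paper's $\eps$-net argument over $\Mk$.

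The difference --- and it is more than cosmetic --- is in how the concentration step is carried out. The paper does \emph{not} expose vertices of $G_n$ directly. It first couples $G_n$ as a subgraph of $G_n'=G(n,c/n)$ with $c=\sup\ka$ (keep edge $ij$ of $G_n'$ iff an independent uniform $w_{ij}\le\ka(x_i,x_j)/c$), conditions on $G_n'$ having $O(n)$ edges and $\Delta(G_n')\le n^{1/10}$, and runs Azuma on the $n+2e(G_n')$ independent variables $(x_i)$ and $(w_{ij})$. Once $G_n'$ is fixed, changing one $x_i$ can only toggle edges of $G_n'$ incident to $i$, a deterministic set of size at most $\Delta(G_n')$; changing one $w_{ij}$ toggles at most one edge. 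Both give clean worst-case bounded differences. A second, elementary martingale (comparing $G(n,c/n)$ with the graph obtained by choosing $cn/2$ edges independently and uniformly) then shows that $\E(Z_n\mid G_n')$ is itself concentrated over the randomness of $G_n'$.

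Your direct vertex exposure on $G_n$ with a high-probability degree cap has two genuine gaps that your sketch acknowledges but does not close. First, the edges $ij$ with $j>i$ have a distribution that depends on $x_i$, so the increment $|Z_i-Z_{i-1}|$ is controlled only via a coupling in which the number of discrepant edges at vertex $i$ is random, with expectation $O(1)$ but no worst-case bound. Second, the supremum defining the bounded difference $c_i$ runs over \emph{all} pairs of revelations $X_i,X_i'$ compatible with $\F_{i-1}$, which includes $X_i$ in which up to $i-1$ of the already-revealed edges at $i$ are present; the good event $\{\Delta(G_n)\le D\}$ is a condition on the realized graph and does not restrict this supremum, so one cannot simply plug $D$ for $\deg(v)$ in your Lipschitz bound. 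The remedy would be to run the martingale on a \emph{deterministically} degree-capped modification $\tilde G_n$ of $G_n$, designed so that the cap does not cascade among neighbours, and then argue that $\tilde G_n=G_n$ whp; this can be made to work but is considerably fiddlier than you suggest, and it is exactly the bookkeeping that the paper's coupling to $G(n,c/n)$ lets one avoid.
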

\begin{proof}
Since $\Mk$ is compact, from the definition of the Hausdorff
metric it is enough to show that for any given point $M\in \Mk$ 
the random variable
\[
 Z_n=d(M,\M_k(G_n)) = \inf_{M'\in \M_k(G_n)} d(M,M')
\]
is concentrated
around its mean as $n\to\infty$. For each $\eps>0$, taking an $\eps$-net in $\Mk$,
one can then find (discrete) sets $Y_{n,\eps}$ such that
\begin{equation}\label{necl}
 \dH(\M_k(G_n),Y_{n,\eps})\le 3\eps
\end{equation}
holds whp. Since \eqref{necl} holds whp for any fixed $\eps$, it also
holds whp for some function $\eps(n)$ tending to zero; taking
$Y_n=Y_{n,\eps(n)}$ then gives the result.

Roughly speaking,
since the real-valued random variable $Z_n$ changes by order $1/n$
if we add or delete an edge of $G_n$, concentration of $Z_n$
follows by standard martingale arguments. One must be a little careful,
however, for two reasons. Firstly, we cannot afford to use the edge-exposure
martingale, since it has too many steps. Using vertex exposure, one
must consider the possibility of large degrees. Secondly, the `type variables'
$x_1,\ldots,x_n$ introduce some dependence between edges.
There are many ways of working around these problems. One possibility
is as follows.

Let $c=\sup \ka<\infty$. We may couple $G_n$ and $G_n'=G(n,c/n)$
in a natural way so that $G_n\subset G_n'$. Indeed, first construct
$G_n'$, then choose the types $x_1,\ldots,x_n$, then keep
each edge $ij$ of $G_n'$ with probability $\ka(x_i,x_j)/c$, independently
of the others. It is easy to see that the set of edges remaining
has the distribution of $G_n$. (This construction is also used
by Bollob\'as, Janson and Riordan~\cite{BJRclust}.)

For the moment, let us condition on $G_n'$, fixing a possible
graph $G_n'$ with $e(G_n')\le 10 cn$ and $\Delta(G_n')\le n^{1/10}$, say.
We can construct $G_n$ from $G_n'$ using a sequence of $n+2e(G_n')$
independent uniform $U[0,1]$ random variables: one, $x_i$, for each vertex,
and one, $w_{ij}$, for each edge $ij$ of $G_n'$. Indeed, we simply keep the edge
$ij$ of $G_n'$ if and only if $w_{ij}\le \ka(x_i,x_j)/c$.
Changing one of the $w_{ij}$ adds or deletes at most one edge of $G_n$,
and so changes $Z_n$ by at most $k^2/n$. Changing one of the $x_i$ only affects
the presence in $G_n$ of edges of $G_n'$ incident with vertex $i$.
By assumption, there are at most $n^{1/10}$ such edges, so changing
$x_i$ changes $Z_n$ by at most $k^2n^{-9/10}$. Since we make only $O(n)$ choices,
it follows using the Hoeffding--Azuma inequality that, given $G_n'$
with the above properties, $Z_n$ is concentrated about its
mean $\E(Z_n\mid G_n')$.

Since almost every $G_n'=G(n,c/n)$ satisfies the bounds on $e(G_n')$ and
$\Delta(G_n')$ above, it remains only to show that $\E(Z_n\mid G_n')$ is concentrated.
For this it is more convenient to consider the graph $G_n''$ obtained as follows:
choose $cn/2$ edges independently and uniformly at random from all $\binom{n}{2}$
possible edges, deleting any repeated edges. It is easy to see that $G_n''$ and
$G(n,c/n)$ may be coupled to agree within $\op(n)$ edges. Since
adding or deleting an edge of $G$ changes $\E(Z_n\mid G_n'=G)$ by at most $k^2/n$,
it suffices to prove concentration of $\E(Z_n\mid G_n')$ when $G_n'$
has the distribution $G_n''$. But this is immediate from the Hoeffding--Azuma
inequality, since $G_n''$ is constructed from $O(n)$ independent choices
each of which changes this expectation by at most $k^2/n$.
\end{proof}

The result above shows that the random sets $\M_k(G_n)$ become concentrated 
as $n\to\infty$. The problem is that the points they become concentrated
around might in principle jump around as $n$ varies.

Even without a proof of Conjecture~\ref{q3}, it still makes good
sense to ask whether $\dP$ at least separates different
random graph models $G_{1/n}(n,\ka)$.
As before, let us write $\dedit(G_1,G_2)$ for the normalized edit distance between
two graphs $G_1$, $G_2$ with $|G_1|=|G_2|=n$, i.e., for $1/(pn^2)$ times the minimal 
number of edge changes (additions or deletions) that must be made
to $G_1$ to produce a graph isomorphic to $G_2$.
By a {\em random graph model} we simply mean a sequence of probability measures on
the sets of $n$-vertex graphs.
We say that two random graph models are {\em essentially equivalent}
if one can couple the corresponding $n$-vertex random graphs $G_n$ and $G_n'$
so that $\E(\dedit(G_n,G_n'))=o(1)$.

\begin{conj}\label{q4}
Let $p=1/n$, and let $\ka_1$ and $\ka_2$ be bounded kernels such that the models
$G_{1/n}(n,\ka_1)$ and $G_{1/n}(n,\ka_2)$ are not essentially equivalent.
Then the expected $\dP$ distance between $G_{1/n}(n,\ka_1)$ and $G_{1/n}(n,\ka_2)$
is bounded below as $n\to\infty$.
\end{conj}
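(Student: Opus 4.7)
\medskip
\noindent\textit{Proof proposal.} The strategy is to prove the contrapositive. Assume that along some subsequence $\E[\dP(G_n^{(1)}, G_n^{(2)})] \to 0$, where $G_n^{(i)} = G_{1/n}(n,\ka_i)$, and deduce essential equivalence of the two models.

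Using Theorem~\ref{dPconc} together with a diagonal extraction over $k$, I may pass to a subsequence along which, simultaneously for every $k \geq 2$, the random set $\M_k(G_n^{(i)})$ converges in probability in $(\C(\Mk), \dH)$ to a deterministic set $Y_k^{(i)}$. The hypothesis $\E[\dP] \to 0$ then forces $Y_k^{(1)} = Y_k^{(2)} =: Y_k$ for every $k$. The proof then reduces to two claims: (a) the family $(Y_k)_{k \geq 2}$ determines $\ka_i$ up to measure-preserving rearrangement; and (b) given (a), an explicit coupling achieves $\E[\dedit(G_n^{(1)}, G_n^{(2)})] = o(1)$.

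For (a), the guiding principle is that extremal (minimal or maximal) $k$-partition densities of $G_{1/n}(n,\ka)$ locate the blocks of $\ka$. Reducing by approximation to the finite-type case with type space $(\sss, \mu)$, I expect that for $k \geq |\sss|$ the set $Y_k$ contains a distinguished extremal point whose matrix coincides, up to a permutation of parts, with the block-density matrix of $\ka$; two kernels producing the same $Y_k$ for all $k$ must therefore have the same block matrix up to relabelling, i.e.\ agree up to rearrangement. General bounded kernels are handled by approximating from below by finite-type kernels, controlling $\dP$ and $\dedit$ simultaneously under the approximation. For (b), once $\ka_1$ and $\ka_2$ agree up to rearrangement, one couples the type sequences $(x_i^{(1)})$ and $(x_i^{(2)})$ so that they are identified under this rearrangement, and then applies the standard Bernoulli coupling to the edges; the resulting edit distance is $o(n)$ by a Chernoff bound on the number of mismatched edges.

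The principal obstacle is (a), specifically the assertion that the extremal cuts of $G_{1/n}(n,\ka)$ concentrate around the algebraic extremal block densities of $\ka$. In the denser regime $np \to \infty$ this follows from Chernoff and Theorem~\ref{dpdc}; at $p = 1/n$, the entropy--energy competition mentioned just before Conjecture~\ref{q3} produces exponentially many atypical cuts of size $\Theta(n)$, and no Szemer\'edi-type regularity tool is available. Techniques from the statistical physics of sparse random graphs (interpolation and cavity methods) appear essential, and even the convergence (as opposed to concentration) of the extremal cut sizes in $G(n,c/n)$ is a known open problem. A reasonable first milestone is to establish the conjecture for the chessboard family $\ka_{a,b}$, where the block matrix is combinatorially explicit and one can hope to analyse the extremal balanced $k$-cuts directly.
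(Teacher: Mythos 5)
This statement is a \emph{conjecture}; the paper does not prove it, and in fact explicitly remarks immediately after stating it that ``any proof of Conjecture~\ref{q4} is likely to involve understanding for which pairs of kernels the corresponding models $G_{1/n}(n,\ka)$ are essentially equivalent'' --- i.e.\ Question~\ref{kkq}, which is then left open. So there is no paper proof to compare against, and you have correctly identified that the statement sits on top of hard open problems. Your reduction in the first paragraph (passing by Theorem~\ref{dPconc} and diagonalisation to deterministic limit sets $Y_k^{(1)}=Y_k^{(2)}=Y_k$) is a sensible and correct first step, and your candid assessment of the obstacle at the end is appropriate.

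However, claim (a), even as a target, is wrong as stated: the family $(Y_k)$ cannot determine $\ka$ up to measure-preserving rearrangement, because there exist kernels $\ka_1\not\approx\ka_2$ whose models $G_{1/n}(n,\ka_i)$ \emph{are} essentially equivalent (hence have the same limit sets $Y_k$). Example~\ref{scc} in the paper is exactly such a pair: take two inequivalent bounded kernels $\ka_1,\ka_2$ with $\pi_{\ka_1}=\pi_{\ka_2}$ (Theorem~\ref{bpdist} produces many), and add a constant $c>1$ to each; then $c+\ka_1\not\approx c+\ka_2$ but the models agree. The chessboard discussion around Conjecture~\ref{c5} makes the same point: for $|\delta|<\sqrt c$, the model $G_{1/n}(n,\ka_{c+\delta,c-\delta})$ is expected to be essentially equivalent to $G(n,c/n)$, so the density matrix with entries $c\pm\delta$ must already belong to $Y_2$ for the constant kernel $c$, even though the kernels are different. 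More fundamentally, the ``distinguished extremal point'' of $Y_k$ is not the algebraic block-density matrix of $\ka$: even for the constant kernel, the extremal balanced cut of $G(n,c/n)$ sits at $cn/4\pm\Theta(\sqrt c\,n)$, well beyond the algebraic value $cn/4$, because of the entropy--energy trade-off the paper mentions. So even the identification step inside (a) fails. The correct target underlying the contrapositive is not ``$(Y_k)$ determines $\ka$ up to rearrangement'' but the much weaker (and still very hard) ``$(Y_k)$ determines $\ka$ up to essential equivalence of the models'' --- which is precisely Question~\ref{kkq}, so the reduction is genuine but buys you nothing beyond restating the difficulty. Similarly, your coupling in (b) (aligning the type sequences and then Bernoulli-coupling the edges) works only in the $\ka_1\approx\ka_2$ case; the coupling needed in the interesting cases, such as Example~\ref{scc}, is at the level of the graphs, not the types.
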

Note that the distance between the $n$ vertex graphs is concentrated
by Theorem~\ref{dPconc}.
Of course, any proof of Conjecture~\ref{q4} is likely to involve understanding
for which pairs of kernels the corresponding models $G_{1/n}(n,\ka)$
are essentially equivalent. We discuss this briefly in the next section.

\section{Which kernels give the same random graphs?}

We have already seen a rather simple example of two kernels that are not
equivalent (in the sense of \ssequiv), which nonetheless
give rise to essentially equivalent sparse random graphs: we may take
any two non-equivalent kernels $\ka_1$, $\ka_2$ corresponding to the same
subcritical branching process. Of course, the corresponding random
graphs have a rather simple structure, since they are made up of (essentially)
only small tree components.
Unfortunately, (or interestingly, depending on ones point of view)
a simple modification of this example gives examples
with more complex structure.

\begin{example}\label{scc}
Let $c>1$ be constant, and let $\ka_1$ and $\ka_2$ be bounded kernels
corresponding to the same subcritical branching process. Writing $c+\ka_i$ for the 
pointwise sum of the constant kernel $c$ and the kernel $\ka_i$,
the models $G_{1/n}(n,c+\ka_i)$, $i=1,2$, are essentially equivalent.
To see this, we realize $G_{1/n}(n,c+\ka_i)$ by first constructing
$G_{1/n}(n,\ka_i)$, and then adding each non-edge with probability $c/n$,
and then adding each non-edge with a tiny probability to get the edge
probabilities exactly right. We can ignore the last step since
it adds $\Op(1)=\op(n)$ edges. Constructing the graphs $G_{1/n}(n,\ka_i)$
first, we may relabel the vertices of these graphs so that they coincide
apart from $\op(n)$ edges. We may then add each possible edge
to both graphs simultaneously with probability $c/n$ to obtain (essentially)
the required coupling of the graphs $G_{1/n}(n,c+\ka_i)$.
\end{example}

In general, we believe the following is an interesting question.

\begin{question}\label{kkq}
For which pairs of supercritical kernels $\ka_1$, $\ka_2$
are the models $G_{1/n}(n,\ka_1)$ and $G_{1/n}(n,\ka_2)$
essentially equivalent?
\end{question}

Certainly, any such pair must satisfy $\pi_{\ka_1}=\pi_{\ka_2}$,
otherwise the models are distinguished by their tree counts.
A simple answer to Question~\ref{kkq} would be important
for the general understanding of the sparse inhomogeneous
model of Bollob\'as, Janson and Riordan~\cite{BJR}.

Since Question~\ref{kkq} is rather open ended, let us focus on one
particular example: the pair consisting of the constant
kernel $c$ and the kernel $\ka_{c+\delta,c-\delta}$ defined
in~\eqref{cbk}, with $0<|\delta|<c$. The cases $\delta$ positive
and $\delta$ negative may behave differently, although we do not expect this
to be the case.
For $0<\delta'<\delta$, or $0>\delta'>\delta$, one can construct
$G_{1/n}(n,\ka_{c+\delta',c-\delta'})$ from $G_{1/n}(n,\ka_{c+\delta,c-\delta})$
by deleting each edge independently with a certain probability, and then
adding in each non-edge with an appropriate probability.
It follows that if $G_{1/n}(n,\ka_{c+\delta,c-\delta})$ and $G(n,c/n)$
are essentially equivalent, then so are $G_{1/n}(n,\ka_{c+\delta',c-\delta'})$ and $G(n,c/n)$.
Hence there is an interval $I(c)$ such that
$G_{1/n}(n,\ka_{c+\delta,c-\delta})$ and $G(n,c/n)$ are essentially equivalent for all
$\delta\in I(c)$, but for no $\delta\in[-c,c]\setminus I(c)$.

The construction in Example~\ref{scc} shows that $[-1,1]\subset I(c)$ whenever
$c>1$. On the other hand, it is not hard to show that for $c$ large,
the most extreme balanced cuts in $G(n,c/n)$ contain $cn/4\pm\Theta(\sqrt{c}n)$
edges; for the best known bound on the largest cut see \cite{BCP}, \cite{CGHS1} and \cite{CGHS2}.
Since a typical $G_{1/n}(n,\ka_{c+\delta,c-\delta})$ clearly contains a balanced
cut with $(c-\delta)n/4+\op(n)$ edges, it follows that $I(c)\subset[-A\sqrt{c},A\sqrt{c}]$
for $c$ large, where $A$ is an absolute constant.
It is not hard to convince oneself that the second bound is closer
to the truth, although we do not have a proof. In fact, we believe
that the endpoints of $I(c)$ are exactly $\pm\sqrt{c}$ for every
$c\ge 1$, although we make no guess as to whether these endpoints
are included.

\begin{conj}\label{c5}
Let $c>1$ and $-c \le \delta\le c$ be constants.
If $\delta<\sqrt{c}$, then the models
$G_{1/n}(n,\ka_{c+\delta,c-\delta})$ and $G(n,c/n)$ are essentially equivalent.
If $\delta>\sqrt{c}$, then they are not.
\end{conj}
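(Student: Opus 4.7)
The plan is to prove the negative direction ($\delta>\sqrt c$) by exhibiting, with probability tending to $1$, a balanced partition of $V(G_{1/n}(n,\ka_{c+\delta,c-\delta}))$ whose cut density lies strictly outside the asymptotic range of balanced cut densities of $G(n,c/n)$. Combined with Theorem~\ref{dPconc}, this will place the sets $\M_2(\cdot)$ at positive Hausdorff distance, ruling out an edit-distance coupling: since adding or deleting a single edge shifts every entry of every density matrix in $\M_2$ by $O(1/n)$, any coupling with $\E\dedit(G_n,G_n')=o(1)$ would force $\dH(\M_2(G_n),\M_2(G_n'))\pto 0$, a contradiction.

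The heart of this direction is the construction of the revealing partition, and here $\sqrt c$ enters as the Kesten--Stigum / Massouli\'e threshold. Viewing $\ka_{c+\delta,c-\delta}$ as a $2\times 2$ kernel, its integral operator has eigenvalues $c$ and $\delta$, the second eigenvector being the signed indicator of the planted bipartition; the threshold for this second eigenvalue to dominate the spectral radius $\sqrt c$ of the sparse Erd\H os--R\'enyi ``backbone'' is exactly $|\delta|>\sqrt c$. Concretely, I would invoke the non-backtracking spectral analysis of Bordenave, Lelarge and Massouli\'e (or the self-avoiding walk approach of Mossel, Neeman and Sly) to produce a real eigenvalue of the non-backtracking matrix of $G_{1/n}(n,\ka_{c+\delta,c-\delta})$ near $\delta$, whose eigenvector sign-rounds to a partition correlated with the planted one by some $\alpha=\alpha(c,\delta)>0$. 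After moving $o(n)$ vertices to balance the parts, the resulting balanced partition has cut density $(c-\alpha|\delta|+o(1))/4$. For $\delta>\sqrt c$ this is strictly below the minimum balanced cut density of $G(n,c/n)$, since standard first- and second-moment arguments (following~\cite{BCP,CGHS1,CGHS2}) show the latter concentrates and stays above a value strictly greater than $(c-\alpha|\delta|)/4$ for $|\delta|$ just past $\sqrt c$; the symmetric argument with the maximum balanced cut handles $\delta<-\sqrt c$.

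For the positive direction $\delta<\sqrt c$ I would attempt an explicit coupling via local switching. Starting from $G\sim G_{1/n}(n,\ka_{c+\delta,c-\delta})$, one repeatedly picks a random edge $e$ and a random non-edge $f$ and swaps them according to an acceptance rule designed to erase the bias induced by the planted partition, with target law $G(n,c/n)$. A second-moment calculation should show that $o(n)$ swaps suffice to bring the law to within total variation $o(1)$ of $G(n,c/n)$ precisely when $\delta^2<c$, with every vertex affected $O(1)$ times in expectation, so the two graphs differ in $o(n)$ edges. The construction in Example~\ref{scc}, which already handles the sub-critical perturbation $c+\ka_i$, gives a template for how a sub-threshold perturbation can be absorbed.

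The main obstacle is the positive direction. Edit-distance coupling is strictly stronger than either total variation closeness or contiguity, and the known non-detection results below Kesten--Stigum (contiguity, indistinguishability of the planted partition) do not directly yield an $o(n)$-edit-distance coupling. What seems to be required is a genuinely quantitative statement that, conditional on the graph $G$, the posterior distribution of the planted partition $\sigma$ is close to being supported on $o(n)$-perturbations of a single typical $\sigma_0$, so that one may resample $\sigma$ while only editing $o(n)$ edges of $G$. A secondary obstacle, on the negative side, is that the exact minimum bisection of $G(n,c/n)$ is a well-known open problem; but only the qualitative gap $> (c-\alpha|\delta|)/4$ at $|\delta|$ just past $\sqrt c$ is needed here, and this should be within reach of second-moment / interpolation techniques that do not require sharp constants.
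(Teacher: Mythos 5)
This statement is Conjecture~\ref{c5} in the paper: it is open. The paper offers only heuristics (Example~\ref{scc}, the discussion around Question~\ref{qroot}), and the authors explicitly disclaim having a proof. There is therefore no paper argument to compare against; the proposal has to stand alone.

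Your negative direction ($\delta>\sqrt c$) has a quantitative gap that invalidates the route chosen. You propose to produce, via non-backtracking spectral rounding, a balanced partition with overlap $\alpha(c,\delta)$ with the planted bipartition and hence cut density roughly $(c-\alpha\delta)/4$, and then claim this falls strictly below the minimum bisection density of $G(n,c/n)$. But the minimum bisection of $G(n,c/n)$ is $c/4-A\sqrt c+o(\sqrt c)$ for a constant $A>1/4$ (for large $c$; this is precisely the extremal-cut fact the paper quotes from \cite{BCP,CGHS1,CGHS2}). So even with perfect recovery, $\alpha=1$, the planted bipartition density $(c-\delta)/4$ is still \emph{above} the minimum bisection of $G(n,c/n)$ whenever $\delta<4A\sqrt c$, i.e.\ all the way up to roughly $1.5\sqrt c$; with weak recovery ($\alpha\to 0$ as $\delta\downarrow\sqrt c$) the cut you manufacture is less extreme still. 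This is exactly why the paper itself derives only $I(c)\subset[-A'\sqrt c,A'\sqrt c]$ for some absolute constant $A'$, and not $[-\sqrt c,\sqrt c]$, via cut densities. To reach the sharp $\sqrt c$ threshold one would need a statistic that is both $O(1/n)$-Lipschitz under single edge edits (so that it survives an $o(n)$-edit coupling) and sensitive right down to the Kesten--Stigum threshold; balanced-cut density is not such a statistic, and the Bordenave--Lelarge--Massouli\'e and Mossel--Neeman--Sly detection tests are spectral or likelihood-ratio tests whose edit-Lipschitz continuity is unclear.

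On the positive direction you have located the right obstacle but have not overcome it: contiguity and non-reconstruction below $\sqrt c$ do not give an $o(n)$-edit coupling, and indeed essential equivalence is logically incomparable to total-variation closeness (e.g.\ $G(n,c/n)$ and $G(n,(c+1/\log n)/n)$ are essentially equivalent but at total-variation distance $1-o(1)$). The ``local switching'' sketch is only a programme; the posterior concentration of the planted partition on $o(n)$-perturbations of a single type sequence, which your resampling argument would need, does not follow from non-reconstruction alone.
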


The model $G_{1/n}(n,\ka_{c+\delta,c-\delta})$ is a special case of the {\em planted
bisection} model $G(n;p,p')$: for any $p=p(n)$ and $p'=p'(n)$, the graph
$G(n;p,p')$ is constructed by partitioning its vertex set $[n]$ at random
into two (almost) equal parts, and then joining any two vertices in the same
part with probability $p$, and two vertices in different parts with probability $p'$.
The question of reconstructing the vertex partition given only the graph
has received considerable attention, generally with emphasis on polynomial-time
algorithms for $p$, $p'$ satisfying suitable conditions; 
see, for example, Boppana~\cite{Boppana}, and, for a linear expected
time algorithm, Bollob\'as and Scott~\cite{BS}. Most such results are
for graphs with average degree tending to infinity, but Coja-Oghlan~\cite{CO}
proved results that include the extremely sparse case, showing that one can find
a minimum balanced cut in $G_{1/n}(n,\ka_{c+\delta,c-\delta})$ in polynomial 
time whenever $\delta>\Theta(1+c\log(c))$. The connection with Conjecture~\ref{c5}
is rather loose, but nonetheless interesting.

Let us present another question that does seem to be closely related to
Conjecture~\ref{c5}.

\begin{question}\label{qroot}
When does the branching process $\bp_{\ka_{c+\delta,c-\delta}}$ forget the type of the
root?
\end{question}

To spell out what we mean by Question~\ref{qroot},
let $T_t$ be the random labelled tree corresponding
to the first $t$ generations of $\bp=\bp_{\ka_{c+\delta,c-\delta}}$, with each
vertex labelled by the type of the corresponding particle.
Here we take the types to be $1$ and $2$, corresponding to $x\le 1/2$
and $x>1/2$. Note that each particle has $\Po((c+\delta)/2)$ children
of its own type, and $\Po((c-\delta)/2)$ of the opposite type.
Recall that, since $\pi_{\ka_{c+\delta,c-\delta}}=\pi_{c}$, the distribution of $T_t$ without labels
is simply that of the first $t$ generations of a single-type branching process.
Let $U_t$ be the tree obtained from $T_t$ by forgetting the labels
of all vertices {\em other than those at distance $t$ from the root}, and let $p_t$ be the probability,
conditional on $U_t$, that the root has type $1$, so $p_t$ is a random
variable depending on $\bp$ via $U_t$.
We say that $\bp$ {\em forgets the type of the root} if $p_t\pto 1/2$
as $t\to \infty$. It is easy to couple
the branching processes $\bp(1)$ and $\bp(2)$
started with particles of type $1$ and $2$, respectively, so that
the tree structures always agree, and the expected number of label mismatches
in level $t$ is $\delta^t$. It follows that $\bp$ forgets the type of the root
if $\delta\le 1$. Although we have not checked the details,
we believe that $\bp$ forgets the type of the root
if and only if $\delta\le \sqrt{c}$; this is based
on linearizing the natural recurrence describing the distribution
of $p_{t+1}$ in terms of that of $p_t$. (Actually, one works
with the distribution of $p_t$ given that the root has type $1$.)
This argument certainly
shows that $\bp$ cannot forget the type of the root if $\delta>\sqrt{c}$;
the reverse implication is not so clear.

Although we certainly have no proof, it seems likely that if
$\bp=\bp_{\ka_{c+\delta,c-\delta}}$ forgets the type of the root,
then the models $G_{1/n}(n,\ka_{c+\delta,c-\delta})$ and $G(n,c/n)$ are essentially equivalent.
Roughly speaking, suppose that, given the global structure of
$G_{1/n}(n,\ka_{c+\delta,c-\delta})$, seen without types, we can somehow
form a good guess as to which vertices at graph distance $100$ from a given
vertex $v$ are of type $1$ and which of type $2$. Even then, $v$ itself
is (almost) equally likely to be of either type. This strongly suggests
that one can get essentially no information about the vertex types from 
the graph, and hence that the types do not matter to the graph. This 
vague heuristic is very far from a proof, however!

In summary, it seems very likely that the answers to Conjecture~\ref{c5}
and Question~\ref{qroot} are closely related. In turn they 
may well be related to the question of when the maximum/minimum balanced
cut distinguishes $G_{1/n}(n,\ka_{c+\delta,c-\delta})$ from $G(n,c/n)$.
We do not even have a guess as to the form of a more general answer
to Question~\ref{kkq}.

\section{General extremely sparse graphs}\label{ss_nontree}

So far, we have mainly considered locally acyclic
graphs (the exception is Section~\ref{ss_par}).
This is natural when considering the metric $\de$,
for which only counts of trees make sense when using
the $p=1/n$ special case of
the normalization used in~\cite{BRsparse}.
It is also natural when considering
the random graph model $G_{1/n}(n,\ka)$.
However, there are many natural sequences of graphs with $\Theta(n)$
edges that are not treelike. A simple example
is given by taking $G_n$ to consist of $n/3$
vertex disjoint triangles, for $n$ a multiple of three.
Clearly, this graph is not close to any locally treelike graph.
It would be nice to have a notion of when two general graphs
with $\Theta(n)$ edges are close, as well as a more
general random model generating such graphs.

What distinguishes the union of $n/3$ triangles from a Hamilton
cycle $C_n$, say? The simplest answer is the number of triangles.
Throughout this section we consider sequences $(G_n)$
with exponentially bounded tree counts, i.e., we assume
that there is a constant $C$ such that
$\limsup_{n\to\infty} t(T,G_n)\le C^{e(T)}$ for every tree
$T$. This condition is certainly satisfied if the graphs
$G_n$ have bounded maximum degree, for example, and the
reader may wish to think of this case for simplicity. 
In fact, as in Section~\ref{treecounts}, something
weaker than exponential boundedness probably suffices,
but exponential boundedness is a natural assumption.
If $(G_n)$ has exponentially bounded (or indeed simply bounded)
tree counts, then the number of embeddings or homomorphisms
from any fixed graph $F$ into $G_n$ is $O(n)$.

For each fixed $F$, let
\[
 \ts(F,G_n) = \emb(F,G_n)/n,
\]
and let $\ttt(F,G_n)=\hom(F,G_n)/n$. As in Section~\ref{treecounts}
or in~\cite{BRsparse},
we can use the (now differently normalized) subgraph
counts $\ts$ or $\ttt$ to define a metric $\detl$, by first mapping
$G_n$ to
\[
 \ts(G_n) = (\ts(F,G_n))_{F\in \F} \in [0,\infty)^\F.
\]
As noted above, each coordinate is bounded for the sequences
we consider, so $\ts$ maps into a compact subset $X$ of $[0,\infty)^\F$.
Using any metric $d$ on $X$ giving rise
to the product topology, we may then define $\detl(G,G')=d(\ts(G),\ts(G'))$.

As in Section~\ref{treecounts}, one can view the (limiting) counts
$\ts(F,G_n)$ as giving the moments of a certain distribution that
we could instead study directly.
Let $\Gr_t$ be the set of isomorphism classes of connected finite
rooted graphs with radius at most $t$,
i.e., with every vertex at graph distance at most $t$ from the root.
Also, let $\Gr$ be the set of all locally finite rooted (finite or infinite) graphs.
As in Section~\ref{treecounts}, for $F\in \Gr_t$ let
\[
 p(F,G_n) = p_t(F,G_n) = \Pr\bb{ \Ga_{\le t}(v) \isom F},
\]
where $v$ is a vertex of $G_n$ chosen uniformly at random,
and $\Ga_{\le t}(v)$ is the subgraph of $G_n$ induced by the vertices
within distance $t$ of $v$, viewed as a rooted graph with root $v$.
Since each $p_t(F,G_n)$ lies in $[0,1]$, trivially, any sequence $(G_n)$
has a subsequence along which $p_t(F,G_n)$ converges to some $p_t(F)$
for every $t$ and every $F\in \Gr_t$.
Furthermore, as in Section~\ref{treecounts}, if $(G_n)$
has bounded tree counts then it is easy to check that
$\sum_{F\in \Gr_t}p_t(F)=1$, so $p_t$ is a probability distribution
on $\Gr_t$. Furthermore, these probability distributions for
different $t$ are consistent in the natural sense, and so may be combined
to form a probability distribution on $\Gr$. For this reason, we
say that a probability distribution $\pi$ on $\Gr$
is the {\em local limit} of the sequence $(G_n)$ if
\[
 p_t(F,G_n) \to \pi(\{G: G|_t=F\})
\]
for every $t\ge 1$ and every $F\in \Gr_t$, where for $G\in \Gr$, the graph
$G|_t$ is the subgraph of $G$ induced by the vertices within distance $t$ of the root.

Of course one can define a corresponding metric $\dloc$, by mapping each graph
to the point $(p_t(F,G_n))\in X=\prod_t [0,1]^{\Gr_t}$, and then applying
any metric on $X$ giving rise to the product topology.
Just as for tree counts, under suitable assumptions,
the local limit $\pi$ determines the limiting
subgraph counts $\ts$ and vice versa.
(One `suitable assumption' is bounded maximum degree. In fact,
as in Section~\ref{treecounts},
exponentially bounded tree counts will almost certainly do.)
Thus the metrics $\detl$ and $\dloc$ are equivalent.
As before, it seems more natural to study the distribution
of the neighbourhoods of vertices directly, rather than
the subgraph counts, which are essentially moments of this distribution.

This notion of local limit is extremely natural. In fact,
Benjamini and Schramm~\cite{BSrec} used the same notion (but for a random
rather than deterministic sequence ($G_n$)) to define a `distributional limit'
of certain random planar graphs; they showed that the random walk on the
limiting graph is recurrent with probability $1$.
The same notion in slightly different generality was studied
by Aldous and Steele~\cite{AS}, under the name `local weak limit',
and Aldous and Lyons~\cite{AL}, where the term `random weak limit' is used.
Returning to basic questions about this notion of limit, perhaps
the first is: which probability distributions on $\Gr$ can arise in this way?
As in Section~\ref{treecounts}, a necessary condition
is that the distribution $\pi$ must be {\em involution invariant}, meaning
that
\begin{equation}\label{io3}
 \E_\pi \sum_y f(G,x,y) = \E_\pi \sum_y f(G,y,x)
\end{equation}
for any non-negative isomorphism invariant function $f$ defined on triples $(G,x,y)$,
where $G$ is a locally finite graph and $x$ and $y$ are adjacent vertices of
$G$. Here the expectation is over the $\pi$-random rooted graph $(G,x)$,
and the sum is over neighbours of $x$.

The following question is due to Aldous and Lyons~\cite{AL}.

\begin{question}\label{qglim}
Does every involution-invariant probability distribution on $\Gr$
arise as the local limit of some sequence $(G_n)$ of graphs
with $|G_n|=n$?
\end{question}

Just as in the tree case, it may make sense to restrict to graphs
with bounded maximum degree, asking the analogue of Question~\ref{qtlimD}.
Note that it does not matter here whether we
consider a sequence of deterministic finite graphs, or a sequence
of distributions on $n$-vertex graphs: for the purposes
of Question~\ref{qglim}, a distribution on connected $n$-vertex graphs
may be well approximated by a much larger finite graph whose components
have approximately the right distribution.

Since this question seems to be rather important, let us briefly
describe its history; for more details we refer the reader to Aldous and Lyons~\cite{AL}.
Firstly, as noted above, the question is from~\cite{AL}, where it is
stated as an especially important open question.
(Lyons~\cite{Lyons} referred to a proof of a positive
answer to Question~\ref{qglim}, but in a note added in proof said
that this proof was incorrect.) 

Benjamini and Schramm~\cite{BSrec} were the first to note that any distribution
that is a local limit must be involution invariant. In
fact, they noted that it must satisfy an {\em a priori} stronger
condition they called the `intrinsic mass transport principle'. (This
is the same as involution invariance except that one considers a function
$f$ defined on triples $(G,x,y)$ where $x$ and $y$ are any vertices
of $G$, not necessarily adjacent vertices.)
Aldous and Steele~\cite{AS} introduced the somewhat simpler condition
of involution invariance. As shown by Aldous and Lyons~\cite{AL}, involution
invariance and the intrinsic mass transport principle are equivalent.

Aldous and Lyons~\cite{AL} use the term `unimodular' for
an involution invariant probability distribution on $\Gr$.
The motivation comes from a very special case: suppose
that $\pi$ is supported on a single rooted graph $(G,x)$. Then $G$
must be vertex transitive. In this case, $\pi$ is involution
invariant if and only if $G$ is unimodular,
in the sense that its automorphism group is unimodular, i.e.,
its left-invariant Haar measure is also right-invariant.

Unimodular transitive graphs have been studied for some time,
quite independently of the question of local limits (and
well before this arose);
see, for example,
Benjamini, Lyons, Peres and Schramm~\cite{BLPS_GAFA99}.
For a simple description of unimodularity in this context, see,
for example, Timar~\cite{Timar}.
It is perhaps surprising that there exist (bounded degree) vertex
transitive graphs that are non-unimodular. One example
is the `grandmother graph' $G$ shown in Figure~\ref{fig_gm},
introduced by Trofimov~\cite{Trofimov} in a slightly different context.
\begin{figure}[htb]
\centering
\epsfig{file=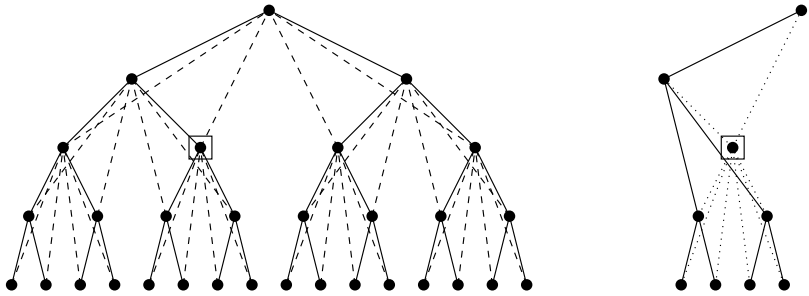,width=4.2in}
\caption{The grandmother graph (left), and the subgraph induced by the neighbourhood
of a vertex (right).}\label{fig_gm}
\end{figure}
This is the graph obtained by arranging the vertices
of the 3-regular tree into levels so that every vertex in level $n$
has one neighbour in level $n-1$, its parent, and two in level $n+1$, its children,
and then joining every vertex to its grandparent and grandchildren
in addition to its parent and children.
As can be seen from Figure~\ref{fig_gm}, from the 2-neighbourhood of a vertex
in $G$ one can identify its parent in the original 3-regular tree.
Let $\pi$ be the distribution on $\Gr$ supported only on the (rooted) grandmother graph $G$,
and let $f(G,x,y)$ be the function taking the value $1$ if $G$ is the grandmother graph
and $y$ is the parent of $x$, and $0$ otherwise; this function is isomorphism invariant.
For this $\pi$ and $f$, the left hand side of \eqref{io3} is the number of parents
of the root, i.e., 1, and the right hand side is the number of children (the number
of vertices whose parent the root is), i.e., 2. Thus $G$ is not unimodular.

Other examples of non-unimodular transitive graphs include the Diestel--Leader
graphs introduced in a different context in~\cite{DL}.

As noted by Aldous and Lyons~\cite{AL}, a positive answer to (their slightly
more general form of) Question~\ref{qglim} would have major implications
in group theory, since it would essentially imply that all finitely generated groups
are `sofic'. This group property was initially introduced (in a slightly different form)
by Gromov~\cite{Gromov};
the term `sofic' was coined by Weiss~\cite{Weiss}.
The key point is that several well-known conjectures in group theory
have been proved for sofic groups; see Elek and Szab\'o~\cite{ES04} for example.
For a brief survey of the topic of sofic groups, see Pestov~\cite{Pestov}.

Returning to metrics on sparse graphs,
in one sense, the metric $\dloc$ associated to the notion of local convergence
seems to be the most natural measure of `similarity' between general sparse
graphs. However, while this notion captures local information well,
it still loses global information: if $(G_n)$ has a certain
local limit $\pi$, then the graphs $H_{2n}$ formed by taking the disjoint
union of two copies of $G_n$ have the same local limit. (Indeed,
$p_t(F,G_n)=p_t(F,H_{2n})$ for every connected graph $F$.)
This shows that $\dloc$ fails to capture the global structure
of the graph, and suggests that it makes sense to consider
$\dloc$ and $\dP$ together; we shall do so in the next section.

\section{Further metrics, models and questions}\label{sec_further}

For fully dense graphs, with $\Theta(n^2)$ edges, the results of
Borgs, Chayes, Lov\'asz, S\'os and
Vesztergombi~\cite{BCLSV:1,BCLSV:2} show that one single metric, say
$\dc$, effectively captures several natural notions of local and
global similarity. Indeed, convergence in $\dc$ is equivalent to
convergence in the partition metric 
$\dP$ (a natural global notion) and to convergence in
$\de$, i.e., convergence of all small subgraph counts,
a natural local notion.

In the extremely sparse case, we have considered two metrics, the
partition metric $\dP$ and the local metric $\dloc$, which respectively
capture global and local similarity. Of course, one would like a single
metric capturing both notions, and also the interaction between local
and global properties. Fortunately, there is a natural combination
of $\dP$ and $\dloc$. 

\subsection{The coloured neighbourhood metric}\label{ss_cn}

Let $G_n$ be a graph with $n$ vertices, and $k\ge 1$ an integer.
We shall think of $G_n$ as having $\Theta(n)$ edges, though this
is only relevant when we come to sequences $(G_n)$.
Let $\Pi=(P_1,\ldots,P_k)$ be a partition of the vertex set of $G_n$,
which we may think of as a (not necessarily proper) $k$-colouring
of $G_n$. This time, for variety, we do not insist that the parts
have almost equal sizes; this makes essentially no difference.
Let $\Gr_{k,t}$ be the set of isomorphism classes of $k$-coloured
connected rooted graphs with radius at most $t$. For
each $F\in \Gr_{k,t}$, let
$p_{k,t}(G_n,\Pi)(F)$ be the probability that the $t$-neighbourhood
of a random vertex of the coloured graph $(G_n,\Pi)$
is isomorphic to $F$ as a coloured rooted graph,
so $p_{k,t}(G_n,\Pi)$ is a probability distribution on $\Gr_{k,t}$.
Finally, let
\[
 \M_{k,t}(G_n) =\{ p_{k,t}(G_n,\Pi) \},
\]
where $\Pi$ runs over all $k$-partitions of $V(G_n)$. Thus $\M_{k,t}(G_n)$
is a finite subset of the space $\P(\Gr_{k,t})$ of probability distributions
on $\Gr_{k,t}$. Of course, one can view $\M_{k,t}(G_n)$ as a multiset,
in which case it has exactly $k^n$ elements, one for each colouring.
However, as for the partition metric in Section~\ref{ss_par}, it turns
out to be better to ignore the multiplicities.

The space $\P(\Gr_{k,t})$ of probability distributions on $\Gr_{k,t}$
is naturally viewed as a metric space, with the total variation
distance $\dTV$ between two distributions as the metric. In other words,
regarding $\P(\Gr_{k,t})$ as a subset of the unit ball of $\ell_1$ in $\RR^{\Gr_{k,t}}$, we simply
take the $\ell_1$-metric on this set. Let $\dH$ denote the Hausdorff
distance between compact subsets of $\P(\Gr_{k,t})$, defined
with respect to $\dTV$. Then we may define the {\em coloured
neighbourhood metric} $\dcn$ by
\[
 \dcn(G,G') = \sum_{k\ge 1}\sum_{t\ge 1} 2^{-k-t} \dH(\M_{k,t}(G),\M_{k,t}(G')),
\]
say. (As before, we can instead use any metric
on $\prod_{t,k}\P(\Gr_{k,t})$ giving rise to the product topology.)
If we restrict our attention to graphs with maximum degree at most
some constant $\Delta$, then the corresponding sets $\Gr_{k,t}$
are finite, so each $\P(\Gr_{k,t})$ is compact, and any sequence $(G_n)$
has a subsequence that is Cauchy with respect to $\dcn$, and in fact
converges to a limit point consisting of one compact subset of $\P(\Gr_{k,t})$
for each $k$, $t$. In fact, it is not hard to check that whenever
$(G_n)$ has bounded tree counts (i.e., contains $O(n)$ copies
of any fixed tree $T$), it has a convergent
subsequence with respect to $\dcn$.
Of course, as before, we can combine the limiting subsets of $\P(\Gr_{k,t})$
as $t$ varies. Also, as in Section~\ref{ss_par}, there may be circumstances
in which it is better to view $\M_{k,t}(G_n)$ as a multiset after all,
and use the matching distance between such multisets to define
a {\em coloured neighbourhood matching metric} $\dcnM$.

Taking just $k=1$ above, we recover the notion of local
limit. On the other hand, the set $\M_k$ used to define the
partition metric can be recovered from $\M_{k,1}$. (The latter set
codes, for each partition, how many vertices there are in each part,
and how many neighbours in each part each vertex has. From this
information one can calculate the number of edges between each pair
of parts.) It follows that if $(G_n)$ is Cauchy with respect
to $\dcn$, then it is Cauchy with respect to both $\dloc$ and $\dP$.
In other words, $\dcn$ is a `joint refinement' of $\dloc$ and $\dP$.

\subsection{Models for metrics}

The following rather vague question was posed in~\cite{BRsparse}.

\begin{question}\label{qmodel}
Given a metric $d$, can we find a `natural' family of random graph models with the following
two properties:
(i) for each model, the sequence of random graphs $(G_n)$ generated
by the model is
Cauchy with respect to $d$ with probability $1$, and
(ii)
for any sequence $(G_n)$ with $|G_n|=n$ that is Cauchy with respect
to $d$, there is a model from the family
such that, if we interleave $(G_n)$ with a sequence of random
graphs from the model, the resulting sequence is still Cauchy with
probability $1$.
\end{question}

As noted in~\cite{BRsparse}, for any of $\dc$, $\de$ or $\dP$
the answer is yes in the dense case, since $(G_n)$ is Cauchy
if and only if $\dc(G_n,\ka)\to 0$
for some kernel $\ka$, while the dense inhomogeneous random graphs
$G(n,\ka)=G_1(n,\ka)$ converge to $\ka$ in $\dc$ with probability $1$.
Thus our family consists of one model $G(n,\ka)$ for each 
kernel $\ka$ (to be precise, for each equivalence
class of kernels under the relation $\sim$ defined in \ssequiv).
In the sparse case, but with $np\to\infty$, some partial
answers are given in~\cite{BRsparse}, but the situation
is much more complicated.

Here, with $p=1/n$, $G_{1/n}(n,\ka)$ is very unsatisfactory
as a model for an {\em arbitrary} sequence of sparse graphs,
since it produces graphs with essentially no cycles. The following natural
model proposed by Bollob\'as, Janson and Riordan~\cite{BJRclust}
is rather more general.
In the uniform case, generalizing $G(n,c/n)$,
assign a weight $w_F$ to each fixed graph $F$.  To generate a random graph
with $n$ vertices, starting from the empty graph, for each $F$ add
each of the $\Theta(n^{|F|})$ possible copies of $F$ with
probability $w_F / n^{|F|-1}$, deleting any duplicate edges.  Note
that, on average, we add $\Theta(n)$ copies of each graph $F$.
The point is that this model produces graphs with $\Theta(n)$ edges,
but (in general) $\Theta(n)$ triangles, and indeed $\Theta(n)$ copies
of any fixed graph $F$.

In the general case, Bollob\'as, Janson and Riordan~\cite{BJRclust}
start from a {\em kernel family} $(\ka_F)$
consisting of one kernel $\ka_F$ for each isomorphism type of connected finite
graph $F$; the kernel $\ka_F$ is simply a measurable function on $[0,1]^{V(F)}$
that is symmetric under the action of the automorphism group of $F$.
To construct the random graph $G(n,(\ka_F))$, choose 
$x_1,\ldots,x_n$ independently and uniformly from $[0,1]$,
and then for each $F$ and each set ${v_1,\ldots,v_k}$ of $k=|F|$ vertices,
insert a copy of $F$ with vertex set $v_1,\ldots,v_k$ with probability
$\ka_F(x_{v_1},\ldots,x_{v_k})/n^{k-1}$. For full details, see~\cite{BJRclust}. 

While the model $G(n,(\ka_F))$ is much more general than $G_{1/n}(n,\ka)$,
it still has its limitations. It is not hard to see that the asymptotic
degree distribution of $G(n,(\ka_F))$ will be a mixture of compound Poisson
distributions (rather than the mixture of Poisson distributions 
one obtains for $G_{1/n}(n,\ka)$).
In particular, there is no kernel family for which $G(n,(\ka_F))$ produces
graphs in which (almost) all vertices have degree $3$, say.
A positive answer to Question~\ref{qmodel} for either of the metrics $\dloc$
or $\dcn$ 
would involve, among other things, models that produce graphs with
arbitrary given degree distributions (with finite expectation, say, or
perhaps bounded, to keep things simple). Of course this is easy, but such an answer
would require much more -- it would require producing all possible
distributions of local structure. Even for $\dloc$, this is likely
to be hard, since one would presumably have to first understand
the possible limiting distributions, which
would involve answering Question~\ref{qglim}. For $\dcn$, much more
is needed: one needs to understand the possible combinations of local and global
structure.

Let us give one very simple example of the kind of model we have in mind,
associated to $\dloc$ in the extremely sparse case.
Let parameters $n$ and $d$ be given; we shall think of $d$ fixed
as $n\to \infty$. Assuming that $nd$ is a multiple of three, let $T(n,d)$
be the random graph obtained as follows: start with $n$ vertices, each
of which has $d$ `stubs' associated to it. Take a uniformly random partition
of the set of $nd$ stubs into $nd/3$ triplets, and add a triangle corresponding
to each triplet, sitting on the vertices that the stubs in the triplet are associated to.
In general, $T(n,d)$ is a multigraph,
but it will be very close to a simple graph (in fact, $T(n,d)$ will be simple
with probability bounded away from $0$). The model $T(n,d)$ is
a natural `triangle version' of the random regular graph, generated
via the configuration model. (Since the first draft of this paper
was written, Newman~\cite{Newman_tri} has described a natural
inhomogeneous version of this model.)

It is not hard to see that in $T(n,d)$ every (or, if we make the graph simple, almost
every) vertex has degree $2d$, and is in exactly $d$ (edge-disjoint) triangles.
Furthermore, other than this, $T(n,d)$ has no local structure: the local
limit of the sequence $T(n,d)$ is an infinite tree of triangles.
Thus $T(n,d)$ is an appropriate model for certain Cauchy sequences in $\dloc$.
Of course one can construct many other models along these lines, but it is
hard to imagine that all Cauchy sequences can be covered in this way!

As we have seen, in the extremely sparse case, Question~\ref{qmodel}
is likely to be very hard to answer for the metrics we have considered.
Nonetheless, it may be possible to answer the same question for weaker metrics,
or to provide partial answers. Such partial answers would hopefully provide
great insight into the structure of the set of sparse graphs.

\medskip
\begin{ack}
We are grateful to G\'abor Elek for pointing out an error in an earlier version
of this manuscript, and for drawing our attention to the connections
to the theory of sofic groups.
\end{ack}

\end{document}